\newtheorem{theorem}{Theorem}[section]
\newtheorem{corollary}[theorem]{Corollary}
\newtheorem{proposition}[theorem]{Proposition}
\newtheorem{definition}[theorem]{Definition}
\newtheorem{lemma}[theorem]{Lemma}
\newtheorem{claim}[theorem]{Claim}
\newtheorem{remark}[theorem]{Remark}
\newtheorem*{theorem*}{Theorem}
\newtheorem*{proposition*}{Proposition}
\newtheorem*{definition*}{Definition}
\newtheorem*{lemma*}{Lemma}
\newtheorem*{claim*}{Claim}
\newtheorem*{corollary*}{Corollary}
\newtheorem*{convention*}{Convention}
\newtheorem{observation}[theorem]{Observation}
\theoremstyle{definition}
\newtheorem{example}[theorem]{Example}
\newtheorem{construction}[theorem]{Construction}
\theoremstyle{remark}
\newtheorem{rem}[theorem]{Remark}
\newtheorem*{rem*}{Remark}
\newtheorem{case}{Case}
\newcommand{\wt}[1]{\widetilde{#1}}
\newcommand\bR{\mathbb R}
\newcommand\bZ{\mathbb Z}
\newcommand{\R}{\mathbb R}
\newcommand\orb{ \mathcal O }
\newcommand{\cC}{\mathcal{C}}
\newcommand{\cF}{\mathcal{F}}
\newcommand{\cL}{\mathcal{L}}
\newcommand{\cP}{\mathcal{P}}
\newcommand{\cT}{\mathcal{T}}
\newcommand{\wP}{\widetilde{P}}
\newcommand{\fix}{\cP}
\newcounter{notes}
\title{Anosov flows with the same periodic orbits}
\author[Thomas Barthelm\'e]{Thomas Barthelm\'e}
\address{Queen's University, Kingston, Ontario}
\email{thomas.barthelme@queensu.ca}
\urladdr{sites.google.com/site/thomasbarthelme}
\author[Sergio Fenley]{Sergio Fenley}
 \address{Florida State University, Tallahassee, FL}
 \email{fenley@math.fsu.edu}
\author[Kathryn Mann]{Kathryn Mann}
 \address{Cornell University, Ithaca, NY}
 \email{k.mann@cornell.edu}
\urladdr{https://e.math.cornell.edu/people/mann}
\begin{document}

\begin{abstract}
In \cite{BFM}, it was proved that transitive pseudo-Anosov flows on any closed 3-manifold are determined up to orbit equivalence by the set of free homotopy classes represented by periodic orbits, provided their orbit space does not contain a feature
called a ``tree of scalloped regions."  In this article we describe what happens in these exceptional cases: we show what topological features in the manifold correspond to trees of scalloped regions, completely classify the flows which do have the same free homotopy data, and construct explicit examples of flows with the same free homotopy data that are not orbit equivalent.  
\end{abstract}

\maketitle

\section{Introduction}
This article concerns the classification of Anosov flows on 3-manifolds, a problem which dates back to Smale \cite{Smale}.  Since flows may always be reparameterized, the relevant notion of classification is up to {\em orbit} or {\em isotopy equivalence}.  Flows $\phi_1$ and $\phi_2$ on a manifold $M$ are orbit equivalent if there exists a homeomorphism of $M$ sending orbits of one to orbits of the other, and isotopically equivalent if the homeomorphism can be taken to be isotopic to the identity.\footnote{Here as in \cite{BFM}, we do not ask for the orbit equivalence to preserve the direction of orbits.}

The classification problem is particularly interesting on 3-manifolds, where we have both large families of examples and a rich structure theory.  
In \cite{BFM} it was shown that transitive (pseudo)-Anosov flows are ``essentially" classified by the set of free homotopy classes of their periodic orbits.   More specifically, \cite[Theorem 1.3]{BFM} gives a complete invariant of such flows up to orbit equivalence, which in many cases reduces to simply knowing this free homotopy data. 
In the present work, we make the notion of ``essentially classified by" precise, describing exactly which Anosov flows on which manifolds have the same free homotopy classes of periodic orbits, and which topological feature in $M$ is responsible for the additional invariant beyond periodic orbits needed in \cite{BFM}, thus completing the classification program started there.  

For an Anosov flow $\phi$ on a closed three-manifold $M$, let $\fix(\phi)$ denote the set of unoriented free homotopy classes of periodic orbits, that is 
\[ \fix(\phi) = \{ [\gamma] : \gamma \text{ or } \gamma^{-1} \text{ is freely homotopic to a periodic orbit} \} \]
where $[\gamma]$ denotes a conjugacy class in $\pi_1(M)$.   
We show the following. 
\begin{theorem}[Classifying Anosov flows by periodic orbits]
\label{thm_classification}
Let $\phi_1$, $\phi_2$ be Anosov flows on a closed manifold $M$, at least one of which is transitive. If $\fix(\phi_1)=\fix(\phi_2)$, then both flows are transitive, and there exists a finite (possibly empty) collection of Seifert pieces 
of the JSJ decomposition of $M$ which are {\em scalloped} and {\em periodic} for both $\phi_1$ and $\phi_2$ and such that (up to isotopy equivalence) $\phi_2$ is obtained from $\phi_1$ by a {\em periodic Seifert flip} on each piece.   

Conversely, any Anosov flow $\psi$ obtained from an Anosov flow $\phi$ by applying periodic Seifert flips to scalloped periodic Seifert pieces satisfies $\fix(\phi) = \fix(\psi)$.  
\end{theorem}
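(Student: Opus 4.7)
The strategy is to bootstrap from \cite[Theorem 1.3]{BFM}, which provides a complete orbit-equivalence invariant for transitive (pseudo)-Anosov flows that refines $\fix(\phi)$ and coincides with it except when the orbit space of $\wt\phi$ contains a \emph{tree of scalloped regions}. The heart of Theorem \ref{thm_classification} is therefore to (a) describe the topological feature of $M$ that corresponds to a tree of scalloped regions, and (b) describe exactly the residual ambiguity in reconstructing the flow from $\fix(\phi)$ when such a feature is present.

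For (a), I would analyze the $\pi_1(M)$-action on a tree of scalloped regions in the orbit space: one expects its stabilizer to be a surface-group-like subgroup acting cocompactly, whose quotient in $M$ is a Seifert-fibered piece of the JSJ decomposition---this is the \emph{scalloped} Seifert piece in the statement, and \emph{periodic} should mean its regular fiber is freely homotopic to a periodic orbit of $\phi$. Concretely, the boundaries between adjacent scalloped regions should descend to embedded tori in $M$, hence JSJ tori, while the product-like structure of the stable/unstable foliations over a scalloped region forces the piece bounded by these tori to be Seifert fibered with the claimed property. Conversely, I would show that no other JSJ piece can produce this orbit-space feature, giving a bijection between the two notions.

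For (b), the \emph{periodic Seifert flip} should be defined as a specific modification of $\phi$ supported on a scalloped periodic Seifert piece that swaps the roles of the two invariant foliations (equivalently, reverses a natural orientation on the base orbifold of the Seifert fibration), while leaving the flow on the complement unchanged up to isotopy. I would then verify two properties: first, that the flip preserves $\fix(\phi)$ because periodic orbits inside the piece are parameterized by the regular fiber and the flip acts trivially on their free-homotopy classes---this gives the converse direction of the theorem; second, that the flip acts non-trivially on the finer invariant of \cite{BFM} by permuting the regions in the associated tree of scalloped regions in a controlled way. Combined with \cite[Theorem 1.3]{BFM}, this forces any two flows with the same $\fix$ to differ by periodic Seifert flips on exactly the pieces identified in (a), establishing the forward direction.

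The remaining transitivity claim follows from the structural restrictions on non-transitive Anosov flows on $3$-manifolds, whose basic sets impose constraints on $\fix(\phi)$ that are incompatible with sharing it with a transitive flow (for instance, attracting/repelling tori prevent certain free homotopy classes from being represented). The main obstacle I anticipate is step (a): producing a precise dictionary between the orbit-space notion of a tree of scalloped regions and the manifold notion of a scalloped periodic Seifert piece, and showing the correspondence is bijective. This is where the interplay between the orbit-space analysis, the geometry of the invariant foliations restricted to Seifert pieces, and JSJ theory must all be aligned carefully, and it is likely to be the technical bulk of the paper.
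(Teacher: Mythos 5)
Your high-level scaffolding matches the paper's strategy: bootstrap from \cite[Theorem 1.3]{BFM}, establish a dictionary between trees of scalloped regions in $\orb_\phi$ and scalloped periodic Seifert pieces of $M$, define a ``flip'' operation supported on such a piece, and argue it preserves $\fix$ while changing the residual invariant. However, there is a genuine misconception in your description of the flip. You describe it as an operation that ``swaps the roles of the two invariant foliations (equivalently, reverses a natural orientation on the base orbifold).'' That is not what a periodic Seifert flip does. The flip \emph{reverses the direction of each periodic orbit in the spine of the piece} --- i.e., reverses the Seifert \emph{fiber} orientation on the trajectories forming the spine --- while keeping the stable/unstable foliations and the Birkhoff-annulus structure fixed. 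Swapping stable and unstable is instead related to time-reversal of the flow, and the paper goes out of its way to distinguish these: reversing time equals a flip composed with an additional reflection. If you build your construction on the ``swap foliations'' operation you will not produce a flow with the same $\fix$; you will instead modify the direction of all orbits crossing the piece, breaking the correspondence of free homotopy classes.

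Two further gaps are worth flagging. First, your argument for $\fix$-invariance under the flip considers only ``periodic orbits inside the piece,'' but these are exactly the easy case (they are freely homotopic to the fiber and the flip reverses their direction, which doesn't change an unoriented class). The real work is periodic orbits that \emph{cross} $P$; the paper handles these via an ordering argument on lifts of scalloped cutting surfaces in $\wt{M}$ (showing that the sequence of crossed surfaces, and hence the existence of a $g$-translated orbit, is unaffected by the flip). Second, the residual invariant from \cite{BFM} is not a ``permutation of the regions'' but a binary \emph{sign} on each tree of scalloped regions, recording whether a certain canonical map between orbit spaces respects or reverses the $g$-dynamics on shared lozenge sides; the key lemma is that a flip changes this sign, and nothing is permuted. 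Finally, you do not address existence and uniqueness of the flip itself; existence requires nontrivial gluing theory (the paper uses \cite{BBY}, after first showing each periodic Seifert piece is orbit equivalent to an explicit model built from blocks), and uniqueness is deduced a posteriori from \cite[Theorem 1.3]{BFM} once invariance of $\fix$ and the sign change are established.
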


A {\em scalloped periodic Seifert piece} for a flow $\phi$ on $M$ is a piece $P$ of the JSJ decomposition of $M$ such that the free homotopy classes of a regular fiber is represented by a periodic orbit of $\phi$ {\em and} for each boundary surface of $P$ there is an independent element (not a power of the fiber) of its fundamental group represented by a periodic orbit of $\phi$.   Very roughly, a {\em periodic Seifert flip} is a cut-and-paste construction that reverses the direction of the flow along the periodic orbits representing the fiber in a periodic Seifert piece.  We describe this construction in detail, show that in the transitive case it gives a unique flow up to isotopy equivalence (Theorem \ref{thm:flip_unique}), and show that the construction may be applied to any periodic Seifert piece of an Anosov flow (Theorem \ref{thm:existence}).  The precise definition of periodic Seifert flip uses the framework of {\em spines} from \cite{BarbFen_pA_toroidal}, we defer this to Section \ref{sec:spine} -- see Definition \ref{def:flip}.  A consequence of our work (together with \cite{BFM}) is that applying a flip to the same piece twice results in a flow isotopically equivalent to the original, hence the name ``flip".  

\begin{rem}
Though stated in terms of isotopy equivalence, Theorem \ref{thm_classification} has a counterpart classification of flows up to orbit equivalence.  
In general, since 3-manifolds admitting Anosov flows are $K(\pi, 1)$ spaces, any automorphism of $\pi_1(M)$ is realized by a diffeomorphism.  Thus, one has  
$\fix(\phi_1)=\theta\fix(\phi_2)$ for some map $\theta$ on free homotopy classes induced by an automorphism of $\pi_1(M)$ if and only if there exists a diffeomorphism $\Theta$ 
of $M$ such that $\fix(\phi_1)=\fix(\Theta\circ\phi_2\circ \Theta^{-1})$; and $\Theta$ can be taken to induce $\theta$.  
Thus, by composing with automorphisms of $\pi_1(M)$, Theorem \ref{thm_classification} classifies Anosov flows up to orbit equivalence. 
\end{rem}

The terminology ``scalloped" comes from the feature of the orbit space of a flow called a {\em tree of scalloped regions}, which is the feature responsible for the additional invariants beyond $\fix(\phi)$ given in \cite{BFM}.  (See Section \ref{sec:orbit_space} for precise definitions.)  A second goal of this work is to show that these regions in the orbit space correspond to scalloped periodic Seifert pieces of $M$: 

\begin{theorem} \label{thm_scalloped_tree_iff_scalloped_piece}
The orbit space of an Anosov flow 
$\phi$ contains a tree of scalloped regions if and only if some Seifert piece $P$ of the JSJ decomposition of $M$ has both its fiber, and an independent element of the fundamental group of each boundary component represented by periodic orbits of $\phi$.   
Moreover, each such piece $P$ corresponds to a unique (up to the action of $\pi_1(M)$) tree of scalloped regions.
\end{theorem}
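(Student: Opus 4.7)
The plan is to handle the two directions of the equivalence separately, via the dictionary between $\bZ^2$ subgroups stabilizing scalloped regions on one hand, and Seifert pieces of $M$ on the other, then to read off the uniqueness statement from this correspondence.

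For the forward direction, I would assume $P$ is a Seifert piece whose regular fiber $f$ and, for each boundary torus $T_i$, an independent element $b_i \in \pi_1(T_i)$, are all realized by periodic orbits of $\phi$. Fix a lift $\wP \subset \widetilde M$ stabilized by a chosen conjugate of $\pi_1(P)$. For each $i$, the subgroup $\langle f, b_i \rangle \cong \bZ^2$ acts on the orbit space $\orb$ with both generators fixing axes; the standard orbit-space theory for (pseudo-)Anosov flows in dimension three used in \cite{BFM} produces a scalloped region $R_i$ stabilized by this $\bZ^2$, whose distinguished central axis is the lift of the periodic orbit representing $f$. Adjacent boundary tori of $\wP$ share this fiber axis, so the translates $g \cdot R_i$ for $g \in \pi_1(P)$ are arranged in $\orb$ according to the dual tree of boundary components of $\wP$ in $\widetilde M$, producing a tree of scalloped regions as required.

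For the converse, I would begin with a tree of scalloped regions $\cT$ in $\orb$ and let $H \leq \pi_1(M)$ be its stabilizer. Each individual scalloped region in $\cT$ has $\bZ^2$ stabilizer, and any two such $\bZ^2$'s lying in $\cT$ share an infinite cyclic subgroup corresponding to a common axis; thus $H$ contains a nontrivial cyclic center $\langle f \rangle$, and $H / \langle f \rangle$ acts on the combinatorial tree underlying $\cT$ with cyclic vertex and edge stabilizers. Bass--Serre theory then identifies $H$ as the fundamental group of a Seifert fibered space with Seifert fiber $f$, and combining the JSJ decomposition of $M$ with the characterization of periodic Seifert pieces via spines from \cite{BarbFen_pA_toroidal} forces $H$ to be conjugate to $\pi_1(P)$ for some Seifert piece $P$. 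By construction, $f$ is realized by a periodic orbit, and each boundary torus of $P$ corresponds to an $H$-orbit of maximal scalloped regions in $\cT$, whose non-central $\bZ^2$-generator provides the required independent periodic orbit class on that boundary. Uniqueness is then automatic: $\cT$ determines $H$ and hence $P$ up to conjugacy, while two trees with conjugate stabilizers differ by a translation in $\pi_1(M)$.

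The hard part will be pinning down $H$ as \emph{exactly} the fundamental group of a Seifert piece: a priori $H$ might be a proper subgroup of some $\pi_1(P)$ (missing boundary information), or it might overshoot a JSJ torus and meet two adjacent pieces. Excluding these possibilities requires a careful study of how periodic orbits and their axes in $\widetilde M$ interact with scalloped regions, using the spine framework of \cite{BarbFen_pA_toroidal}. A secondary technical point I expect to address is verifying that the central elements of adjacent scalloped regions in $\cT$ can be chosen to coincide (rather than only being commensurable powers), which is what allows the single element $f$ to serve as a regular fiber of $P$.
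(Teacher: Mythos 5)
Your two-direction strategy parallels the paper's, but both directions have concrete gaps that the paper's proof closes with arguments you have not supplied.

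In the direction from a scalloped periodic Seifert piece $P$ to a tree of scalloped regions, you construct a scalloped region $R_i$ for each $\langle f, b_i\rangle \cong \bZ^2$ and claim the $\pi_1(P)$-translates assemble into a tree of scalloped regions. The difficulty you have not addressed is verifying the defining property of a tree of scalloped regions: \emph{every} lozenge in the resulting chain must share \emph{each of its four sides} with another lozenge. A priori the maximal $f$-invariant chain $\cC$ could have corners where some quadrant is empty. The paper closes this via a crucial intermediate Claim: every corner of $\cC$ lies in the \emph{interior} of $\wt P$. This is proven by a separation argument using the fact that boundary surfaces of a good representative of $P$ are transverse to the flow, so a boundary plane $Y$ cannot separate two corners of a lozenge of $\cC$. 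Given the Claim, one then shows each quadrant of each corner contains a lozenge by observing that a nearby orbit must exit $\wt P$ through some scalloped boundary plane, whose projection to the orbit space is a $g$-invariant scalloped region, which supplies the needed lozenge. Your argument skips this entirely, and your phrase ``distinguished central axis'' of a scalloped region does not correspond to anything in the structure theory: scalloped regions have families of non-separated boundary leaves fixed by $f$, not a central fixed orbit.

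In the direction from a tree $T$ to a Seifert piece, you propose to recover the Seifert structure on the stabilizer $H$ of $T$ via Bass--Serre theory, and you candidly flag that pinning down $H$ as exactly $\pi_1(P)$ is the hard part. The paper sidesteps this entirely: it takes a minimal $g$ fixing all corners of $T$ (guaranteed by the fact that $T$ is a maximal chain fixed by $g$, cf.\ Lemma \ref{lem:tree_fix_corners}) and invokes \cite[Proposition 1.2]{BFM}, which identifies the \emph{centralizer} $C(g)$ as (conjugate to) the fundamental group of a Seifert piece $P$ with $g$ the fiber. This is the key black box you are missing; your Bass--Serre approach would have to independently rule out that $H$ is a proper subgroup of $\pi_1(P)$, or that it straddles a JSJ torus, which is exactly where you acknowledged trouble. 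Moreover, to conclude that each boundary surface of $P$ is scalloped, the paper does not argue group-theoretically: it takes a lozenge $L$ in the $\pi_1(S)$-invariant chain $\cC \subset T$, uses Lemma \ref{lem:tree_scalloped} to find the two scalloped regions $U_1, U_2 \subset T$ containing $L$, invokes Lemma \ref{lem:scalloped_tori_are_JSJ} and Proposition \ref{lem:Z2_invariant_chains} to identify each $U_i$ with a cutting surface $S_i$, and then argues by tracking orbits through $L$ that $S$ must equal one of the $S_i$. Your proposal has no analogue of this boundary analysis.
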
 

Finally, we also prove that $\fix(\phi)$ alone is not a complete invariant.   Though not explicitly stated there, from \cite{BFM} one can obtain an upper bound (depending on the topology of $M$) on the number of Anosov flows on $M$ with the same set of free homotopy classes of periodic orbits.  Here, we prove this bound is sharp.  
\begin{corollary}\label{cor_sharpupperbound}
Given a manifold $M$ with $k$ distinct Seifert pieces, there are at most $2^k$ non-orbit equivalent transitive Anosov flows with a given set of free homotopy classes represented by periodic orbits.

Conversely, for every $k \in \mathbb{N}$ there are examples of manifolds $M$ with exactly $k$ distinct Seifert pieces which support $2^k$ non-orbit equivalent transitive Anosov flows with identical sets of free homotopy classes of periodic orbits.
\end{corollary}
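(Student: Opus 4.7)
The upper bound is essentially immediate from Theorem \ref{thm_classification}. Fix any transitive Anosov flow $\phi$ on $M$; by the theorem, every transitive Anosov flow $\psi$ with $\fix(\psi)=\fix(\phi)$ is, up to isotopy equivalence, obtained from $\phi$ by applying a periodic Seifert flip to some subset of the scalloped periodic Seifert pieces of $M$. Since there are at most $k$ such pieces, this gives at most $2^k$ subsets, and combined with the observation that applying the same flip twice returns the original flow (mentioned after Theorem \ref{thm_classification}), we obtain at most $2^k$ isotopy, hence orbit, equivalence classes of such flows.

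For sharpness, the plan has two ingredients. First, for each $k \geq 1$ construct a closed 3-manifold $M_k$ whose JSJ decomposition has exactly $k$ Seifert pieces, together with a transitive Anosov flow $\phi$ on $M_k$ for which every Seifert piece is scalloped and periodic. The natural way is to glue $k$ \emph{scalloped building blocks} along boundary tori: each block is modeled on a known scalloped Anosov flow on a Seifert-fibered piece (such as those of Bonatti--Langevin or the blocks used in \cite{BarbFen_pA_toroidal}), and the gluings are performed following the Handel--Thurston / B\'eguin--Bonatti--Yu framework so that (i) the resulting flow remains Anosov and transitive, (ii) each gluing torus persists in the JSJ decomposition, and (iii) on every piece both the regular fiber and an independent element of each boundary component are represented by periodic orbits. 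Theorem \ref{thm:existence} then allows us to apply a periodic Seifert flip to any subset $S\subseteq\{P_1,\dots,P_k\}$ of the pieces, yielding $2^k$ Anosov flows $\phi_S$ on $M_k$, each satisfying $\fix(\phi_S)=\fix(\phi)$ by the converse direction of Theorem \ref{thm_classification}.

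The main obstacle is the final step: showing that the $\phi_S$ are pairwise non-orbit-equivalent. Any orbit equivalence between two of them is realized by a self-homeomorphism $\Theta$ of $M_k$, which preserves the JSJ decomposition and therefore permutes the $k$ Seifert pieces. The strategy is to build $M_k$ so that its pieces are combinatorially distinguishable --- for instance by choosing the building blocks to have pairwise distinct Seifert invariants and by arranging the dual graph of the JSJ decomposition to be rigid --- forcing $\Theta$ to fix each $P_i$ setwise. It then suffices to show that on a single scalloped periodic piece $P_i$, the flipped and unflipped restrictions are not orbit equivalent via a piece-preserving homeomorphism whenever $i\in S\triangle S'$. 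This is precisely where the uniqueness statement Theorem \ref{thm:flip_unique} and the local description of the flip (Definition \ref{def:flip}) intervene: the flip reverses the direction of the fiber-representing periodic orbit of $P_i$, and this reversal is detectable from the induced action on the orbit space (equivalently, from the transverse orientation of the scalloped region associated to $P_i$ by Theorem \ref{thm_scalloped_tree_iff_scalloped_piece}). Combining these three steps produces the desired $2^k$ pairwise non-orbit-equivalent flows, matching the upper bound.
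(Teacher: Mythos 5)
The upper bound argument is correct and agrees with the paper's (implicit) reasoning: it follows directly from Theorem \ref{thm_classification} together with the observation that flipping twice returns the original flow.

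For sharpness, your overall plan---glue $k$ pairwise distinguishable scalloped periodic Seifert pieces, apply flips to subsets, argue the resulting $2^k$ flows are inequivalent---is the right skeleton, but the crucial final step has a genuine gap. You propose to reduce the problem to showing that, on a single scalloped piece $P_i$, the flipped and unflipped restrictions are not orbit equivalent via any piece-preserving homeomorphism, arguing that the reversal of the fiber-orbit direction is detectable. This reduction fails: the paper's Example \ref{ex:seif_flip_flow} exhibits a Seifert piece with a symmetry $\sigma$ that is an orbit equivalence between the model flow and its flip. So a piece-preserving homeomorphism can, in general, undo a flip, and since the paper's convention allows orbit equivalences to reverse the direction of orbits, the fiber direction by itself is not an invariant you can appeal to.

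The paper's actual argument is more delicate and global. Each fatgraph $X_i$ is constructed to contain a vertex $v_i$ fixed by \emph{every} fatgraph automorphism (Figure \ref{fig:special_graph}); this forces any orbit equivalence $f$ to preserve the corresponding orbit $o_i$ and the designated boundary tori $T_i, T_i'$ meeting its stable and unstable annuli. Fixing $T_j, T_j'$ for $j\neq i$ together with the fact that $f$ preserves the fiber direction on $P_j$ forces $f$ to be orientation-preserving on $P_j$, while fixing $T_i, T_i'$ and reversing the fiber direction on $P_i$ forces $f$ to be orientation-reversing on $P_i$. Since $M$ is built to be orientable, no homeomorphism can do both, giving the contradiction. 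Thus the inequivalence is not a local fact about a single piece but a global parity-of-orientation constraint; without both the special-vertex device and the orientability argument, your proposal does not close.
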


One nuance in this result is that, while performing a Seifert flip always results in a flow which is not isotopy equivalent to the original flow (see Proposition \ref{prop:not_isotopy_eq}), showing that the resulting flow is not {\em orbit} equivalent to the original is not straightforward.  In fact, there exist examples where, due to some hidden symmetry, applying a periodic Seifert flip to a flow $\phi$ results in a flow which {\em is} orbit equivalent to $\phi$.  See Example \ref{ex:seif_flip_flow}.

\subsection{Pseudo-Anosov and non-transitive flows}\label{sec_pseudo_Anosov_case}
We have chosen to limit the scope of much of this article to Anosov flows, instead of pseudo-Anosov flows as was treated in \cite{BFM}.  We do this for only one reason: in order to build a flip on a scalloped periodic piece of an Anosov flow, we use the gluing theorem of \cite{BBY}. While their theorem is likely to generalize to the pseudo-Anosov setting, such a generalization is not immediate.  As it is beyond the scope of this article to provide such a generalization, we instead restrict ourselves to Anosov flows.  Once a generalization of \cite{BBY} for pseudo-Anosov flows is known, all the results of the present article will automatically extend to that setting.

Nevertheless, several of the proofs we do give apply already to pseudo-Anosov flows.  
For the convenience of the reader, we state here the most general versions of the results already obtained.  

The definitions of scalloped periodic Seifert pieces and flips make sense for pseudo-Anosov flows, and we use this level of generality throughout 
Section \ref{sec:orbit_space}.  
Theorem \ref{thm_scalloped_tree_iff_scalloped_piece} holds for pseudo-Anosov flows (transitive or not); this is the statement of Theorem \ref{thm:tree_iff_periodic} proved in Section \ref{sec_scalloped_periodic}.  

The proof that two periodic flips of each other have the same free homotopy data holds for pseudo-Anosov flows with no changes (see Proposition \ref{prop_same_spectra}), and that proof does not require the flows to be transitive either. As explained above, it is for the proof of the existence of the flip that we use \cite{BBY}. However, for \emph{totally periodic pseudo-Anosov flows}, which are pseudo-Anosov flows on graph-manifolds such that each of the Seifert pieces has its fiber direction represented by a periodic orbit, the work of Barbot and Fenley \cite{BarbFen_pA_toroidal,BF_totally_per} allows one to construct a flip explicitly on any piece without using \cite{BBY}. In particular, in this setting we do not need the transitivity assumption either -- we use transitivity in the proof of Proposition \ref{prop_existence_of_flip} in order to be able to easily apply the statement of \cite[Theorem 1.5]{BBY}, but see Remark \ref{rem:non_transitive_BBY}.  

Thus, we have the following existence result
\begin{theorem} If $\phi$ is either an Anosov flow or a totally periodic pseudo-Anosov flow (not necessarily transitive) with a scalloped periodic Seifert piece $P$, then there exists an Anosov, or pseudo-Anosov, flow $\psi$ obtained from $\phi$ by a periodic Seifert flip on $P$.
\end{theorem}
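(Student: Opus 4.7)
The construction of $\psi$ from $\phi$ proceeds by cutting $M$ along the boundary of a neighborhood of the spine of the scalloped periodic piece $P$, reversing the direction of the flow along the fiber-representing periodic orbits inside $P$, and regluing. I would split the argument into two cases according to the hypothesis on $\phi$.

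For Anosov flows, the key input is the gluing theorem of B\'eguin-Bonatti-Yu \cite[Theorem 1.5]{BBY}. Starting from the spine framework of Section \ref{sec:spine}, I would view $M$ as the union of $P$ and its complement $M\setminus P$, each carrying a hyperbolic plug structure inherited from $\phi$, and reverse the flow on $P$ in accordance with Definition \ref{def:flip}. The scalloped periodic hypothesis guarantees that the weak stable and unstable laminations of $\phi$ on each boundary torus of $P$ are transverse to the Seifert fiber and to an independent periodic direction, and these two directions are precisely what is preserved (up to reversing orientation along the fiber) by the flip. Thus the boundary foliations on the flipped $P$ remain quasi-transverse to those induced by $\phi$ on $M\setminus P$, so the hypotheses of \cite{BBY} are satisfied and regluing yields an Anosov flow $\psi$ on $M$. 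Transitivity of $\phi$ is used here only to quote \cite[Theorem 1.5]{BBY} off the shelf; in the non-transitive case one argues as in Remark \ref{rem:non_transitive_BBY}, applying the gluing only in a collar neighborhood of $\partial P$ where the flow is automatically well-behaved.

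For totally periodic pseudo-Anosov flows, $M$ is a graph manifold and, by \cite{BarbFen_pA_toroidal, BF_totally_per}, the restriction of $\phi$ to each Seifert piece admits an explicit model built from a linear pseudo-Anosov map suspended along the fiber direction. In this setting the flip is performed by hand: on $P$ we replace the model by its flipped counterpart, where the flow along the fiber orbits has reversed orientation. The boundary data on $\partial P$ is unchanged as a pair of transverse foliations, so the replacement glues directly to the unchanged flow on $M\setminus P$, producing a pseudo-Anosov flow $\psi$ with no appeal to \cite{BBY}.

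The main obstacle I expect is the boundary compatibility step in the Anosov case: one must verify that, after the flip, the induced weak foliations on each component of $\partial P$ still admit the kind of Markovian structure required by \cite{BBY}, and that the resulting flow is globally Anosov rather than merely Anosov on each piece. The scalloped periodic hypothesis is tailored exactly so that the two distinguished periodic directions (the fiber and the independent boundary class) survive the flip and pin down the gluing, but translating this into the precise verification of the \cite{BBY} hypotheses, together with the localization needed to drop transitivity, is where the technical work lies.
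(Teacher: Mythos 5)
Your overall strategy matches the paper's: flip the model flow on $P$ and reglue to $\phi|_{M\setminus P}$, citing \cite{BBY} in the Anosov case and the explicit models of \cite{BarbFen_pA_toroidal,BF_totally_per} in the totally periodic pseudo-Anosov case. But there is a substantive gap you wave at without resolving, and a second claim that is not quite right.

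The gap: you assert that the hypotheses of \cite[Theorem 1.5]{BBY} are satisfied once the boundary laminations are ``quasi-transverse,'' and you flag the Markovian/boundary structure as the main obstacle. In fact, as the paper points out explicitly, the obstruction is that the boundary laminations of the pieces $(P,\phi_P^\pm)$ and $(M\setminus P, \phi)$ are \emph{not filling} in general, so \cite[Theorem 1.5]{BBY} cannot be applied directly to the decomposition $M = P \cup (M\setminus P)$. The paper's Proposition \ref{prop_existence_of_flip} circumvents this by a careful sequencing: first glue the components of $\partial P$ that are identified to each other (producing a hyperbolic plug $\bar P$ via Lemma \ref{lem_partial_gluing_periodic_piece}), then glue $\partial_{\mathrm{out}}Q$ to $\partial_{\mathrm{in}}\bar P$ using only \cite[Proposition 1.1]{BBY} (which needs no filling), and only on the final self-gluing of the resulting plug $R$ verify that the relevant boundary lamination \emph{is} filling, using that each complementary component in $\partial_{\mathrm{out}}R$ can be flowed back through a scalloped cutting surface and hence is a strip. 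You also implicitly treat $M\setminus P$ and $P$ as two separate pieces glued along their full boundary, overlooking that boundary components of $P$ may be glued to each other; handling this is precisely why the decomposition into $\partial_{\mathrm{in}}^P P$, $\partial_{\mathrm{out}}^P P$, $\partial_{\mathrm{in}}^Q P$, $\partial_{\mathrm{out}}^Q P$ and the preliminary self-gluing of $P$ is needed.

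A secondary issue: your explanation of how to drop transitivity (``applying the gluing only in a collar neighborhood of $\partial P$ where the flow is automatically well-behaved'') is not the reason given in Remark \ref{rem:non_transitive_BBY}, and I don't see how it would work. Transitivity enters only to ensure the plug is a \emph{saddle} (no attracting or repelling basic sets) so that \cite[Theorem 1.5]{BBY} applies verbatim; the correct fix in the non-transitive case is that the saddle hypothesis is not actually necessary for the gluing theorem, which is the content of \cite{Paulet}. Finally, the phrase ``reversing the direction of the flow along the fiber-representing periodic orbits inside $P$'' is not an operation one can perform locally on a flow; it becomes well-defined only after Proposition \ref{prop:nice_gluing} puts $(P,\phi|_P)$ into the explicit model coordinates (glued copies of $(N,\psi^+)$), at which point the flip is the replacement of $\psi^+$ by $\psi^-$ in each block. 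That construction is the backbone of the existence proof and should be stated, not assumed.
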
 
We also have the following generalization of (each direction of) Theorem \ref{thm_classification}. 

\begin{theorem}
Let $\phi$ and $\psi$ be two (not necessarily transitive) pseudo-Anosov flows that are periodic flips of each other.  Then $\fix(\phi)=\fix(\psi)$.  
\end{theorem}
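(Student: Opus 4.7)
The plan is to reduce to the case where $\psi$ is obtained from $\phi$ by a single periodic Seifert flip on one scalloped periodic Seifert piece $P \subset M$ (the general case follows by iteration, since flips on distinct pieces are independent). I would then identify periodic orbits of $\phi$ and $\psi$ piece by piece.

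First, I would use that the flip is a local cut-and-paste construction, modifying the flow only on a neighborhood of $P$. Outside this neighborhood, $\psi$ agrees with $\phi$ up to a diffeomorphism of $M \setminus \mathrm{int}(P)$ that preserves the inclusion $M \setminus P \hookrightarrow M$ up to isotopy. Consequently, every periodic orbit of $\phi$ not meeting $P$ determines a periodic orbit of $\psi$ in the same (oriented) free homotopy class in $\pi_1(M)$, and vice versa. This takes care of all orbits disjoint from $P$.

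Next, for the periodic orbits inside $P$ (and on $\partial P$), I would invoke the spine framework of \cite{BarbFen_pA_toroidal} as used in Definition \ref{def:flip}. By construction, a periodic Seifert flip preserves the Seifert structure on $P$ and, at the level of periodic orbits, has the sole effect of reversing the flow direction along the periodic fiber orbits. All other periodic orbits meeting $P$ — in particular, the "independent element" orbits on $\partial P$ whose existence defines the scalloped periodic condition — correspond bijectively to those of $\phi$ without change of orientation. Thus the fiber orbits of $\psi$ represent the inverse conjugacy classes in $\pi_1(M)$ of the corresponding fiber orbits of $\phi$, while other orbits are identified on the nose. Since $\fix$ identifies $[\gamma]$ with $[\gamma^{-1}]$ by definition, the contribution of $P$ to $\fix(\psi)$ coincides with its contribution to $\fix(\phi)$. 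Combined with the previous step, this gives $\fix(\phi) = \fix(\psi)$.

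The main obstacle is verifying the claimed correspondence of periodic orbits meeting $P$: specifically, that aside from the orientation reversal of the fiber direction, the flip preserves the unoriented conjugacy classes of \emph{all} other periodic orbits in $P$, including those lying on the boundary tori. This requires a careful reading of the spine construction and the gluing data, checking that the identifications along $\partial P$ match up so that the combinatorial structure of periodic orbits (as fixed conjugacy classes in $\pi_1(P) \hookrightarrow \pi_1(M)$) is preserved. Once this bijection is in hand, the equality of $\fix$ follows immediately from the fact that orientation reversal is invisible to the unoriented invariant $\fix$.
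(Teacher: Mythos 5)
Your proposal identifies the right decomposition (orbits disjoint from $P$, orbits contained in $P$, orbits crossing $P$), but it has a genuine gap exactly where you flag the ``main obstacle'': you never actually establish a correspondence for periodic orbits that \emph{cross} $P$, and the correspondence you conjecture (a bijection between orbits of $\phi$ and of $\psi$ ``without change of orientation'') is not what happens. After a flip, the orbits of $\phi$ and $\psi$ passing through $P$ are generally different curves; what is preserved is only the set of conjugacy classes they realize. Moreover, the scalloped hypothesis puts the boundary tori of $P$ transverse to both flows, so there are no periodic orbits ``on $\partial P$'' at all; the ``independent elements'' are conjugacy classes represented by orbits that go through $P$ and close up after possibly re-entering $P$ or other pieces many times. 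Such an orbit is not a local object attached to $P$, and a reading of the spine/gluing data alone does not let you track it.

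The paper's resolution is a global argument in the orbit space. It introduces, for each flow, a relation $<_\phi$ on the set of lifts to $\widetilde M$ of all scalloped cutting surfaces, defined by saying $Z_i <_\phi Z_j$ if an oriented orbit runs from $Z_i$ to $Z_j$, and proves this relation is transitive (Lemma~\ref{lem:order_crossing_surfaces}) using the product structure of scalloped regions. Given $g$ represented by a $\psi$-periodic orbit crossing $P$, one records the sequence of lifted cutting surfaces $Z_1 <_\psi \dots <_\psi Z_n = g(Z_1)$ crossed by a lift of that orbit; each consecutive pair is linked by an orbit \emph{segment} either in $M\setminus P$ (where $\phi$ and $\psi$ agree up to isotopy) or in $P$ (where Remark~\ref{rem_orbits_between_boundary} gives a matching segment for the flipped model). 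Transitivity then yields $Z_1 <_\phi g(Z_1)$, which translates in $\orb_\phi$ to $U_1 \cap gU_1 \neq \emptyset$ and forces a fixed point of $g$ via a contraction argument. This order-and-chaining step is the missing idea in your write-up: without it you cannot conclude that $g$ is realized by a $\phi$-periodic orbit, because the local pieces you glue together do not by themselves produce a closed orbit of $\phi$ in the class of $g$.
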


\begin{theorem}
Let $\phi_1$, $\phi_2$ be either Anosov flows on $M$, or totally periodic pseudo-Anosov flows, at least one of which is transitive. If $\fix(\phi_1)=\fix(\phi_2)$ then $\phi_1$ and $\phi_2$ are isotopically equivalent or obtained from one another by periodic Seifert flips as in Theorem \ref{thm_classification}. 
\end{theorem}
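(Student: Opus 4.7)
The plan is to adapt the proof of Theorem \ref{thm_classification} to the more general setting where the flows may be totally periodic pseudo-Anosov. The argument uses three ingredients available at this level of generality: the classification of transitive pseudo-Anosov flows from \cite{BFM}, the correspondence between trees of scalloped regions and scalloped periodic Seifert pieces from Theorem \ref{thm_scalloped_tree_iff_scalloped_piece}, and the existence and uniqueness of periodic Seifert flips---uniqueness being Theorem \ref{thm:flip_unique} and existence in the totally periodic setting coming from \cite{BarbFen_pA_toroidal, BF_totally_per} rather than from \cite{BBY}.

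I would first verify that both flows are transitive, mirroring the corresponding step in the proof of Theorem \ref{thm_classification}: under the hypothesis that $\phi_1$ is transitive and $\fix(\phi_1) = \fix(\phi_2)$, a non-transitive $\phi_2$ would produce distinguishing features in its periodic orbit spectrum (basic sets, attractor-repeller structure, etc.). Once both flows are known to be transitive, applying \cite{BFM} identifies their common orbit space canonically up to a choice of twist at each tree of scalloped regions. By Theorem \ref{thm_scalloped_tree_iff_scalloped_piece} these trees are in bijection with the scalloped periodic Seifert pieces of the JSJ decomposition of $M$, which are the same pieces for both flows because $\fix(\phi_1) = \fix(\phi_2)$ detects both the fiber and the independent boundary classes needed for the scalloped periodic property. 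The discrepancy between $\phi_1$ and $\phi_2$ is therefore recorded by a finite subset $S$ of scalloped periodic Seifert pieces.

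For each $P \in S$, existence of a periodic Seifert flip of $\phi_1$ at $P$ is provided by \cite{BBY} in the Anosov case or by \cite{BarbFen_pA_toroidal, BF_totally_per} in the totally periodic case. Performing all these flips produces a flow $\psi$ which satisfies $\fix(\psi) = \fix(\phi_1) = \fix(\phi_2)$ by Proposition \ref{prop_same_spectra}, and which by construction differs from $\phi_1$ in the BFM twist data exactly on the pieces of $S$. Hence $\psi$ shares both $\fix$ and twist data with $\phi_2$, so the two are isotopically equivalent by \cite{BFM}. This concludes that $\phi_2$ is obtained from $\phi_1$ by periodic Seifert flips on $S$, up to isotopy equivalence, which is the desired statement.

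The main obstacle I anticipate is tracking how a periodic Seifert flip on a single piece $P$ affects the BFM invariant: one must show that a flip toggles exactly one coordinate of the twist data---the coordinate associated to the tree of scalloped regions corresponding to $P$ via Theorem \ref{thm_scalloped_tree_iff_scalloped_piece}---and leaves all others unchanged. This local-effect property is closely tied to Theorem \ref{thm:flip_unique}, but it requires a careful analysis in the orbit space near each scalloped region and a verification that simultaneous flips on disjoint pieces act independently on the twist data.
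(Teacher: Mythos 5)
Your outline matches the paper's approach: establish transitivity of both flows, apply the main classification theorem of \cite{BFM} to reduce the discrepancy between $\phi_1$ and $\phi_2$ to sign data on trees of scalloped regions, use Theorem \ref{thm_scalloped_tree_iff_scalloped_piece} to identify those trees with scalloped periodic Seifert pieces, then flip on each piece where the signs disagree. Two small points. First, the transitivity step you describe loosely (``distinguishing features in the periodic orbit spectrum'') is precisely the content of \cite[Proposition 2.38]{BFM}, and it is cleaner to cite it directly rather than argue from scratch. Second, the ``main obstacle'' you anticipate at the end --- that a flip on $P$ toggles exactly the sign coordinate of the tree associated to $P$ and leaves every other tree unchanged --- is not open; it is exactly Proposition \ref{prop:flips_change} together with the observation that the flip is supported in $P$ and hence cannot affect the sign data of any other scalloped piece. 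You reference Theorem \ref{thm:flip_unique}, which is related but concerns uniqueness rather than the sign change itself. With these two citations supplied, your argument is complete and is essentially the argument the paper gives.
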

In fact, a careful reading of \cite{BFM} shows that in the above result, it is enough to ask for one of the flows to be Anosov or one of the flows to be totally periodic, because equality of the free homotopy data will force both flows to be Anosov or both flows to be totally periodic. 
More precisely, \cite[Section 5]{BFM} uses only the free homotopy data of pseudo-Anosov flows to construct a natural map between the associated orbit spaces, whenever this free homotopy data agrees.  Though this map may have discontinuities, it preserves adjacency of $g$-invariant lozenges for any $g \in \pi_1(M)$, which is enough to deduce whether there are singular points or a tree of scalloped regions in the orbit space.

\subsection*{Outline} 
Section~\ref{sec:orbit_space} discusses background on the orbit space of a pseudo-Anosov flow and how features of the orbit space correspond to topological features in the supporting 3-manifold.  This includes the new result that embedded scalloped surfaces are always isotopic to cutting surfaces of the JSJ decomposition.   We pay special attention to manifolds which are not necessarily orientable, filling in gaps in the literature in this case. 
The section concludes with the proof of Theorem \ref{thm_scalloped_tree_iff_scalloped_piece}.   

Section~\ref{sec:spine} extends the notion of {\em spine} to periodic pieces of pseudo-Anosov flows in non-orientable manifolds, and using this gives the precise definition of periodic Seifert flip and statements of existence and uniqueness.   Sections~\ref{sec_model_flow} to \ref{sec:per_orbits} are devoted to the constructions of flips and the proof of Theorem \ref{thm_classification}.  

Section~\ref{sec_model_flow} constructs pairs of (partially defined) ``flipped flows" on elementary models representing neighborhoods of unions of weakly embedded Birkhoff annuli, and prove that every neighborhood of a spine of an Anosov flow in a periodic piece is represented by such a model.  
In Section~\ref{sec:build_flips} we show how to glue these models along boundary to obtain Anosov flows, and in Section \ref{sec:per_orbits} we use the results of \cite{BFM} to show uniqueness (up to isotopy) of flips and finish the proof of Theorem \ref{thm_classification}.  
Finally, in Section \ref{sec:equiv_flips} we discuss the question of isotopy and orbit equivalence of flips and give examples.  

\subsection*{Acknowledgements} 
TB was partially supported by the NSERC (Funding reference number RGPIN-2017-04592). SF was partially supported by NSF DMS-2054909. KM was partially supported by NSF CAREER grant DMS-1933598 and a Sloan fellowship.

\section{Topological features of $M$ and their counterparts in the orbit space} \label{sec:orbit_space}

Throughout this section we assume $M$ is a compact 3-manifold equipped with a pseudo-Anosov flow $\phi$.  We do not recall the definitions of Anosov or pseudo-Anosov flows here, one can refer e.g., to \cite{FH_book} or \cite{Calegari_book} for a general introduction.  In particular, $M$ is irreducible.    

\subsection{Structures in the orbit space.}
The {\em orbit space} of a flow $\phi$ on $M$ is obtained by lifting $\phi$ to a flow $\wt{\phi}$ on the universal cover $\wt M$, and then passing to the quotient $\orb_\phi : = \wt{M} / \sim$ where each orbit of $\wt{\phi}$ is collapsed to a point. 
It is a fundamental theorem that, for pseudo-Anosov flows on $3$-manifolds, $\orb_\phi$ is always homeomorphic to $\bR^2$ (\cite{Fen_Anosov_flow_3_manifolds,FenMosher}). The lifts of the stable and unstable foliations of $\phi$ project to one-dimensional, transverse foliations $\cF^{s}$ and $\cF^u$ of $\orb_\phi$, with isolated prong singularities if the flow is pseudo-Anosov rather than genuinely Anosov.  

The action of $\pi_1(M)$ on $\wt{M}$ descends to an action on $\orb_\phi$ by homeomorphisms preserving $\cF^{s}$ and $\cF^u$.   Under this action, fixed points in $\orb_\phi$ of a nontrivial element $g \in \pi_1(M)$ are exactly the projections to $\orb_\phi$ of the lifts of periodic orbits in the same free homotopy class as $g$ or $g^{-1}$.     

Much more can be read off the action of $\pi_1(M)$ on $\orb_\phi$.  In fact, by \cite[Theorem B]{Bar_caracterisation} (stated for Anosov flows, but the proof applies also to pseudo-Anosov flows), the action of $\pi_1(M)$ on $\orb_\phi$ completely determines the isotopy equivalence class of the flow.  Thus, a priori, everything about a flow can be recovered from the structure of the orbit space and action of $\pi_1(M)$, although in practice this is quite difficult to do.  Nevertheless, the orbit space has become an essential tool and we begin by quickly recalling some essential structural features and their relationship with topological features of $M$ from previous work of Barbot and Fenley.  Further background and some illustrations can be found in \cite{BFM} and references therein, or the general surveys \cite{Bar_HDR,B_mini_course}. 
		
 \begin{definition}\label{def_perfect_fit}
 Two leaves (or half leaves) $l^s$ in $\cF^s$ and $l^u$ in $\cF^u$ are said to make a \emph{perfect fit} if they have empty intersection, but  ``just miss'' each other. That is, there is an arc $\tau^s$ starting at a point of $l^s$ and transverse to $\cF^s$ and an arc $\tau^u$ starting at $l^u$ transverse to $\cF^u$ such that
\begin{enumerate}[label=(\roman*)]                                            
\item every leaf $k^s$ of $\cF^s$ that intersects the interior of ${\tau}^s$ intersects $l^u$, and
\item every leaf $k^u$ of $\cF^u$ that intersects the interior of ${\tau}^u$ intersects $l^s$.
 \end{enumerate}
 \end{definition}
 
 \begin{definition}
A \emph{lozenge $L$ with corners $x$ and $y$} is the open subset of $\orb_\phi$ ``bounded" by two pairs of half-leaves starting at $x$ and $y$ making perfect fits.  Precisely, let $x,y$ be two orbits in $\orb_\phi$ and suppose $r_x^{s,u}$ and $r_y^{s,u}$ are half leaves, where the subscript denotes the starting point and the superscript their respective foliation, such that $r_x^{s}$ and $r_y^{u}$ make a perfect fit, as does  $r_x^{u}$ and $r_y^{s}$. Then 
\begin{equation*}
 L := \lbrace p \in \orb_\phi \mid \cF^u(p) \cap r_x^s \neq \emptyset \text{ and } \cF^s(p) \cap r_x^u \neq \emptyset \rbrace
\end{equation*}
is called a {\em lozenge},   
the half-leaves $r_x^{s,u}$ and $r_y^{s,u}$ are called its \emph{sides}, and $x$ and $y$ the {\em corners}. 
A \emph{closed lozenge} is the union of a lozenge with its sides and corners. 
\end{definition}

\begin{definition}
A \emph{chain of lozenges} is a union of closed lozenges that satisfies the following connectedness property: for any two lozenges $L,L'$ in the chain, there exist lozenges $L_0, \dots,L_n$ in the chain such that $L=L_0$, $L'=L_n$, and, for all $i$, $L_i$ and $L_{i+1}$ share a corner (and may or may not share a side).
We say a chain of lozenge is \emph{maximal} if it is not contained in a strictly larger chain.
\end{definition}
We often pass back and forth between $\wt{M}$ and $\orb_\phi$, using ``lozenge" also to mean the union of orbits in $\wt{M}$ that project to a lozenge in $\orb_\phi$.  

Lozenges and chains of lozenges play a fundamental role because of the following  results of Fenley. 

\begin{proposition}[Fixed points are in chains of lozenges, Theorem 3.3 of \cite{Fenley_QGAF}]  \label{prop:fixed_chain}
 If $x \neq y\in \orb_\phi$ are both fixed by some nontrivial element $g\in \pi_1(M)$, then there exists a chain of lozenges fixed by $g$ and containing both $x$ and $y$ as corners.
 \end{proposition}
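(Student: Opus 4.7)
The plan is to build the chain of lozenges by exploiting the hyperbolic dynamics of $g$ on its invariant stable and unstable leaves together with a perfect fit argument. I would proceed in three stages.

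Stage 1 (setup and reductions): Since $g$ preserves the foliations $\cF^s,\cF^u$ and fixes both $x$ and $y$, it fixes the four leaves $\cF^s(x),\cF^u(x),\cF^s(y),\cF^u(y)$. Hyperbolicity of the flow translates into $g$ acting as a topological contraction on each fixed stable leaf and an expansion on each fixed unstable leaf, so $x$ is the unique $g$-fixed point on $\cF^s(x)$ and $\cF^u(x)$, and similarly for $y$. In particular $x$ and $y$ cannot share a stable or unstable leaf, and at each fixed point the four half-leaves are canonically distinguished by the dynamics.

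Stage 2 (producing corners by iteration): Take a transverse arc $\tau$ in $\orb_\phi \cong \mathbb R^2$ from $x$ to $y$ and look at the iterates $g^n(\tau)$. Because the endpoints are fixed and $g$ contracts along $\cF^s(x)$ while expanding along $\cF^u(x)$, the iterates $g^n(\tau)$ are ``funnelled'' near $x$ onto $\cF^s(x)\cup \cF^u(x)$ at one end and onto $\cF^s(y)\cup \cF^u(y)$ at the other. A standard limiting argument on nested intervals of transversals extracts accumulation data in the form of additional $g$-fixed points $x=z_0,z_1,\ldots,z_k=y$, which will serve as corners of the lozenges in the chain.

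Stage 3 (perfect fits and closing the chain): For each consecutive pair $z_i,z_{i+1}$, I would show that a suitable stable half-leaf from $z_i$ and an unstable half-leaf from $z_{i+1}$ (and the other diagonal pair) make a perfect fit, hence bound a lozenge. The argument is by contradiction: a transverse intersection would be a new $g$-fixed point lying on a fixed leaf, violating the uniqueness established in Stage 1; and a failure of the perfect fit condition would, via the contracting/expanding dynamics of $g$ on the two fixed half-leaves, allow one to push a small transversal and obtain a further fixed point obstructing the configuration. Gluing the resulting lozenges along their shared corners yields a $g$-invariant chain from $x$ to $y$.

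The main obstacle lies in Stage 3: verifying the perfect fit condition and simultaneously ensuring that the sequence of corners is finite and actually terminates at $y$. Finiteness should follow from discreteness of periodic orbits of $\phi$ in $M$ (fixed points of $g$ in $\orb_\phi$ project to finitely many periodic orbits in the free homotopy class of a power of $g$), while the perfect fit verification requires delicate case analysis using both the local hyperbolic dynamics at each corner and the global fact that $\orb_\phi$ is homeomorphic to $\mathbb R^2$ with two transverse one-dimensional foliations. These technicalities are exactly what makes Fenley's proof nontrivial, and I do not see a way to avoid the iteration argument or the case analysis for perfect fits.
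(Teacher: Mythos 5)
This proposition is cited from Theorem~3.3 of \cite{Fenley_QGAF}; the paper does not reproduce Fenley's proof, so there is no in-paper argument to compare against. Evaluating your sketch on its own merits: Stage~1 is fine. Stage~2, however, is where the real gap lies. Taking an arc $\tau$ from $x$ to $y$ and iterating $g^n(\tau)$ does not produce the required corners in any controlled way. The accumulation set of $\{g^n(\tau)\}$ in the bifoliated plane $\orb_\phi$ is a priori a complicated union of half-leaves, and the assertion that a ``standard limiting argument on nested intervals'' extracts a finite string $z_0,\dots,z_k$ of $g$-fixed points is essentially the content of the proposition itself, so as written the step is circular.

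Fenley's actual argument is structural rather than an iterate-and-pass-to-the-limit argument. One considers the maximal open product region $W$ attached to $x$ in the quadrant facing $y$, namely the set of $p$ with $\cF^s(p)\cap\cF^u(x)\neq\emptyset$ and $\cF^u(p)\cap\cF^s(x)\neq\emptyset$. If $y\in W$, then $\cF^s(y)\cap\cF^u(x)$ is a $g$-fixed point on $\cF^u(x)$, hence equals $x$ by Stage~1, forcing $y\in\cF^s(x)$, which contradicts Stage~1 again. So $y\notin W$, and one then analyzes the frontier of $W$ in that quadrant: the bounding half-leaves make a perfect fit, and $g$-invariance of the data produces a lozenge with $x$ as one corner and a new $g$-fixed point $z_1$ as the other. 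Repeating from $z_1$, termination at $y$ requires showing that the chain advances monotonically (e.g.\ that a fixed leaf through $z_1$ separates $x$ from $y$) and that the number of steps is finite, the latter coming from discreteness of the family of lifts of periodic orbits in a fixed free homotopy class. Your Stage~3 perfect-fit contradiction has the right flavor, but it cannot be run without first knowing \emph{where} to look for the perfect fit, which is exactly what the product-region analysis supplies and what your Stage~2 does not.
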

 Translating this picture to $\wt{M}$, this says that lifts of freely homotopic periodic orbits in $M$ are connected by a chain of lozenges. 
 
 \begin{rem}\label{rem_maximal_chain}
 Notice that one consequence of this result is that if a non-trivial element $g\in \pi_1(M)$ fixes at least two points in $\orb_\phi$, then it fixes a unique maximal chain of lozenges.
 \end{rem}
 
For the next statement, recall that leaves of a foliation are {\em non-separated} if they represent non-separated points in the leaf space of the foliation.  
 
\begin{proposition}[Nonseparated leaves are in lines of lozenges, Theorem D of \cite{Fenley_structure_branching}] \label{prop:nonsep_implies_fixed}
 If two leaves $l_1, l_2$ in $\cF^{s}$ or  $\cF^{u}$ are non-separated then there exists a nontrivial element $g\in \pi_1(M)$ fixing both. 
\end{proposition}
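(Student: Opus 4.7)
The plan is to combine the non-separation hypothesis with compactness of $M$ and a closing-lemma argument for pseudo-Anosov flows to produce the desired element $g$. First, I would translate the non-separation into concrete geometric data in $\orb_\phi$: choose a transverse arc $\tau$ to $\cF^s$ with endpoints $p_1 \in l_1$ and $p_2 \in l_2$, together with a sequence of leaves $k_n \in \cF^s$ meeting $\tau$ such that, as points of the leaf space, $k_n$ converges to both $l_1$ and $l_2$. A straightforward analysis shows that the two half-leaves of $\cF^u(p_1)$ and $\cF^u(p_2)$ pointing toward one another make perfect fits with $l_2$ and $l_1$ respectively (allowing for an intervening prong singularity), so that they bound a ``half-strip'' region of $\orb_\phi$.

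Second, I would push the picture into $M$ and apply the pseudo-Anosov closing lemma. The arc $\tau$ projects to a transverse arc in $M$; by compactness of $M$ and Poincaré recurrence, orbits through the lift of the half-strip return arbitrarily close to themselves, and combined with the hyperbolic dynamics (contraction along $\cF^s$, expansion along $\cF^u$) this produces a genuine periodic orbit $\alpha \subset M$ whose lift $\wt\alpha \subset \wt M$ projects to a point $x \in \orb_\phi$ lying in the half-strip. Let $g \in \pi_1(M)$ be a primitive element stabilizing $\wt\alpha$.

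Third, I would verify that $g$ actually fixes both $l_1$ and $l_2$, not merely some nearby leaves. Since $g$ preserves the foliations and fixes $x$, it acts on the local non-separation pattern at the branching ``between'' $l_1$ and $l_2$. Using that $\pi_1(M)$ has only finitely many orbits of branching leaves in the leaf space of $\cF^s$, a standard consequence of compactness of $M$, one concludes that $g$ permutes the finite set of leaves non-separated from the leaf through $x$ on the relevant side, so a power $g^k$ fixes both $l_1$ and $l_2$.

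The main obstacle is precisely this last step: ensuring that a single element of $\pi_1(M)$ fixes both leaves simultaneously, as the closing lemma naturally produces an element only stabilizing the leaf through the periodic orbit. Overcoming this requires a careful analysis of how deck transformations preserve the non-separation structure of the leaf space, together with the finiteness result for branching orbits; this is essentially where the depth of Fenley's \cite{Fenley_structure_branching} lies and why the proof cannot be reduced to a naive application of density of periodic orbits alone.
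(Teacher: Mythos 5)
The paper does not actually give a proof of this proposition: it cites Theorem~D of Fenley's structure-of-branching paper and remarks only that the Anosov-flow argument there carries through to the pseudo-Anosov setting. So what you have written is a from-scratch attempt at a substantial external result, not something that can be checked against an argument in the present paper.

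Your first two steps capture the right geometric setup: the non-separated leaves $l_1,l_2$ bound a product region in $\orb_\phi$ with perfect-fitting unstable half-leaves on the sides, and recurrence considerations are a reasonable way to produce a periodic point $x$ in that region. But the third step does not close. The leaf $\cF^s(x)$ through the periodic point is a generic \emph{interior} leaf of the product region, hence separated from everything nearby; ``the set of leaves non-separated from the leaf through $x$'' has no content, and in particular it is not the branching locus $\{l_1,l_2\}$. What you actually need is to show that $g$ preserves the product region itself, which would force $g$ to permute its two stable boundary leaves; your sketch omits this step, and it is precisely where the difficulty lives, since a priori $g$ only fixes $x$, $\cF^s(x)$, and $\cF^u(x)$, not the boundary-at-infinity of the region. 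Moreover, even if one replaces ``the leaf through $x$'' by $l_1$, the family of leaves non-separated from $l_1$ need not be finite: at the boundary of a scalloped region the families $l_k^{i,\sigma}$ (see Definition~\ref{def_scalloped_region}) are infinite and pairwise non-separated, so the counting-and-permuting argument you invoke is unavailable in exactly the cases relevant to this paper. Fenley's actual proof proceeds instead by analyzing how a deck transformation, interacting with the perfect-fit structure and the hyperbolic dynamics on $\cF^u(x)$, is forced to preserve the product region and hence its boundary leaves; that analysis, rather than a finiteness count, is the content of the theorem.
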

This is proved for Anosov flows in \cite{Fenley_structure_branching}, but the proof carries through directly in the more general pseudo-Anosov case.  

We call a chain of lozenges $\cC$ a \emph{line of lozenges} if there exists a single leaf of $\cF^s$ or $\cF^u$ that intersects every lozenge in $\cC$. Equivalently, a chain of lozenges is a line if any two adjacent lozenges share a stable side, or any two adjacent lozenge share an unstable side.
Combining Propositions \ref{prop:fixed_chain} and \ref{prop:nonsep_implies_fixed} with the fact that any $g$-invariant leaf contains a fixed point for $g$, one sees easily that any pair of nonseparated leaves always lies in a line of lozenges fixed by some nontrivial element $g$.  

In \cite[Theorem 5.2]{Fenley_structure_branching}, Fenley shows that any infinite line of lozenges is in fact contained in a bi-infinite line that has a special structure called a {\em scalloped region} described below (again, the proof applies generally to pseudo-Anosov flows though the theorem is stated for Anosov, see also \cite[Lemma 2.32]{BFM}).   Though technical, the definition is simply describing the picture shown in Figure \ref{fig_scalloped_region} of a trivially foliated open region of $\orb_\phi$ that can be expressed as a bi-infinite line of lozenges in two different ways.  

 \begin{definition}[Scalloped region]\label{def_scalloped_region}
  A \emph{scalloped region} is an open, unbounded set $U \subset \orb_\phi$ with the following properties:
  \begin{enumerate}[label=(\roman*)]
   \item The boundary $\partial U$ consists of the union of two families of stable leaves $l_k^{1,s}, l_k^{2,s}$ and two families of unstable leaves $l_k^{1,u},l_k^{2,u}$, indexed by $k\in \bZ$.
   \item The leaves of each family $l_k^{i,\sigma}$, $k \in \bZ$, $\sigma =s,u$ are pairwise non-separated.
   \item The boundary leaves are ordered so that there exists a (unique) leaf $f_k^{1,s}$ that makes a perfect fit with $l_k^{1,u}$ and $l_{k+1}^{1,u}$. Moreover, the sequence of leaves $f_k^{1,s}$ accumulates on the leaves $\cup_{i\in \bZ} l_i^{1,s}$ as $k \to \infty$, and  on $\cup_{i\in \bZ} l_i^{2,s}$ as $k \to -\infty$.  The analogous statement holds for leaves making perfect fits with the other families $l_k^{i,\sigma}$.
   \item The bifoliation is trivial inside $U$, i.e., for all $x \neq y\in U$, $\cF^s(x)\cap \cF^u(y) \neq \emptyset $ and  $\cF^s(y)\cap \cF^u(x) \neq \emptyset $ and $U$ contains no singular points.
  \end{enumerate}
 \end{definition}
\begin{figure}[h]
   \labellist 
  \small\hair 2pt
     \pinlabel $l_{i-1}^{1,s}$ at 8 85 
     \pinlabel $l_{i}^{1,s}$ at 70 84 
   \pinlabel $l_{i+1}^{1,s}$ at 122 84
    \pinlabel $f_i^{1,u}$ at 102 55
  \pinlabel $l^{1,u}_j$ at 485 25  
  \pinlabel $l_{i}^{1,s}$ at 410 115 
 \endlabellist
     \centerline{ \mbox{
\includegraphics[width=13cm]{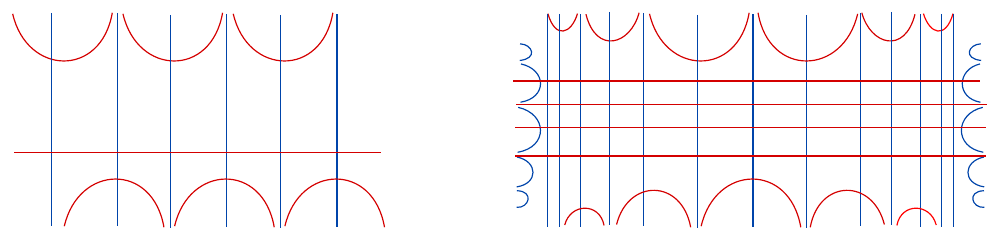} }}
\caption{A line of adjacent lozenges and a scalloped region.  The leaves $f_i^{j,u}$ are vertical (blue), $f_i^{j,s}$ are horizontal (red).}
\label{fig_scalloped_region}
\end{figure}

A scalloped region is associated with exactly two distinct infinite lines
of lozenges: The lozenges in one of them all intersect a common stable leaf,
and in the other chain of lozenges they intersect a common
unstable leaf.
The scalloped region is the union of the lozenges in one of 
these chains plus the half leaves which are contained in the
closure of ``consecutive" lozenges in such a chain.
We say that $L$ is a lozenge in a scalloped region if it is
a lozenge of one of the two such chains of lozenges.
Equivalently $L$ is a lozenge in a scalloped region $U$
if $L$ is contained in $U$.

We use the following elementary lemma on the structure of scalloped regions. 

\begin{lemma} \label{lem:intersection_scalloped}
Let $U$, $U'$ be scalloped regions with $U \cap U' \neq \emptyset$.  If $U \neq U'$, then their intersection is equal to $L \cap L'$ for some well defined,
unique lozenges $L$ of $U$ and $L'$ of $U'$.  
As a special case, if $U' = \alpha(U)\neq U$ for some $\alpha \in G$, then $L \neq L'$ and $\alpha$ has a unique fixed point in $P$, which lies in $L \cap L'$. 
\end{lemma}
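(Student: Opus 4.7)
Begin by picking a point $p \in U \cap U'$ lying in the interior of a (necessarily unique) lozenge $L$ of $U$ and a (unique) lozenge $L'$ of $U'$; this is possible because $U \cap U'$ is nonempty and open, whereas the union of sides and corners of all lozenges in $U$ or $U'$ is nowhere dense in $\orb_\phi$. Then $p \in L \cap L'$, which immediately gives $L \cap L' \neq \emptyset$ and $L \cap L' \subseteq U \cap U'$.

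The main content is the reverse inclusion $U \cap U' \subseteq L \cap L'$. Suppose toward contradiction that some $q \in U \cap U'$ satisfies $q \notin L$ (the case $q \notin L'$ is symmetric). Then $q$ lies in some lozenge $L_1 \neq L$ of $U$, so $L$ and $L_1$ are separated inside $U$ by one of the internal perfect-fit leaves of $U$ (one of the $f_k^{i,\sigma}$ from Definition \ref{def_scalloped_region}); call this separating leaf $f$. Since $f$ is a complete leaf of $\cF^s$ or $\cF^u$, it globally separates $\orb_\phi \cong \bR^2$ into two components with $p$ and $q$ on opposite sides, so any path in $U'$ from $p$ to $q$ crosses $f$; in particular $f$ meets $U'$. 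Now consider a boundary leaf $l$ of $U$ with which $f$ makes a perfect fit (by the definition there are two such leaves, both of type opposite to $f$). Since $f$ and $l$ are disjoint (perfect fits are disjoint), and since condition (iv) forces every stable leaf meeting $U'$ to cross every unstable leaf meeting $U'$, the leaf $l$ cannot meet $U'$. Hence $l$ lies either on $\partial U'$ or outside $\overline{U'}$. In the first case $l$ is a boundary leaf of both $U$ and $U'$; by condition (ii), the full non-separated family containing $l$ is part of both $\partial U$ and $\partial U'$, and running the same analysis on the other three boundary families forces them all to coincide, yielding $U = U'$ and contradicting the hypothesis. In the second case, $\partial U'$ must contain a leaf strictly between $f$ and $l$; by uniqueness of the perfect-fit partner of a given half-leaf, this $\partial U'$-leaf is non-separated from $l$ (Proposition \ref{prop:nonsep_implies_fixed}), which reduces to the first case. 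Handling this second case cleanly -- ruling out any boundary leaf of $U'$ that genuinely separates $f$ from its perfect-fit partners without producing a coincidence of boundary families -- is the main technical obstacle I anticipate.

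For uniqueness: any lozenge is determined by any of its interior points, so $p \in L \cap L'$ pins down both $L$ and $L'$. For the special case $U' = \alpha(U) \neq U$: the image $\alpha(L) \subseteq U'$ is a lozenge of $U'$ meeting $L$ nontrivially, hence by the uniqueness just established, $\alpha(L) = L'$. If instead $\alpha(L) = L$, then $\alpha$ permutes the two corners of $L$ and preserves the half-leaves issuing from them, so it preserves the entire internal and boundary perfect-fit structure of $U$, forcing $\alpha(U) = U$ and contradicting $U \neq U'$. Therefore $L' = \alpha(L) \neq L$. Finally, for the unique fixed point of $\alpha$: $\alpha$ preserves both foliations and sends $L$ homeomorphically onto the adjacent lozenge $L' = \alpha(L)$, so in the product coordinates on $L \cap L'$ inherited from the trivial bifoliation of $U$ (equivalently, of $U'$), $\alpha$ acts as a monotone shift in both the stable and unstable directions; an intermediate-value argument then provides a unique fixed coordinate in each direction, giving a unique fixed point of $\alpha$ in $L \cap L'$.
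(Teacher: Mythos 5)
Your opening premise is already false in a way that undermines the whole strategy: a scalloped region $U$ admits \emph{two} decompositions into lines of lozenges (the lozenges crossing a common stable leaf, and those crossing a common unstable leaf), and a generic point of $U$ lies in the interior of one lozenge from each line. So ``a (necessarily unique) lozenge $L$ of $U$'' containing $p$ does not exist; there are two candidates, and for your argument to have any hope of working you would need to choose the right one, which is precisely the content you would need to prove. The paper's proof makes this choice explicitly and cleanly: it shows that $U$ lies in the $\cF^s$-saturation of a uniquely determined lozenge $L'$ of $U'$ (because a boundary leaf of $U$ must cross $U'$, and the product structure of $U$ then confines $U$ to a single $\cF^s$-band), and symmetrically that $U'$ lies in the $\cF^u$-saturation of a unique lozenge $L$ of $U$; the equality $U\cap U' = L\cap L'$ then follows at once. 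Your point-and-contradiction route, by contrast, needs the unstated choice of chain, and you yourself flag that the ``second case'' of your separation argument is unresolved; in addition, the ``first case'' claim that a shared boundary leaf forces $U=U'$ is not justified (sharing one non-separated family of boundary leaves does not by itself identify the other three families, and this is exactly the sort of configuration one must rule out, not assume away). So the reverse inclusion $U\cap U' \subseteq L\cap L'$ is not established.

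On the fixed-point clause, your monotone-shift/intermediate-value sketch does produce a fixed point, but it does not by itself give \emph{uniqueness}: a monotone self-map of an interval can have many fixed points. The paper instead produces one fixed point by the interval-contraction argument and then invokes the fact that points inside a scalloped region cannot be lozenge corners, so by Proposition \ref{prop:fixed_chain} (two fixed points would have to be joined by a chain of lozenges, hence both be corners) the fixed point is unique. Your argument that $\alpha(L)\neq L$ also has a gap: from $\alpha(L)=L$ you conclude that $\alpha$ ``preserves the entire internal and boundary perfect-fit structure of $U$,'' but $L$ may a priori lie in two different scalloped regions, and fixing $L$ does not immediately fix which of the two $\alpha$ sends $U$ to; the paper avoids this issue entirely by observing directly that $\alpha(L)=L'$ and $L\neq L'$ as a byproduct of the saturation argument.
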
 

\begin{proof}
Suppose $U \cap U' \neq \emptyset$ and $U \neq U'$.  
We will show that, up to switching $\cF^s$ and $\cF^u$, 
there exists a unique lozenge $L$ in U such that the $\cF^s$ saturation of $L$ does not contain $U$ but does contain $U'$ and a
unique lozenge $L'$ of $U'$ such that the $\cF^u$ saturation of $L'$ does not contain $U$' but does contain $U$.  
In this case, we will then have $U \cap U' = L \cap L'$. 

First, observe neither region can be properly contained in the other, so some leaf of $\partial U$ intersects (and therefore necessarily crosses) $U'$.  Up to switching $\cF^s$ and $\cF^u$, say this is a leaf $l^s$ of $\cF^s$.   
Notice that $l^s$ is entirely contained in $\partial U$, so must cross the scalloped region $U'$.  Thus, $l^s$ either contains the side of a lozenge of $U'$, or it passes through a lozenge of $U'$.  In either case, since $l^s$ is in the boundary of $U$, the product structure of $U$ 
then implies that $U$ lies in the $\cF^s$ saturation of a uniquely determined lozenge $L'$ of $U'$. This lozenge
$L'$ is  either crossed by $l^s$,  or $L'$ has
a side contained in $l^s$ and $L'$ intersects $U$.
This implies in particular that $U \cap U' = U \cap L'$.
Similarly, the product structure of $U'$ implies that $U'$ must lie in the $\cF^u$ saturation of a unique, well defined lozenge $L$ of $U$.  
Thus, $U \cap U' = U' \cap L$, and as a consequence
$U \cap U' = L \cap L'$, where $L, L'$ are well determined, unique
lozenges, $L$ of $U$ and $L'$ of $U'$.

Note that it is possible that $L = L'$, and the lozenges are equal to $U \cap U'$. In that case $U \cup U'$ are part of a chain of lozenges.

To prove the second statement in the lemma, assume again for concreteness that $U$ lies in the $\cF^s$ saturation of $L'$. Here $L'$ is
a well defined, unique lozenge of $\alpha(U) = U'$.  
Let $I$ be the interval of leaves of $\cF^s$ that meet the closure $L$. 
This is a proper subset of the interval of leaves that meet $U$.  Thus, $\alpha(I)$ is a subset of the interior of $I$, so $\alpha$ has a fixed leaf that
intersects $L'$.  Similarly, applying $\alpha^{-1}$ to the interval of leaves of $\cF^u$ meeting the closure of $L'$, we find a fixed leaf for $\alpha$ that
intersects $L$. Hence there is a  fixed point for $\alpha$ on $\orb_\phi$, which lies in $L \cap L'$.  
Now it is easy to see that this fixed point of $\alpha$ cannot be the corner of a lozenge because it lies in a scalloped region (see, e.g., \cite[Lemma 2.29]{BFM}), so $\alpha$ has only
one fixed point 
by Proposition \ref{prop:fixed_chain}. Moreover, $\alpha (L) = L'$, so $L\neq L'$.
\end{proof}  

%

\begin{definition}\label{def_tree_scalloped}
A {\em tree of scalloped regions} $T \subset \orb_\phi$ is a chain of lozenges such that each lozenge in $T$ shares each of its four sides with some other lozenge in $T$.
\end{definition}

The terminology ``tree of scalloped regions" is justified by the following elementary observation. 
\begin{lemma} \label{lem:tree_scalloped} 
Let $T$ be a tree of scalloped regions.  Then any lozenge $L$ in $T$ is contained in exactly two distinct scalloped regions, each a subset of $T$.  
\end{lemma}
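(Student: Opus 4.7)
The plan is to construct two distinct scalloped regions inside $T$ that contain $L$, by extending $L$ across its stable and unstable sides using the tree property, and then show these are the only two.

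First, using the tree property, each side of $L$ is shared with a (unique, by the perfect-fit structure) other lozenge in $T$. Iterating this operation across the two stable sides of $L$ yields a bi-infinite line $\ell^s \subset T$ of lozenges with consecutive members sharing stable sides; doing the same across unstable sides yields another bi-infinite line $\ell^u \subset T$. By Fenley's structural theorem (\cite[Theorem 5.2]{Fenley_structure_branching}, referenced in the excerpt), each such bi-infinite line is a chain of a unique scalloped region; call these $U^s$ and $U^u$. Both contain $L$, and because $U^s$ as a subset of $\orb_\phi$ is the union of the closed lozenges of $\ell^s$ together with the connecting half-leaves (and analogously for $U^u$), the inclusions $U^s \subseteq T$ and $U^u \subseteq T$ follow directly from $\ell^s, \ell^u \subset T$ viewed as unions of lozenges in $\orb_\phi$.

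Next I argue $U^s \neq U^u$. The key structural point is that in any scalloped region $V$ the two associated chains consist of \emph{disjoint} sets of lozenges: a lozenge in the stable-side-sharing chain of $V$ has its two unstable sides lying on the boundary families $l_k^{i,u}$ of $\partial V$, while a lozenge in the unstable-side-sharing chain of $V$ has its two stable sides on the families $l_k^{i,s}$, and no single lozenge can simultaneously have (say) its two unstable sides both on $\partial V$ and shared with interior chain neighbors. Since $L$ is a member of the stable-side chain of $U^s$ and of the unstable-side chain of $U^u$, the equality $U^s = U^u$ would force $L$ to lie in both chains of the same scalloped region, a contradiction.

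For uniqueness, let $V$ be any scalloped region containing $L$. Then $L$ must be a member of one of the two chains of $V$. In the stable-side case, this chain is the unique maximal bi-infinite line of stable-side-sharing lozenges through $L$, namely $\ell^s$; since a scalloped region is determined by either of its chains, $V = U^s$. The unstable-side case gives $V = U^u$ analogously, so the two constructed scalloped regions exhaust all possibilities.

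The main obstacle I anticipate is the structural assertion in the second step---that the two chains of any scalloped region have disjoint lozenge sets. While this is implicit in Definition~\ref{def_scalloped_region} and the surrounding discussion of how each chain's lozenges meet the four boundary families of $\partial V$, carefully pinning down which sides of which chain's lozenges lie on which boundary leaves takes some unpacking of the scalloped-region picture.
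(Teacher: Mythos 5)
Your proposal takes essentially the same approach as the paper's proof: pick a pair of opposite sides of $L$, iterate across them using the tree property to produce a bi-infinite line of lozenges, invoke \cite[Theorem 5.2]{Fenley_structure_branching} to get a scalloped region, and repeat with the other pair of opposite sides. The paper's proof simply asserts that this produces a distinct scalloped region and that these two are the only ones containing $L$; your proposal usefully fills in that distinctness and uniqueness come from the fact that a lozenge belongs to exactly one of the two chains of any scalloped region it lies in, which is the right idea and consistent with the structure described around Definition~\ref{def_scalloped_region}.
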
 

\begin{proof} 
Pick a pair of opposite sides of $L$.  By definition of tree of scalloped regions, both sides are adjacent to other lozenges of $T$.  Iterating this, one produces a bi-infinite line of lozenges in $T$.  By \cite[Theorem 5.2]{Fenley_structure_branching}, the union of these lozenges gives a scalloped region.  One also may produce a line using the other pair of opposite sides, giving a distinct scalloped region, these are the unique scalloped regions containing $L$. 
\end{proof}

We note one more fact for future reference. 

\begin{lemma} \label{lem:tree_fix_corners}
If $T$ is a tree of scalloped regions in $\orb_\phi$, then there exists a nontrivial $g \in \pi_1(M)$ fixing all corners of lozenges in $T$.   Furthermore, $T$ is the maximal chain of $g$-invariant lozenges: Any chain preserved by $g$ is a sub-chain of $T$.  
\end{lemma}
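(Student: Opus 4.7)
The plan is to first show that for any single scalloped region $U \subset T$ there exists a nontrivial element of $\pi_1(M)$ fixing all corners of lozenges of $U$, then to show a common such element can be chosen for the whole tree $T$, and finally to extract maximality from Remark~\ref{rem_maximal_chain}.

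For the first step, by Definition~\ref{def_scalloped_region} the boundary $\partial U$ contains the pairwise non-separated family $\{l_k^{1,s}\}_{k \in \bZ}$. Applying Proposition~\ref{prop:nonsep_implies_fixed} to $l_0^{1,s}$ and $l_1^{1,s}$ gives a nontrivial element $g_U \in \pi_1(M)$ fixing both. I will then argue, after replacing $g_U$ by a suitable power, that $g_U$ fixes every leaf in the family and, symmetrically, every leaf of the other three boundary families of $U$: indeed $g_U$ preserves the bifoliation, hence permutes the $\bZ$-indexed family $\{l_k^{1,s}\}$ of pairwise non-separated leaves, and since it fixes two adjacent ones it must fix them all. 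The corners of lozenges of $U$ then lie at intersections of $g_U$-fixed stable and unstable leaves, so $g_U$ fixes each such corner.

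Next I propagate across $T$. By Lemma~\ref{lem:tree_scalloped} any two scalloped regions of $T$ are joined by a sequence of scalloped regions of $T$ in which consecutive members share a common lozenge. Given adjacent $U, U'$ sharing a lozenge $L$ with corners $x, y$, both $g_U$ and $g_{U'}$ lie in the stabilizer of the periodic orbit represented by $x$, which is cyclic; hence they share a common nontrivial power $g$. This $g$ still fixes every corner of both $U$ and $U'$, and iterating through the connecting sequence produces a single nontrivial $g \in \pi_1(M)$ fixing every corner of every lozenge in $T$.

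Finally, by Remark~\ref{rem_maximal_chain} there is a unique maximal chain $T^\star$ of $g$-invariant lozenges, and $T \subseteq T^\star$ by construction. For the reverse inclusion, suppose $L^\star \in T^\star \setminus T$ shares a corner with some lozenge $L$ of $T$. That corner lies in a scalloped region $U \subset T$, and since $g$ fixes $L^\star$, the stable and unstable sides of $L^\star$ emanating from that corner must coincide with boundary half-leaves of some scalloped region containing $L^\star$; by Lemma~\ref{lem:intersection_scalloped} that scalloped region meets $U$ in a single lozenge, and Definition~\ref{def_tree_scalloped} (together with Lemma~\ref{lem:tree_scalloped}) forces it to already be one of the scalloped regions of $T$, so $L^\star \in T$. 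I expect this maximality step to be the main obstacle: a $g$-invariant lozenge sharing only a corner (rather than a side) with $T$ is not immediately excluded, and exploiting the ``all four sides'' condition of Definition~\ref{def_tree_scalloped} together with the product-structure analysis of Lemma~\ref{lem:intersection_scalloped} will be essential to close the argument.
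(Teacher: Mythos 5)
The approach matches the paper's (apply Proposition~\ref{prop:nonsep_implies_fixed} to a boundary family of one scalloped region, propagate, and then invoke Remark~\ref{rem_maximal_chain}); the paper simply states both steps tersely, and you have tried to supply the missing details. There are two places where your filling-in does not yet close.

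First, the propagation by \emph{iterated common powers} is not safe when $T$ contains infinitely many scalloped regions: the sequence of common powers need not stabilize, so the ``limit'' element could be trivial. You only need one common power. Once $g$ (the common power of $g_U$ and $g_{U'}$) fixes all corners of $U$ and $U'$, it already fixes both corners of the shared lozenge between $U'$ and the next region $U''$, so it preserves $U''$ and permutes each of the four $\bZ$-indexed boundary families. An order-reversing action on these families would force an interior fixed point of $g$ in $U''$; but interior points of a scalloped region are not corners of lozenges (cf.\ the argument used in Lemma~\ref{lem:intersection_scalloped} and \cite[Lemma 2.29]{BFM}), so such a fixed point cannot be joined by a chain to the corners $g$ already fixes, contradicting Proposition~\ref{prop:fixed_chain}. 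Hence $g$ is order-preserving on each family and, fixing one leaf in each, fixes all of them, hence all corners of $U''$ --- no further powers needed.

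Second, for the maximality step that you flag as the main obstacle: the clean argument does not go through Lemma~\ref{lem:intersection_scalloped} at all. The ``each lozenge shares all four of its sides'' condition in Definition~\ref{def_tree_scalloped} forces, at every corner $c$ of a lozenge of $T$, that \emph{all} quadrants at $c$ are occupied by lozenges of $T$ (pass from one quadrant to the next using shared sides at $c$). Moreover, a corner together with a choice of quadrant determines at most one lozenge, since each of the two bounding half-leaves has at most one perfect-fit partner, which pins down the opposite corner. Therefore any lozenge sharing a corner with $T$ is already one of the lozenges of $T$, i.e.\ $T$ is a maximal chain. From here, both your argument and the paper's conclude immediately via Remark~\ref{rem_maximal_chain}.
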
 

\begin{proof} 
The first statement is direct consequence of Proposition \ref{prop:nonsep_implies_fixed}.  For the second statement, note first that $T$ is a maximal chain, so by Remark \ref{rem_maximal_chain} no other lozenge has corners fixed by $g$ (or any of its powers).  Thus, if $\cC$ were another invariant chain, then $g$ would permute the lozenges of $\cC$ and thus act without fixed points in $P$, which is a contradiction.  
\end{proof}

\subsection{Cutting surfaces in $M$} \label{subsec_cutting_surfaces}
We now describe topological features of a 3-manifold $M$ with a pseudo-Anosov flow that give rise to the features in the orbit space of the flow described above.  The case of interest to us is when $M$ has nontrivial JSJ decomposition.   We do not assume $M$ is orientable, so the cutting surfaces of the JSJ decomposition may be tori or Klein bottles.  When $P$ is a Seifert piece, the {\em boundary} of $P$ is the union of cutting surfaces adjacent to $P$.  Note that this is not necessarily the same as the topological boundary of $P$ in $M$ as various boundary surfaces of $P$ may be glued to each other in pairs.  We recall two definitions briefly mentioned in the introduction. 

\begin{definition}[Following \cite{BF_totally_per}]
 $P$ is called a \emph{periodic Seifert piece} for $\phi$ if there is a Seifert fibration of $P$ such that a regular fiber is freely homotopic to a periodic orbit of $\phi$.   
\end{definition} 

\begin{definition}\label{def:scalloped}
A $\pi_1$-injective torus or Klein bottle in $M$ is called a {\em scalloped surface} if its fundamental group contains two independent elements that are freely homotopic to periodic orbits of $\phi$.  A periodic Seifert piece $P$ is called a {\em scalloped piece} if {\em each} of its boundary surfaces is a scalloped surface.  
\end{definition} 

\begin{rem} \label{rem:PneqM}
By \cite{Barbot96} (and \cite[Theorem 4.1]{BarbFen_pA_toroidal} for the pseudo-Anosov case), 
 a pseudo-Anosov flow on a closed Seifert manifold is necessarily orbit equivalent to the lift of a geodesic flow on the unit tangent bundle of a hyperbolic orbifold (and hence Anosov).  In particular, a closed Seifert manifold cannot be a periodic Seifert piece for such flow, nor can a closed Seifert manifold admit a scalloped surface.  
\end{rem}

\begin{convention*}
In this article we will say that a set $\wt S$ in $\wt M$ \emph{projects} to a chain of lozenges $\cC$, or a scalloped region $U$, in $\orb_\phi$ if the closure of $\pi(\tilde S)$ is equal to the closure of $\cC$, or the closure of $U$, in $\orb_\phi$, where $\pi\colon \wt M \to \orb_\phi$ denotes the projection map.
\end{convention*}
The reason for the above convention is that we will be considering lifts $\wt S$ of tori or Klein bottles, such that their projections $\pi(\wt S)$ always contain the interiors of the lozenges in a chain $\cC$, but may either contain a corner or a side of each lozenge (though never both).  What is of primary importance to us is that these chains or scalloped regions are naturally associated to $\wt S$ and invariant under the appropriate conjugate of $\pi_1(S)$ when $\wt S$ is the lift of $S$ to $\wt M$. 

The following statement is shown within the proof of \cite[Theorem B]{Barbot_MPOT} (but with different terminology there). See also Lemma 5.5 of \cite{BarbFen_pA_toroidal} for a proof of a more general statement.  While these assume $M$ orientable (and hence take $\pi_1(S) = \mathbb{Z}^2$), orientability is not used in an essential way, and one obtains the result below with the same proof.  

\begin{proposition}[ \cite{Barbot_MPOT,BarbFen_pA_toroidal}]  \label{prop:scalloped_projection}
Let $S$ be a scalloped surface and $\wt{S}$ a lift of $S$ to $\wt{M}$.  Then, up to homotopy of $S$, the projection 
of $\wt{S}$ to $\orb_\phi$ is a scalloped region. 

Moreover, if $\wt{S}$ is chosen to be the lift invariant under $\pi_1(S)$, then there are independent elements $g_1$ and $g_2$ for $\pi_1(S)$ such that $g_1$ translates one family of lozenges making up the scalloped region, while preserving each lozenge in the other, and $g_2$ preserves the lozenges of the first family and translates (or, in the case of a Klein bottle, translates and reflects) the other.  
\end{proposition}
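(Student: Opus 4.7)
The plan is to first locate a scalloped region $U \subset \orb_\phi$ invariant under $H := \pi_1(S)$, and then identify $U$ with the projection $\pi(\wt S)$ up to homotopy of $S$. Write $H \cong \bZ^2$ in the torus case, and in the Klein bottle case let $H^0 \leq H$ denote the canonical index-two $\bZ^2$ subgroup. By hypothesis, one may pick two independent elements $g_1, g_2 \in H$ each freely homotopic to a periodic orbit of $\phi$; after passing to squares if necessary we arrange $g_1, g_2 \in H^0$, so that $[g_1, g_2] = 1$, and handle the reflecting generator of $H/H^0$ at the end.

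Each $g_i$ fixes some point $x_i \in \orb_\phi$, namely the projection of a lift of the corresponding periodic orbit. By Proposition \ref{prop:fixed_chain} and Remark \ref{rem_maximal_chain}, $g_1$ preserves a unique maximal chain of lozenges $\cC_1$ having $x_1$ among its corners. The first key step is to show $\cC_1$ is infinite. If it were finite, then since $g_2$ commutes with $g_1$ and therefore permutes the corners of $\cC_1$, some power $g_2^n$ would fix every corner of $\cC_1$ pointwise; but each such corner is the projection of a periodic orbit whose stabilizer in $\pi_1(M)$ is infinite cyclic, so $g_2^n$ and some power $g_1^m$ would both lie in that cyclic stabilizer and hence share a common power, contradicting independence of $g_1$ and $g_2$ in $H$. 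Therefore $\cC_1$ is infinite, and so contains an infinite $g_1$-invariant line of lozenges, which by \cite[Theorem 5.2]{Fenley_structure_branching} lies in a scalloped region $U$. Maximality of $\cC_1$, together with the product structure of $U$, identifies $\cC_1$ with one of the two canonical families of lozenges filling $U$, and forces $g_1$ to translate the other family while fixing each lozenge of $\cC_1$ individually.

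Next, since $g_2$ commutes with $g_1$ it carries $\cC_1$ into itself and so preserves $U$. The same rigidity argument rules out $g_2$ fixing every corner of $\cC_1$, so $g_2$ acts nontrivially on $U$; but any nontrivial element preserving $U$ must translate along one of the two canonical families of lozenges while fixing each lozenge of the other, and independence prevents $g_2$ from acting as a power of $g_1$. Hence $g_2$ must translate along the family $g_1$ fixes, and conversely, giving the desired description. In the Klein bottle case, the extra generator $b \in H \setminus H^0$ satisfies $b g_i b^{-1} = g_i^{\pm 1}$, so $b$ again preserves $U$, and the conjugation relation forces $b$ to act by a reflection along one of the two translation directions combined with a translation along the other.

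Finally, to identify $\pi(\wt S)$ with $U$ up to homotopy of $S$, we follow the strategy of Barbot \cite{Barbot_MPOT} (see also \cite{BarbFen_pA_toroidal}): after an isotopy, $\wt S$ becomes a plane in $\wt M$ in quasi-transverse position with respect to $\wt \phi$, so $\pi(\wt S)$ is an $H$-invariant connected subset of $\orb_\phi$ whose closure contains the full $H$-orbits of $x_1$ and $x_2$ together with the perfect-fit structure between them. Cocompactness of the $H$-actions on both $\wt S$ and on $U$ (the latter visible from the two translation generators constructed above) then forces the closures of $\pi(\wt S)$ and $U$ in $\orb_\phi$ to coincide. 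The main obstacle is the rigidity step in the second paragraph, which converts algebraic independence of $g_1, g_2 \in H$ into combinatorial nonequality of their orbit-space actions; once this is in hand, the Klein bottle case is a minor addition and the final identification reduces to a standard normal-position argument for $S$ with respect to the flow, as carried out in the cited references.
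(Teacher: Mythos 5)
The paper does not actually give a proof of this proposition; it cites Barbot's \emph{Mise en position optimale} (within the proof of Th\'eor\`eme~B) and Lemma~5.5 of \cite{BarbFen_pA_toroidal}, noting only that the arguments there do not use orientability in an essential way. Your proposal instead re-derives the result from Fenley's structural results about lozenges, which is a legitimate alternate route, but it contains a genuine gap at the key step.

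The gap is in the passage from ``$\cC_1$ is infinite'' to ``$\cC_1$ contains an infinite $g_1$-invariant \emph{line} of lozenges.'' An infinite chain of lozenges need not contain an infinite line: lozenges in a chain are only required to share corners (Definition~2.4 of the paper), whereas a line requires all consecutive pairs to share a stable side, or all to share an unstable side, so that a single leaf crosses every lozenge. A chain can branch, or alternate between stable-side and unstable-side adjacencies, or pass through pairs of lozenges in opposite quadrants of a corner sharing only that corner; none of these configurations produce a line. It is precisely the scalloped hypothesis (that \emph{both} $g_1$ and $g_2$ represent periodic orbits) which should rule out these alternatives, but your argument for infiniteness of $\cC_1$ only uses that $g_2$ normalizes $\cC_1$ and that corner stabilizers are cyclic; this much is true even for a non-scalloped boundary torus (compare Proposition~2.20(2) of the paper, where the $\bZ^2$-invariant chain is emphatically not a scalloped region). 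You need a further argument showing that the axis of $g_2$ in the tree associated to $\cC_1$ is actually a line, and this is where the hypothesis that $g_2$ also has a fixed point must enter.

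Two secondary issues: first, the Klein-bottle relation $b g_i b^{-1} = g_i^{\pm1}$ only holds when $g_1, g_2$ are chosen along the standard basis of $H^0$; a generic $g_i \in H^0$ is sent by conjugation to a product $g_1^{\pm p} g_2^{\pm q}$ with mixed signs, so the claimed description of the action of $b$ needs the basis specified. Second, the final identification of $\pi(\wt S)$ with $U$ via ``a standard normal-position argument'' is exactly the part of the cited proofs of Barbot and Barbot--Fenley that does the real work on the geometric side; invoking it as a black box is reasonable given the paper does the same, but then the proposal is not fully self-contained either. The algebraic step~4 (ruling out finite $\cC_1$ via cyclic stabilizers) and the later determination of how $g_1, g_2$ act on the two families of lozenges once $U$ is in hand are both sound.
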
 

Our next goal (Lemma \ref{lem:scalloped_tori_are_JSJ}) is to show that embedded scalloped surfaces are always isotopic to 
cutting surfaces of the JSJ decomposition of $M$.  Before this, we recall a few results from \cite{BarbFen_pA_toroidal} that show cutting surfaces of
 the JSJ decomposition can be taken in good position with respect to the flow.     

\begin{definition}
A {\em Birkhoff annulus} is an immersed closed annulus in $M$ bounded by periodic orbits of $\phi$, with interior transverse to $\phi$.
A {\em Birkhoff surface} is a torus or Klein bottle formed by a union of Birkhoff annuli glued along their boundary components.  
\end{definition} 
 
\begin{definition}
A Birkhoff surface or annulus $S$ is called {\em weakly embedded} if the complement of the periodic orbits in $S$ is embedded.  
\end{definition}

The following result relates Birkhoff surfaces and invariant chains of lozenges. The forward direction is immediate from the definition, and the backward direction is proven in Proposition 6.7 of \cite{BarbFen_pA_toroidal}.\footnote{There is a typo in the statement of Proposition 6.7 of \cite{BarbFen_pA_toroidal}, where they write \emph{string} of lozenges, instead of chain. But the proposition and its proof holds for any chain of lozenges, not just strings.}
\begin{proposition}[\cite{BarbFen_pA_toroidal} Proposition 6.7] \label{lem:Z2_invariant_chains} 
Let $S$ be a Birkhoff surface.  Then a lift $\wt{S}$ to $\wt{M}$ projects to a chain of lozenges invariant under (a conjugate of) $\pi_1(S)$. 

Conversely, suppose $\cC$ is a chain of lozenges that is invariant by a subgroup $\Gamma$ of $\pi_1(M)$, isomorphic to $\bZ^2$ or $\pi_1(K)$, with no strict subchain preserved by $\Gamma$.   
Then there exists a $\pi_1$-immersed torus or Klein bottle in $M$, with fundamental group (conjugated to) $\Gamma$ and such that one of its lifts projects to $\cC$.
\end{proposition}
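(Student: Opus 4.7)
The plan is to prove each direction separately, with the converse being the substantive direction.

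For the forward direction, write $S = \bigcup_i A_i$ as a union of Birkhoff annuli. Each annulus $A_i$ has boundary periodic orbits and interior transverse to $\phi$, so a lift $\wt A_i$ to $\wt M$ is a strip whose two boundary components are lifts $\wt \gamma_i^1, \wt \gamma_i^2$ of these periodic orbits, and whose interior is transverse to $\wt \phi$. The interior therefore projects injectively to an open region $R_i \subset \orb_\phi$ whose closure contains the two corner points $x_i^1, x_i^2$ to which $\wt \gamma_i^1, \wt \gamma_i^2$ collapse. I would then show that $R_i$ is a lozenge in the sense of Definition~\ref{def_perfect_fit} and the definition of lozenge. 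The key observation is that the stable and unstable foliations of $\phi$ cut $A_i$ in a pair of transverse foliations spiralling toward each boundary orbit; transversality together with the fixed-point structure at $\wt \gamma_i^1$ and $\wt \gamma_i^2$ forces the boundary half-leaves to satisfy the ``almost meet but do not meet'' condition characterising a perfect fit. Two annuli sharing a boundary orbit give lozenges sharing a corner, so $\wt S$ projects to a chain $\cC_S$, which is invariant under the stabiliser of $\wt S$, a conjugate of $\pi_1(S)$.

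For the converse, given a $\Gamma$-invariant chain $\cC$, note first that by Proposition~\ref{prop:fixed_chain} every corner of every lozenge of $\cC$ is a fixed point of some nontrivial element of $\Gamma$, hence projects to a periodic orbit of $\phi$. For each lozenge $L \in \cC$ with corners $x, y$, pick a flow-transverse arc $\tau_L$ in $\wt M$ from the $\wt \phi$-orbit over $x$ to the orbit over $y$, and saturate it by $\wt \phi$ inside the $\wt \phi$-preimage of $\overline L$ to produce a strip $\wt A_L$. If $g_L \in \Gamma$ fixes both $x$ and $y$, the quotient $\wt A_L / \langle g_L \rangle$ is a Birkhoff annulus $A_L$ in $M$. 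The minimality hypothesis on $\cC$ means $\Gamma$ acts cocompactly on $\cC$, so a finite set of $\Gamma$-orbit representatives of lozenges yields finitely many annuli; gluing these along the periodic orbits coming from shared corners (which the chain structure pairs up correctly) produces a closed immersed surface $S$ with $\pi_1(S) = \Gamma$, so that $\pi_1$-injectivity is automatic from $\Gamma \hookrightarrow \pi_1(M)$.

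The main obstacle is, within the converse, carrying out the gluing coherently. One must choose the transverse arcs $\tau_L$ so that adjacent annuli meeting at a corner orbit $\wt \gamma$ fit together to a continuous (though non-smooth) surface along $\wt \gamma$; this requires enough flexibility in the choice of $\tau_L$, which is provided by the freedom to push $\tau_L$ along the flow near its endpoints. The Klein bottle case demands additional care: when $\Gamma \cong \pi_1(K)$, an orientation-reversing element acts on $\cC$ by a ``translation with reflection'' as in Proposition~\ref{prop:scalloped_projection}, and one must arrange the Birkhoff annuli $A_L$ and the identifications between them to be equivariant under this action so that the quotient closes up to a genuine Klein bottle. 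Once those two technicalities are managed, the construction is local-to-global and produces the required Birkhoff surface whose lift projects to $\cC$.
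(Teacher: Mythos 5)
The paper itself does not prove this proposition: it records the forward direction as ``immediate from the definition'' and cites \cite[Proposition~6.7]{BarbFen_pA_toroidal} for the substantive converse direction, so there is no in-paper proof to compare against. That said, your sketch has two genuine gaps that are worth naming.

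First, in the forward direction you claim that each lifted Birkhoff annulus $\wt A_i$ projects to a single lozenge $R_i$. This is only true when $A_i$ is \emph{elementary}; a general Birkhoff annulus (and Definition~4.2 does not assume elementarity) can have interior compact leaves of the induced stable or unstable foliation, in which case the lift projects to a whole string of consecutive lozenges rather than one. The conclusion (that $\wt S$ projects to a chain) still holds, but your argument as written would assert perfect fits between half-leaves that in fact bound several lozenges, not one.

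Second, and more seriously, in the converse you write that ``by Proposition~\ref{prop:fixed_chain} every corner of every lozenge of $\cC$ is a fixed point of some nontrivial element of $\Gamma$.'' Proposition~\ref{prop:fixed_chain} runs in the opposite direction: it says that two points fixed by the \emph{same} $g$ lie in a common $g$-invariant chain. It does not say that corners of a $\Gamma$-invariant chain are automatically fixed. That every corner is fixed is true, but it requires a separate argument — for instance, viewing the corners of $\cC$ as a $\Gamma$-invariant tree, noting that $\bZ^2$ and $\pi_1(K)$ cannot act freely on a tree, then using minimality of $\cC$ together with Proposition~\ref{prop:nonsep_implies_fixed} to propagate fixed-ness along the chain. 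Without this, the construction of the deck element $g_L$ stabilizing $\wt A_L$ never gets off the ground. Relatedly, even once $g_L$ exists you still need to choose the transverse arc $\tau_L$ \emph{equivariantly} with respect to $\langle g_L\rangle$ so that $\wt A_L$ actually descends to a Birkhoff annulus in $M$; this needs to be said, and is part of the same circle of technicalities as the gluing issue you flag. You correctly identify the gluing coherence and the Klein-bottle case as the hard part, but since those are precisely the content of the cited Proposition~6.7, a proof that defers them remains essentially a citation in disguise.
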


Using this, we can show the following converse statement to Proposition \ref{prop:scalloped_projection}.  

\begin{lemma}\label{lem_scalloped_region_to_torus}
Let $U$ be a scalloped region in $\orb_\phi$.  Then there is a $\pi_1$-immersed torus or Klein bottle $S$ in $M$ with a lift $\wt{S}$ in $\wt{M}$ which projects to $U$.  Moreover, the stabilizer of $U$ contains the corresponding conjugate of $\pi_1(S)$ as a finite index subgroup. 
\end{lemma}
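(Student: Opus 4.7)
The plan is to construct a subgroup $\Gamma \leq \pi_1(M)$, isomorphic to $\bZ^2$ or $\pi_1(K)$, which preserves $U$ but preserves no strict sub-chain of lozenges in $U$, and then invoke the converse direction of Proposition \ref{lem:Z2_invariant_chains} to obtain the desired immersed surface $S$. The final task is to show that the corresponding conjugate of $\pi_1(S)$ has finite index in the stabilizer of $U$.

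First I construct the generators. By Definition \ref{def_scalloped_region}, the boundary of $U$ consists of four pairwise non-separated families of leaves $\{l_k^{i,\sigma}\}_{k\in\bZ}$. Applying Proposition \ref{prop:nonsep_implies_fixed} to two leaves in one stable family yields a nontrivial $\alpha \in \pi_1(M)$ fixing both. Because the pairwise non-separated set containing these leaves is canonically determined, $\alpha$ permutes the whole family; after passing to a power I may assume $\alpha$ acts as a single-step shift on this family. Tracking the perfect fits in Definition \ref{def_scalloped_region} shows $\alpha$ preserves $U$ and translates one of the two infinite lines of lozenges comprising $U$ by one step. A symmetric construction starting from a pair in an unstable family produces $\beta \in \pi_1(M)$ translating the transverse line. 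Both $\alpha$ and $\beta$ lie in the stabilizer of $U$.

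Next I verify that $\Gamma := \langle \alpha, \beta \rangle$ is isomorphic to $\bZ^2$ or $\pi_1(K)$. The elements $\alpha$ and $\beta$ act on the combinatorial set of lozenges of $U$ as commuting shifts (respectively, related by a Klein bottle relation in the non-orientable case). The commutator $[\alpha,\beta]$ thus fixes every lozenge in $U$ set-wise, and because each such element preserves $\cF^s$ and $\cF^u$ separately, it must fix a corner of some lozenge. But $U$ is a scalloped region, and as observed in the last paragraph of the proof of Lemma \ref{lem:intersection_scalloped}, no fixed point of a nontrivial element of $\pi_1(M)$ inside a scalloped region can be a corner of a lozenge; hence the relator is trivial. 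Since $\alpha$ and $\beta$ translate in independent directions on $U$, the group $\Gamma$ has the required rank, and since their shift actions cover all lozenges of $U$, no strict sub-chain of $U$ is $\Gamma$-invariant. Proposition \ref{lem:Z2_invariant_chains} therefore produces a $\pi_1$-immersed torus or Klein bottle $S$ with $\pi_1(S)$ (up to conjugation) equal to $\Gamma$, and a lift $\wt{S}$ projecting to $U$.

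The main obstacle I anticipate is the finite-index statement. My plan is to show that any $\gamma \in \mathrm{Stab}(U)$ acts on the combinatorial lattice of lozenges of $U$ and respects the decomposition of the boundary into the two types of non-separated families (up to a finite group $F$ of symmetries that may swap the two families of each $\sigma$-type). This yields a homomorphism $\mathrm{Stab}(U) \to \bZ^2 \rtimes F$; its kernel is trivial by the same fixed-point argument used to force $[\alpha,\beta] = \id$, and its image contains the $\bZ^2$ corresponding to $\Gamma$. Since $F$ is finite, this gives $[\mathrm{Stab}(U) : \Gamma] < \infty$, completing the proof.
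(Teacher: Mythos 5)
Your overall strategy matches the paper's — find a $\bZ^2$ or $\pi_1(K)$ subgroup preserving the chain with no invariant proper sub-chain, and apply the converse direction of Proposition \ref{lem:Z2_invariant_chains} — but the construction of the generators has a genuine gap. Proposition \ref{prop:nonsep_implies_fixed} applied to two non-separated leaves $l^{1,s}_0$, $l^{1,s}_1$ produces an element $\alpha$ that \emph{fixes} both of them; no power of such an $\alpha$ can act as a single-step shift on a family containing two leaves it already fixes. In fact, since any $g$-invariant leaf contains a $g$-fixed point and fixed points are connected by chains of lozenges (Proposition \ref{prop:fixed_chain}), such an $\alpha$ fixes the corners of an entire chain of lozenges of $U$ and hence fixes \emph{every} leaf in the stable family. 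Any translating behaviour of $\alpha$ is on the unstable families, not the stable one. The paper uses Proposition \ref{prop:nonsep_implies_fixed} in exactly the opposite way: after passing to the finite-index subgroup $G_0\leq\mathrm{Stab}(U)$ preserving each of the four boundary families, it records shift amounts on $l^{1,s}_\bullet$ and $l^{1,u}_\bullet$ to obtain a homomorphism $G_0\to\bZ^2$, and then uses Proposition \ref{prop:nonsep_implies_fixed} to exhibit nontrivial elements in the \emph{kernel} of each coordinate (i.e., fixing one of the two families), from which it concludes that the image has rank $2$. Proposition \ref{lem:Z2_invariant_chains} is then applied.

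A secondary problem is the argument that $[\alpha,\beta]$ is trivial. You invoke the observation that a fixed point in the interior of a scalloped region cannot be a corner of a lozenge, but the corners of the lozenges comprising $U$ lie on $\partial U$, not in the interior, so that observation does not apply to them. Moreover, fixing all corners of the lozenges of $U$ is not in itself contradictory: when $U$ is part of a tree of scalloped regions, the fiber element of the associated periodic Seifert piece does exactly this. Establishing that an element of $\mathrm{Stab}(U)$ fixing each lozenge set-wise must be trivial requires a further argument (e.g., that such an element is simultaneously a power of two independent deck transformations fixing the two chains of corners, hence the identity), and this same gap appears again in your assertion that the kernel of the proposed homomorphism $\mathrm{Stab}(U)\to\bZ^2\rtimes F$ is trivial.
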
 

\begin{proof} The first part of the argument follows \cite[Lemma 2.33]{BFM}, we outline it for convenience.  
Let $U$ be a scalloped region, and consider the finite index subgroup of its stabilizer that preserves each of the four families of boundary leaves. 
Considering the action on the families  $l^{1,s}_k$, and $l^{1,u}_k$, each $g$ sends $l^{1,s}_k$ to some $l^{1,s}_{k+m}$ (where $m$ depends only on $g$).  This gives a homomorphism from this group to $\bZ^2$, whose image is a rank 2 subgroup (so isomorphic to $\bZ^2$) since Proposition \ref{prop:nonsep_implies_fixed} shows that some nontrivial elements act trivially on each factor.  
Now apply Proposition \ref{lem:Z2_invariant_chains}, to find the Birkhoff torus or Klein bottle $S$.
\end{proof}

The next lemma says that, after isotopy, one may upgrade such a scalloped surface to be transverse to the flow.  

\begin{lemma}  \label{lem:scalloped_isotopic}
Suppose that $S$ is a scalloped surface in $M$.  Then $S$ is isotopic to 
a surface transverse to the flow.   This surface is unique up to isotopy along flowlines.  
\end{lemma}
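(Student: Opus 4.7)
The plan is to use Proposition \ref{prop:scalloped_projection} to realize $\wt S$ as projecting onto a scalloped region $U\subset\orb_\phi$, and then to exploit the trivial bifoliation inside $U$ to build a $\pi_1(S)$-equivariant section of the flow.

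Concretely, by Proposition \ref{prop:scalloped_projection} we first perform a preliminary homotopy so that a chosen lift $\wt S\subset\wt M$ projects (via the orbit projection $\pi\colon\wt M\to\orb_\phi$) onto a scalloped region $U$, whose stabilizer in $\pi_1(M)$ contains $\pi_1(S)$ with finite index (Lemma \ref{lem_scalloped_region_to_torus}). By property (iv) of Definition \ref{def_scalloped_region} the region $U$ is homeomorphic to $\bR^2$ and contains no singular orbits, so its preimage $\wt U:=\pi^{-1}(U)$ is flow-saturated and $\pi|_{\wt U}\colon\wt U\to U$ is a principal $\bR$-bundle, with the $\bR$-action given by $\wt\phi$. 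Since $U$ is simply connected this bundle is trivial, giving $\wt U\cong U\times\bR$ with $\wt\phi$ acting by translation in the $\bR$-factor. A surface transverse to $\phi$ whose lift projects onto $U$ is then precisely (the image of) a continuous section of this bundle equivariant under the $\pi_1(S)$-action.

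To produce such an equivariant section, I would first verify that the induced $\pi_1(S)$-action on $U$ is free and properly discontinuous. Proper discontinuity is inherited from the $\pi_1(M)$-action on $\wt M$. Freeness follows from the explicit description of generators in Proposition \ref{prop:scalloped_projection}: any fixed point of a nontrivial element of $\pi_1(S)$ in $\orb_\phi$ must be a corner of a preserved lozenge of $U$, but all corners of lozenges of $U$ lie on $\partial U$. Hence $\wt U/\pi_1(S)\to U/\pi_1(S)$ is a genuine $\bR$-bundle over a surface, and since $\bR$ is contractible such a bundle admits a global section. Lifting produces a $\pi_1(S)$-equivariant section $\sigma\colon U\to\wt U$ whose image $\wt T$ descends to a surface $T\subset M$ transverse to $\phi$ and realizing the same $\pi_1$-image as $S$.

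Finally I would argue that $T$ is isotopic to $S$ and that the transverse surface is unique up to flow-line isotopy. For isotopy to $S$: both are $\pi_1$-injective surfaces in the irreducible manifold $M$ whose lifts project to the same scalloped region, so they are homotopic, and Waldhausen's theorem (together with the standard double-cover trick in the $1$-sided case for Klein bottles) upgrades homotopy to isotopy. For uniqueness, if $T_1,T_2$ are transverse surfaces both isotopic to $S$, their lifts in $\wt U$ are equivariant sections of the trivial bundle and so differ along each flow orbit by a $\pi_1(S)$-invariant continuous time-shift $\tau\colon U\to\bR$; pushing $\wt T_1$ by $s\tau$ along flowlines for $s\in[0,1]$ gives an equivariant isotopy to $\wt T_2$ which descends to the desired isotopy along flowlines in $M$. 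The main subtlety will be verifying the freeness of the $\pi_1(S)$-action on $U$, so that the quotient $U/\pi_1(S)$ is an honest surface and obstruction theory applies cleanly; this is where the explicit structure of generators in Proposition \ref{prop:scalloped_projection} is essential.
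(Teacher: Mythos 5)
Your construction of a $\pi_1(S)$-equivariant section of $\pi^{-1}(U)\to U$ is sound, and your argument that $\pi_1(S)$ acts freely on the open region $U$ is correct (interior fixed points of elements of $\pi_1(S)$ would have to be corners of lozenges, which lie only on $\partial U$). The essential gap is the phrase ``whose image $\wt T$ descends to a surface $T\subset M$.'' This is not automatic, and in fact is where the real work lies. The image $T$ of $\wt T$ in $M$ is a priori only an \emph{immersed} surface: if the stabilizer of $U$ in $\pi_1(M)$ strictly contains $\pi_1(S)$, or if some $\alpha\in\pi_1(M)\smallsetminus\pi_1(S)$ has $\alpha(\wt T)\cap\wt T\neq\emptyset$ with $\alpha(\wt T)\neq\wt T$, then $T$ has self-intersections or multiplicities. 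Since Waldhausen's isotopy theorem requires an \emph{embedded} surface, your final step breaks down unless embeddedness is established first.

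The paper devotes most of its proof to precisely this point. It first shows, via a covering-space argument (passing to the cover $M_1$ associated to the stabilizer $G$ of $U$ and applying Waldhausen and Hempel there), that $\pi_1(S)$ is in fact the \emph{full} stabilizer of $U$, not merely finite index in it. It then puts the immersed transverse surface $C$ in general position, removes null-homotopic self-intersection curves following Barbot, and rules out non-null-homotopic self-intersection curves by showing any such curve would produce an element $\alpha\notin\pi_1(S)$ with $\alpha(U)=U$, contradicting $\pi_1(S)=G$. (The case analysis invokes Lemma \ref{lem:intersection_scalloped}.) Only then is the surface embedded and Waldhausen applicable. Your bundle-theoretic packaging of the construction step is clean and a reasonable alternative to the paper's description of $V\cong C'\times\R$, but without an argument for embeddedness the proof is incomplete; similarly your uniqueness argument implicitly assumes both transverse representatives are embedded and have lifts projecting onto the same scalloped region, which again relies on the stabilizer computation.
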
 
This is proved in \cite{Barbot_MPOT} when $M$ is orientable and contains no one-sided Klein bottles.  We give a proof that applies in the general setting. 

\begin{proof} 
Let $S$ be a scalloped surface.  
By Proposition \ref{prop:scalloped_projection}, up to a homotopy
of $S$, there is a lift $\wt{S}$ of $S$ to $\wt{M}$ that projects to a scalloped region in $\orb_\phi$.  Let $U \subset \wt{M}$ be the union of orbits in this scalloped region; this is fixed by $\pi_1(S)$ and the quotient of $U$ by $\pi_1(S)$ is homotopic to $S$.   
Let $G$ be the stabilizer of $U$ in $\pi_1(M)$.  Then (with appropriate choice of basepoint) $\pi_1(S) \subset G$, and is a subgroup of finite index by Lemma \ref{lem_scalloped_region_to_torus}.  
We will show that in fact $G = \pi_1(S)$, by using some 3-manifold topology.  
 
Let $M_1$ be the (infinite) cover of $M$ associated with $G$.  
Let $\psi$ be the flow lifted to $M_1$ and let $V$ be the projection of $U \subset \wt M$ to $M_1$. When restricted
to $V$, the flow $\psi$ is a product flow with
orbit space a compact surface $C'$ with fundamental group
isomorphic to $G$.
In other words, $V$ is homeomorphic to $C' \times \R$, with flowlines of $\psi$ giving the product decomposition, 
$\pi_1(C') = G$, and there is an embedded copy of $C'$ transverse to the flow. 
Since $\pi_1(S) \subset G$ with finite index, we have that $S$ lifts to an embedded compact surface 
$S’$ in $M_1$. 
The surface $S'$ is homotopic in $M_1$ to an embedded surface in $V$ (which carries all the topology
of $M_1$). Since $M_1$ is irreducible and Haken, $S'$  is isotopic into $V \cong C' \times \R$, by a classical result of Waldhausen \cite{Waldhausen}. We further get that $S'$ is isotopic into $C' \times [-1,1]$.
Theorem 10.5 of \cite{Hempel} now implies that $S’$ is  isotopic to $C’$, which shows that $\pi_1(S)$ is in fact equal to $G$.

\vskip .05in
 The next step is to show that the projection of $C'$ to $M$ can be homotoped to an embedded surface.  
Denote the projection of $C'$ by $C$. Since $\pi_1(S) = G$, $C$ is homotopic to $S$ in $M$. 
Put $C$ in general position moving points slightly along flow lines.  If it is already embedded, we are done; otherwise
consider the
finitely many curves of self intersection of $C$. 
As in the proof of \cite[Theorem B]{Barbot_MPOT}, one
may apply an isotopy to remove any null-homotopic curves of self-intersection, by sliding part of
the immersed surface $C$ while keeping the rest fixed.
This is done carefully in lemmas 7.11, 7.12 and 7.13, pages
144--147 of \cite{Barbot_MPOT}.

Now we show that $C$ cannot have any non null-homotopic curves of self-intersection.  Assume for contradiction that
 $\tau$  is such a curve in $C$.  
 Think of $C$ as the image of an immersion
$f\colon L \to M$ where $L$ is a closed surface, and
let $\tau^* \subset L$ be a 
simple closed curve in $L$ with $f(\tau^*) = \tau$.   
We will use this to produce a curve $\alpha \in \pi_1(M) \setminus \pi_1(S)$, with the property 
that $\alpha(U) = U$. This contradicts the fact proven earlier that $\pi_1(S)$ is the stabilizer of $U$.   

A priori, the restrictions of the map $f$ to $\tau^*$ may or may not be injective, and we treat these cases separately.
 
\begin{case}[$f\colon \tau^* \to \tau$ is not injective] 
Since $f$ is locally injective, we may then find a sub-arc of $\tau^*$
where $f$ is injective on the interior but whose endpoints are mapped to the same point in $M$. 
Thus the image is a 
closed curve $\alpha$ in $M$, contained in $C$. Considering
$\alpha$ as a deck transformation, we have $\alpha(U) \cap U 
\not = \emptyset$.  By Lemma \ref{lem:intersection_scalloped}, 
either $\alpha(U) = U$, or $\alpha$ has a unique fixed point in $U$.  
Since $f\colon \tau^* \to \tau$ is a finite cover, it follows
that some positive power $\alpha^n$ is the image of an 
element in $\pi_1(L)$, i.e., in $\pi_1(S)$.  This means that $\alpha^n(U) = U$, which is 
impossible since $\alpha$ has a unique fixed point inside $U$. 
\end{case} 
  
\begin{case}[$f\colon \tau^* \to \tau$ is injective] 
We now assume that for \emph{any} self intersection
curve $\tau$ of $C$ and preimage $\tau^*$ under $f$, then the restriction 
$f\colon \tau^* \to \tau$ is injective.
Since $\tau$ is a curve of self intersection of $C$, it
follows that $f^{-1}(\tau)$ contains another component $\beta^*$
besides $\tau^*$. There is an arc $\alpha^*$ in $L$ from
$\tau^*$ to $\beta^*$. We can adjust the endpoint in $\beta^*$ so
that $\alpha := f(\alpha^*)$ is a closed curve in $M$.
As in the first case, $\alpha(U) \cap U \neq \emptyset$.

There are three possibilities in this situation: 
\begin{enumerate}
\item  The curves $\tau^*, \beta^*$ are both two sided in $L$. Then $\tau^*, \beta^*$ are isotopic in $L$, and
applying $f$ gives $\alpha^{-1} \tau \alpha = 
\tau^{\pm 1}$
\item The curves $\tau^*, \beta^*$ are both one sided in $L$ (so $L$ is necessarily a Klein bottle). Then the squares of these curves are homotopic to
curves in $L$ which are isotopic to each other. Consequently, 
$\alpha^{-1} \tau^2 \alpha
= \tau^{\pm 2}$; 
\item Up to switching $\tau^*, \beta^*$, we have that $\tau^*$ is
one sided and $\beta^*$ is two sided in $L$. 
Consequently, $\alpha^{-1} \tau^2 \alpha
= \tau^{\pm 1}$. 
\end{enumerate} 

Since $\tau(U) = U$, the equations in possibilities (1)--(3) above imply that  either $\tau \alpha(U) = \alpha(U)$ or 
$\tau^2 \alpha(U) = \alpha(U)$. Thus we always have $\tau^2 \alpha(U) = \alpha(U)$.
Suppose now that $\alpha(U) \neq U$. 
By Lemma~\ref{lem:intersection_scalloped}, there exist lozenges $L, L'$ of $U$ and $\alpha(U)$ respectively, 
such that $\alpha(U) \cap U = L \cap L'$ and $L\neq L'$.
Thus, $\tau^2$ fixes both $U$ and $\alpha(U)$, and it therefore  
fixes the four points of intersection of the sides of $L$ and $L'$. This is a contradiction since $\tau$ is nontrivial. 

Once again, we conclude that $\alpha(U) = U$, so $\alpha$ is
in $\pi_1(S)$ and does not produce self intersections of $C$.
\end{case} 
This shows that $C$ is embedded. As $C$ is embedded and 
homotopic to $S$, applying Waldhausen's theorem 
\cite{Waldhausen}, gives that $S$ is isotopic to $C$ as desired.

The proof of uniqueness of the surface $C$ follows exactly as in the argument in 
\cite{Barbot_MPOT} 
\end{proof}

Building on Lemma \ref{lem:scalloped_isotopic}, the next proposition says that we have good representatives for both scalloped and non-scalloped cutting surfaces.  

\begin{proposition}[Barbot, Fenley]\label{prop:good_representative} 
Let $S$ be a cutting  surface of the JSJ decomposition of $M$. Then $S$ is homotopic to a weakly embedded Birkhoff surface, and 
\begin{enumerate} 
\item  If $S$ is scalloped, then it may be isotoped to a surface transverse to the flow, which is unique up to isotopy along flow lines.  
\item  If $S$ is not scalloped then the Birkhoff surface is not
unique up to homotopy along flow lines, 
but $\pi_1(S)$ leaves invariant a unique minimal bi-infinite chain of lozenges $\cC$, and for any Birkhoff surfaces $S$, the unique lift $\wt{S}$ stabilized by $\pi_1(S)$ projects to $\cC$. 
\end{enumerate} 
We call the transverse surface (in the first case), or the weakly embedded Birkhoff surface (in the second) a {\em good representative} for the cutting surface. 
\end{proposition}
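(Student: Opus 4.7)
The plan is to build on the results just established and cite the Barbot--Fenley machinery for the remaining ingredients. First I would establish the existence of a weakly embedded Birkhoff surface representative. Given a cutting surface $S$ of the JSJ decomposition, $S$ is $\pi_1$-injective by definition, so its fundamental group acts faithfully on $\orb_\phi$ preserving the bifoliation. Since $S$ is adjacent to at least one Seifert piece (a cutting surface being in the boundary of two JSJ pieces, at least one of which is Seifert-fibered by hypothesis that $S$ is not a fiber in a hyperbolic piece -- otherwise we are in the scalloped case handled below), some nontrivial element $g \in \pi_1(S)$ is freely homotopic to a periodic orbit. Then $g$ has fixed points in $\orb_\phi$, and combining Propositions~\ref{prop:fixed_chain} and~\ref{prop:nonsep_implies_fixed} with the analysis of $\pi_1(S)$-invariant subsets in \cite{BarbFen_pA_toroidal}, one obtains a $\pi_1(S)$-invariant chain of lozenges $\cC$. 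Applying Proposition~\ref{lem:Z2_invariant_chains} to a minimal such $\cC$ produces the desired $\pi_1$-immersed torus or Klein bottle, and the arguments of \cite{BarbFen_pA_toroidal} upgrade this to a weakly embedded Birkhoff representative homotopic to $S$.

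For case (1), the conclusion is immediate from Lemma~\ref{lem:scalloped_isotopic}: a scalloped cutting surface is in particular a scalloped surface in the sense of Definition~\ref{def:scalloped}, and the lemma gives both the isotopy to a transverse surface and its uniqueness up to isotopy along flow lines.

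For case (2), I would argue as follows. Since $S$ is not scalloped, no lift $\wt{S}$ projects to a scalloped region, so by Proposition~\ref{prop:scalloped_projection} (contrapositive) together with the structural result~\cite[Theorem 5.2]{Fenley_structure_branching} that any bi-infinite line of lozenges sits inside a scalloped region, no $\pi_1(S)$-invariant chain can contain a full bi-infinite line on which $\pi_1(S)$ acts as in the scalloped case. To produce the canonical minimal invariant chain $\cC$, consider the set $F$ of fixed points in $\orb_\phi$ of nontrivial elements of $\pi_1(S)$ that are homotopic to periodic orbits of $\phi$. By Proposition~\ref{prop:fixed_chain}, any two such points are corners of a chain of lozenges, and all of these assemble into a single $\pi_1(S)$-invariant chain $\cC$. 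Minimality follows because any $\pi_1(S)$-invariant chain must contain $F$ as corners and hence all the connecting lozenges. Uniqueness of $\cC$ then forces the stabilized lift $\wt{S}$ of any Birkhoff representative to project onto $\cC$ by Proposition~\ref{lem:Z2_invariant_chains}. Non-uniqueness of the Birkhoff representative up to flow-line homotopy is a well-known phenomenon: in the non-scalloped case one has freedom in choosing which periodic orbits at the corners of $\cC$ serve as boundary of Birkhoff annuli and in how the annuli are assembled, and distinct choices generally cannot be isotoped to each other along flow lines.

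The main obstacle I expect is the existence of a weakly embedded Birkhoff surface representative in case (2): while this is essentially contained in \cite{BarbFen_pA_toroidal}, transferring their orientable-manifold statements to our setting (which allows Klein bottle cutting surfaces and one-sided surfaces) requires checking that the arguments go through without orientability hypotheses. I would handle this the same way as in the proof of Lemma~\ref{lem:scalloped_isotopic}, lifting first to a finite cover where the surface becomes two-sided, applying the orientable version, and then descending, using Waldhausen's theorem~\cite{Waldhausen} together with irreducibility of $M$ to preserve embeddedness.
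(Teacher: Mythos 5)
Your outline has the right general shape — case (1) via Lemma~\ref{lem:scalloped_isotopic}, case (2) via the Barbot--Fenley chain-of-lozenges machinery together with Proposition~\ref{lem:Z2_invariant_chains}, and non-orientability handled by passing to a cover — but the key technical step for case (2) has a genuine gap. You propose to build the minimal invariant chain $\cC$ by taking the set $F$ of fixed points of nontrivial elements of $\pi_1(S)$ and joining them via Proposition~\ref{prop:fixed_chain}. That proposition only produces a connecting chain for two fixed points of the \emph{same} element $g$; fixed points of two distinct commuting elements $g_1,g_2 \in \pi_1(S)$ are not a priori corners of a common chain, so "all of these assemble into a single $\pi_1(S)$-invariant chain" does not follow. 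Nor is it clear that any $\pi_1(S)$-invariant chain must contain every point of $F$ as a corner, so the claimed minimality and uniqueness are unsupported. The paper instead invokes two specific results you do not mention: \cite[Lemma~5.3]{BarbFen_pA_toroidal}, which says a $\bZ^2$ subgroup of $\pi_1(M)$ always preserves a chain of lozenges (nontrivial, and not just a consequence of assembling fixed points), and \cite[Lemma~5.5]{BarbFen_pA_toroidal}, which says that if there were two distinct minimal invariant chains then $S$ would be scalloped — this is exactly how uniqueness is obtained in the non-scalloped case. You need these to make the argument rigorous; a contrapositive of Proposition~\ref{prop:scalloped_projection} plus \cite[Theorem~5.2]{Fenley_structure_branching} rules out scalloped regions but does not by itself produce or uniquify $\cC$.

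A second, smaller gap: you assert that a weakly embedded Birkhoff representative follows once you have $\cC$ and cite "the arguments of \cite{BarbFen_pA_toroidal}," but the specific missing ingredient is that $\cC$ must be shown to be \emph{simple} (no corner has a $\pi_1(M)$-orbit landing in the interior of a lozenge of $\cC$); this is Step~1 of the proof of \cite[Theorem~6.10]{BarbFen_pA_toroidal} and uses embeddedness of $S$. Only then does \cite[Proposition~6.7]{BarbFen_pA_toroidal} produce a weakly embedded Birkhoff torus. Finally, the aside about $S$ being "adjacent to at least one Seifert piece, otherwise we are in the scalloped case" is not how the dichotomy works and isn't needed: the paper's argument via $\pi_1(S)\cong\bZ^2$ and Lemma~5.3 applies to every cutting surface regardless of the nature of the adjacent pieces. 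For the non-orientable case your instinct (pass to the orientable double cover) matches the paper, but the thing that actually needs checking there is that the lift $\hat S$ is again a \emph{cutting} surface of $\hat M$, not an embeddedness issue to be saved by Waldhausen.
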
 

Given Lemma \ref{lem:scalloped_isotopic}, it remains only to prove statement (2).  This is proved in \cite{BarbFen_pA_toroidal}, but is not completely transparent since it occurs within the proof of Theorem 6.10.   We give an outline and further references, and provide the necessary details to treat the possibly non-orientable setting.  

\begin{proof}
Item (1) has already been proved in Lemma \ref{lem:scalloped_isotopic}.  So, we suppose that $S$ is not scalloped.   
We assume first that $M$ is orientable.  
Let $S$ be a cutting surface of the JSJ decomposition of $M$, since we are in the orientable setting $\pi_1(S) = \bZ^2$ and therefore by \cite[Lemma 5.3]{BarbFen_pA_toroidal} it preserves a chain of lozenges.  (If there are two distinct such chains, then \cite[Lemma 5.5]{BarbFen_pA_toroidal} states that $S$ must be scalloped.)

Let $\cC$ be a minimal (with respect to inclusion) $\pi_1(S)$-invariant chain of lozenges. 
Since $S$ is an embedded torus, Step 1 of the proof of Theorem 6.10 of \cite{BarbFen_pA_toroidal} shows that no corner of $\cC$ has any point of its orbit under $\pi_1(M)$ in the interior of any lozenge of $\cC$.  In the language of \cite{BarbFen_pA_toroidal}, $\cC$ is \emph{simple} (see also \cite[Corollary 6.11]{BarbFen_pA_toroidal}).  Thus by \cite[Proposition 6.7]{BarbFen_pA_toroidal}, $\cC$ corresponds to a weakly embedded Birkhoff torus that is homotopic to $S$. More precisely $\cC$ is the projection to the orbit space of the unique lift $\wt{S}$ of $S$ stabilized by $\pi_1(S)$.    This proves item (2) in the case where $M$ is orientable.  


If $M$ is non-orientable,  one may pass to the orientable double cover $\hat M$ of $M$.  A cutting surface $S$ in $M$ will lift to an incompressible torus $\hat{S}$ in $\hat{M}$. We only need to justify that $\hat S$ is also a cutting surface of $\hat M$. This is by definition if at least one side of $\hat S$ is atoroidal.
If both sides of it are Seifert fibered then these project down to necessarily distinct Seifert fibered pieces of $M$ (since $S$ is a cutting surface), so the fibers on either sides are different homotopy classes in the torus; hence $\hat{S}$ is a cutting surface for $\hat M$ and we may apply the argument above to find a good representative in $\hat {M}$ and then project back down to $M$.
\end{proof}


Good representatives for cutting surfaces can be obtained simultaneously, as follows. 
\begin{proposition}[Good form for cutting surfaces \cite{Barbot_MPOT}, Th\'eor\`eme E] \label{prop:good_form}
Given any number of distinct cutting surfaces, one can choose good representatives that are either disjoint, or have pairwise intersections only along periodic orbits of Birkhoff surfaces.  Moreover, when two such Birkhoff surfaces intersect, they do not cross each other. 
\end{proposition}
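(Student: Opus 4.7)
The plan is to start with the good representatives $S_1,\ldots,S_n$ provided by Proposition~\ref{prop:good_representative} for each cutting surface individually, then argue pair by pair in the orbit space before translating the conclusion back to $M$.

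Fix two cutting surfaces $S$, $S'$ with good representatives, and pick lifts $\wt S$, $\wt S'$ stabilized by $\pi_1(S)$, $\pi_1(S')$ respectively. By Proposition~\ref{prop:good_representative}, the image $\pi(\wt S)$ in $\orb_\phi$ is either a scalloped region (if $S$ is scalloped) or a minimal $\pi_1(S)$-invariant chain of lozenges, and similarly for $\wt S'$. Since the projection $S\cap S'$ in $M$ is detected in $\wt M$ by the intersections $\wt S \cap \gamma \wt S'$ for $\gamma$ ranging over $\pi_1(M)$, it suffices to analyze the intersection pattern of $\pi(\wt S)$ with each translate $\gamma\, \pi(\wt S')$ in $\orb_\phi$. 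The key input is that this intersection is highly constrained: for two distinct scalloped regions the intersection is contained in a single pair of lozenges by Lemma~\ref{lem:intersection_scalloped}; for two chains of lozenges the intersection is similarly contained in at most finitely many lozenges (one argues, as in the proof of Lemma~\ref{lem:intersection_scalloped}, using the product structure that a leaf boundary of one must cross the other along a bounded collection of leaves); and the mixed case combines the two.

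With this dictionary in hand, I would carry out the following reduction. First, if $\pi(\wt S)$ and $\gamma\, \pi(\wt S')$ are \emph{disjoint}, then the transverse-surface case of Proposition~\ref{prop:good_representative}(1) (or the weakly embedded Birkhoff-surface case of (2)) together with the product structure on the relevant regions of $\wt M$ allows a small isotopy along flow lines removing the corresponding component of $\wt S \cap \gamma \wt S'$. Second, if the two regions in $\orb_\phi$ share only corners of lozenges, then those corners correspond to common periodic orbits of $\phi$, which are boundary orbits of the Birkhoff annuli making up $S$ and $S'$ (or limits of flow lines through the transverse surface); after a flowline isotopy one can arrange the surfaces to meet exactly along these periodic orbits. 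Third, if $\pi(\wt S)$ and $\gamma\, \pi(\wt S')$ overlap on the interior of a lozenge without sharing its corners, then the portion of $S$ (resp.\ $S'$) corresponding to this overlap is either already in good position (in the scalloped case, by uniqueness up to flowline isotopy in Proposition~\ref{prop:good_representative}(1)) or can be slid along flow lines out of the overlap, using that a weakly embedded Birkhoff annulus can be pushed off the interior of any scalloped region that does not contain its boundary periodic orbits.

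For the second conclusion, that two Birkhoff surfaces do not \emph{cross} at a shared periodic orbit $\gamma$, I would argue as follows. Lift $\gamma$ to a corner $p$ of lozenges in both $\pi(\wt S)$ and $\pi(\wt S')$. Each surface contributes a pair of half-leaves of $\cF^s$ and $\cF^u$ emanating from $p$ as sides of adjacent lozenges; the local picture near $\gamma$ is controlled by which of the four quadrants at $p$ lie in each of $\pi(\wt S)$ and $\pi(\wt S')$. Because both chains are invariant under elements of $\pi_1(M)$ fixing $p$ (which is necessarily contained in the cyclic group generated by the periodic orbit through $p$), the two chains either agree on the pair of quadrants they use or occupy complementary pairs, and in either case the corresponding Birkhoff annuli abut $\gamma$ from compatible sides rather than transversally passing through it. This rules out crossings and gives the desired local picture.

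The main obstacle I anticipate is making the three isotopy steps above compatible for \emph{all} pairs simultaneously: a flowline isotopy that removes an extraneous intersection between $S_i$ and $S_j$ must not reintroduce intersections between $S_i$ and some $S_k$. I expect to handle this by ordering the pairs (for instance, by the combinatorial distance of the corresponding lozenges or scalloped regions in a fundamental domain for the $\pi_1(M)$-action on $\orb_\phi$) and applying the isotopies supported in progressively smaller neighborhoods of the offending intersections, exactly as in the proof of Th\'eor\`eme~E of \cite{Barbot_MPOT}; the non-orientable case is handled, as in the proof of Proposition~\ref{prop:good_representative}, by lifting to the orientable double cover and pushing the resulting equivariant configuration down.
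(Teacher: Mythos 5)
The paper does not give a self-contained proof of this proposition: it cites Barbot's Th\'eor\`eme E of \cite{Barbot_MPOT} for the orientable case and observes that, having established the non-orientable version of Proposition~\ref{prop:good_representative}, the same proof carries over. Your proposal is an attempt at a from-scratch argument, which is a genuinely different (and much larger) undertaking than what the paper actually does.

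As a sketch of that larger argument, the proposal has a couple of concrete problems. The most basic one is a conflation of ``the projections $\pi(\wt S)$, $\gamma\pi(\wt S')$ to $\orb_\phi$ are disjoint'' with ``the surfaces $\wt S$, $\gamma\wt S'$ are disjoint in $\wt M$.'' These are not the same thing, and the implication goes in one direction only: if the projections are disjoint, then no orbit meets both surfaces, so the surfaces are automatically disjoint and your ``first case'' requires no isotopy at all. Conversely, if the projections do overlap, that only means some orbit passes through both surfaces; it says nothing yet about whether the surfaces themselves intersect. The whole content of the proposition is exactly in arranging the surfaces by flowline isotopy so that, wherever the projections overlap, the overlap is realized either without surface intersection or only along shared periodic orbits, and your case division doesn't cleanly isolate this.

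A second gap is the claim that the intersection of two minimal $\pi_1(S)$- and $\pi_1(S')$-invariant chains of lozenges is controlled ``as in the proof of Lemma~\ref{lem:intersection_scalloped}.'' That lemma leans essentially on the global product structure of a scalloped region; a general chain of lozenges has no such product structure, so the argument does not transfer. Controlling how two minimal invariant chains can overlap (the statement one actually needs is that two such chains, or a chain and a scalloped region, can share only corners or full lozenges, not interiors of lozenges from only one side) requires a different argument; this is part of what Barbot's Th\'eor\`eme E actually provides and is where most of the real work is. Finally, the non-crossing argument via ``complementary pairs of quadrants'' is asserted rather than established; the constraint you want comes from the fact that a weakly embedded Birkhoff annulus corresponds to a lozenge whose interior contains no image of a corner under $\pi_1(M)$, and from the simplicity of the relevant chains (cf.\ the use of \cite[Step~1, Thm.~6.10]{BarbFen_pA_toroidal} in the proof of Proposition~\ref{prop:good_representative}), not just from the quadrant combinatorics at a single corner. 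If you wish to keep this as a self-contained proof rather than a citation, those are the points that need actual arguments; otherwise, simply invoking Th\'eor\`eme~E together with Proposition~\ref{prop:good_representative}, as the paper does, is both correct and much shorter.
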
 
While Barbot proves this in the setting where $M$ is orientable, the proof carries through in general, given our Proposition \ref{prop:good_representative} above.

Using this, we make the following definition which we will use frequently in the next section.  

\begin{definition} 
Suppose $P$ is a piece of the JSJ decomposition of $M$.  By Proposition \ref{prop:good_form} one may homotope each boundary surface to a good representative.  We call the resulting set (obtained from $P$ by this operation on boundary components) the {\em good representative} for $P$.  
\end{definition} 
Note that the good representative may not be a submanifold, because certain periodic orbits in boundary components may end up coinciding after homotopy -- this is also why homotopy rather than isotopy is needed in the definition.  

We record for future reference the following useful property of good representatives. 
\begin{lemma}[See Theorem B of \cite{BarbFen_pA_toroidal}] \label{lem:orbits_in_P}
If $P$ is the good representative of a periodic Seifert piece, then the only orbits contained in $P$ are periodic and have powers agreeing with a regular fiber.   
\end{lemma}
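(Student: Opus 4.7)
The plan is to work in the orbit space $\orb_\phi$ and reduce the statement to a structural result about periodic Seifert pieces due to Barbot-Fenley. The rough picture is: the portion of $\orb_\phi$ corresponding to a lift of $P$ is ``filled'' by corners of lozenges fixed by the fiber element, and any point that is not such a corner lies in a boundary scalloped region or chain of lozenges, which would force the associated orbit to exit $P$.

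First, I would fix an orbit $\gamma \subset P$, choose a lift $\wt P \subset \wt M$ of the good representative, and a lift $\wt\gamma \subset \wt P$. Let $x = \pi(\wt\gamma) \in \orb_\phi$ and let $h \in \pi_1(P)$ denote the fiber element, which is represented by a periodic orbit by the definition of periodic Seifert piece. The boundary surfaces $S_i$ of $P$ lift to surfaces $\wt{S_i}$ in $\wt M$; by Propositions~\ref{prop:scalloped_projection}, \ref{lem:Z2_invariant_chains}, and \ref{prop:good_representative}, each projects to either a scalloped region (if $S_i$ is scalloped) or a chain of lozenges (otherwise), stabilized by a conjugate of $\pi_1(S_i)$. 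Since $h$ lies in $\pi_1(S_i)$ for every boundary surface $S_i$, each of these boundary regions is $h$-invariant and has all its corners fixed by $h$.

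The key step invokes Theorem B of \cite{BarbFen_pA_toroidal}, which analyzes the structure of the region $\Omega := \pi(\wt P) \subset \orb_\phi$: every point of $\Omega$ is either a corner of an $h$-invariant lozenge, or else lies in one of the boundary scalloped regions or chains of lozenges described above. In the latter case, the corresponding orbit in $\wt M$ crosses some $\wt{S_i}$ transversely (or along a periodic orbit contained in $\wt{S_i}$), which descends to $\gamma$ exiting $P$ through $S_i$ in finite time, contradicting $\gamma \subset P$. Hence $x$ must be a corner of an $h$-invariant lozenge, which means $\wt\gamma$ is preserved as a set by some nontrivial power $h^n$; equivalently, $\gamma$ is periodic and some power of $\gamma$ is freely homotopic to a power of the fiber.

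The main obstacle is checking that Theorem B of \cite{BarbFen_pA_toroidal} applies in our full generality, since it is stated for orientable manifolds and Anosov flows. The pseudo-Anosov case presents no essential difficulty, as the arguments use only the lozenge structure that is available in both settings. For the non-orientable case, a standard tactic is to pass to the orientable double cover $\hat M$, where $P$ lifts either to two copies (if $P$ is orientable) or to a connected double cover $\hat P$ (if it is not); in either case $\hat P$ is a periodic Seifert piece for the lifted flow, the fiber lifts to (a power of) the fiber, and the conclusion descends to $M$.
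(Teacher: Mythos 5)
The paper's proof is essentially a citation: it says the statement is proved in Section~7 of \cite{BarbFen_pA_toroidal} for orientable manifolds and that the general case follows by passing to the orientable double cover. Your proposal takes the same overall approach --- defer to Barbot--Fenley for the orientable case and reduce the non-orientable case to it via the double cover --- while also attempting to reconstruct the orbit-space argument that Barbot--Fenley carry out.

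One caveat on the reconstruction: the dichotomy you attribute to Theorem~B (``every point of $\Omega = \pi(\wt P)$ is either a corner of an $h$-invariant lozenge, or lies in a boundary scalloped region / chain'') is not the statement of Theorem~B, which is the existence of spines. It is rather a consequence of the way the spine and the good boundary representatives together fill out $\wt P$, and establishing it carefully is the content of Section~7 of \cite{BarbFen_pA_toroidal}, which is what the paper actually cites. As written, you are using this dichotomy to derive the lemma, so you should be careful that the dichotomy does not secretly presuppose it; one needs the structural fact that $\Omega$ is exactly the closure of the chain of $h$-invariant lozenges and that interior (non-corner, non-side) points of that chain correspond to orbits transverse to a Birkhoff annulus of the spine, hence exiting $P$. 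Modulo that attribution issue, the logic and the double-cover reduction are sound and match the paper's intent.
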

This is proved in Section 7 of \cite{BarbFen_pA_toroidal} for orientable manifolds, but holds generally by passing to the orientable cover.

\begin{lemma}\label{lem:scalloped_tori_are_JSJ}
Any embedded scalloped surface is homotopic to a 
cutting surface of the JSJ decomposition of $M$.
\end{lemma}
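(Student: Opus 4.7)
The plan is to combine Lemma \ref{lem:scalloped_isotopic}, the JSJ decomposition, and the structure of periodic Seifert pieces to force any embedded scalloped $S$ to be parallel to a cutting surface. First, by Lemma \ref{lem:scalloped_isotopic} and Proposition \ref{prop:scalloped_projection}, I would isotope $S$ to be transverse to $\phi$ so that its $\pi_1(S)$-invariant lift $\wt S \subset \wt M$ projects to a scalloped region $U \subset \orb_\phi$; the argument inside the proof of Lemma \ref{lem:scalloped_isotopic} shows that the full stabilizer of $U$ in $\pi_1(M)$ is exactly $\pi_1(S)$.

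Since $S$ is an embedded, $\pi_1$-injective torus or Klein bottle in the irreducible Haken manifold $M$, the (possibly non-orientable) JSJ decomposition implies that either $S$ is isotopic to a cutting surface of the JSJ -- in which case we are done -- or $S$ can be isotoped into a single Seifert piece $P$, where it is either a vertical or a horizontal surface. The horizontal case is ruled out by Remark \ref{rem:PneqM}: a horizontal $S$ would force $P$ to be a closed Seifert manifold (or a product $T^2 \times I$, in which case $S$ is boundary-parallel anyway), either outcome contradicting the presence of a scalloped region in a manifold supporting a pseudo-Anosov flow. For the vertical case, I write $\pi_1(S) = \langle f, h\rangle$ with $f$ the regular Seifert fiber of $P$. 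Since $f \in \pi_1(S)$ stabilizes $U$ and, by Proposition \ref{prop:scalloped_projection}, fixes the corners of one of the two families of lozenges of $U$, $f$ is freely homotopic to a periodic orbit and $P$ is periodic. The scalloped property further provides an element $g = f^{p} h^{q} \in \pi_1(S)$ with $q \neq 0$ that is freely homotopic to a periodic orbit $\delta$.

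The crucial step is to show that $g$, and hence $h$, is conjugate in $\pi_1(M)$ into the fundamental group of a boundary surface $T$ of $P$. If a lift of $\delta$ lay entirely in the $\pi_1(P)$-invariant lift $\wt P$, then by Lemma \ref{lem:orbits_in_P} $g$ would be conjugate in $\pi_1(P)$ to a power of the fiber $f$, and since $f$ is central this forces $g$ itself to be a power of $f$, contradicting $q \neq 0$. Thus $\delta$ must exit $P$, and the Bass--Serre structure of $\pi_1(M)$ as a graph of groups dual to the JSJ forces $g$ to be conjugate into the fundamental group of some boundary cutting surface $T$ of $P$. Up to conjugation one then has $\pi_1(S) \subseteq \pi_1(T)$ as subgroups of equal rank, so they coincide; since $S$ and $T$ are both embedded and incompressible with the same fundamental group in the irreducible manifold $M$, Waldhausen's theorem (as applied in the proof of Lemma \ref{lem:scalloped_isotopic}) then yields an isotopy from $S$ to $T$, which is a JSJ cutting surface. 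The hard part of the argument will be this Bass--Serre conjugacy step: free homotopy of $g$ to $\delta$ lives in $\pi_1(M)$ while our data on $g$ sits in $\pi_1(P)$, and this requires extra care in the non-orientable case where boundary surfaces may be Klein bottles.
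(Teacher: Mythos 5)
Your strategy -- isotope $S$ into a Seifert piece $P$, show $P$ is periodic, and then show $S$ is peripheral -- is the same broad plan as the paper, but two of your key steps conceal the actual difficulty and are not justified as stated.

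\textbf{The claim that the fiber $f$ represents a periodic orbit.} You write that $f$ stabilizes $U$ and ``by Proposition \ref{prop:scalloped_projection} fixes the corners of one of the two families of lozenges of $U$.'' That proposition only says that \emph{there exist} two independent elements $g_1, g_2 \in \pi_1(S)$ with this corner-fixing property; it does not say the Seifert fiber $f$ is one of them. A priori $f$ could be $g_1^a g_2^b$ with both $a,b \neq 0$, in which case $f$ would translate \emph{both} families, fix no corner, and represent no periodic orbit. Establishing that $P$ is periodic is in fact one of the two main points of the paper's proof: one picks a boundary component $Z \neq S$ of $P$, notes that since $S\neq Z$ there is $k \in \pi_1(Z)$ with $k^2(U) \neq U$, observes that the central fiber element $f$ then stabilizes both $U$ and the distinct scalloped region $k^2(U)$, and finally argues that this forces $f$ to have a fixed point because $k^2(U)$ meets at most two complementary components of $U$ while a free $f$-action would have infinite orbits on those components. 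None of this follows from a simple citation, and your argument has a gap here.

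\textbf{The Bass--Serre conjugacy step.} You acknowledge this is ``the hard part,'' but the assertion itself -- that $\delta$ exiting $P$ forces $g$ to be conjugate into the fundamental group of a boundary torus/Klein bottle of $P$ -- does not follow. A periodic orbit may exit $P$ by crossing a boundary cutting surface transversely without being freely homotopic into that surface, and nothing in the graph-of-groups decomposition alone rules this out; you know $g$ fixes the vertex of the Bass--Serre tree corresponding to $P$, but the relation between $g$ and edge groups is exactly what needs to be shown, not assumed. The paper handles this by a dichotomy that uses the orbit space: either (a) the periodic orbit $\gamma$ for the independent element $h$ is disjoint from (the good representative of) $P$, in which case the free homotopy from $\gamma$ to a loop in $S \subset P$ must cross $\partial P$, so $h$ is peripheral and $S$ is a cutting surface; or (b) $\gamma$ crosses some boundary surface $Z'$ transversely without being homotopic into it, in which case its lift lands in the \emph{interior} of a lozenge of the $\pi_1(Z')$-invariant chain, contradicting that this point is fixed by a nontrivial element. (The case $\gamma \subset P$ is indeed excluded by Lemma \ref{lem:orbits_in_P}, as you note.) Your Bass--Serre route would need to reproduce the content of case (b), and it is not clear how to do so purely group-theoretically.

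A minor remark: your dichotomy ``$S$ is isotopic to a cutting surface or $S$ can be isotoped into a single Seifert piece'' should also account for $S$ lying in an atoroidal piece, where it would be boundary-parallel and hence again a cutting surface; and once you place $S$ in a Seifert piece with boundary, horizontal surfaces are ruled out simply because such a piece has boundary, not via Remark \ref{rem:PneqM}. These are small points, but the two gaps above are essential.
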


\begin{proof}[Proof of Lemma \ref{lem:scalloped_tori_are_JSJ}]
Let $S$ be a scalloped surface for a pseudo-Anosov flow $\phi$ in $M$. As explained in Remark \ref{rem:PneqM}, this implies that $M$ has nontrivial JSJ decomposition.  

Since $S$ is $\pi_1$-injective, it can be isotoped to be disjoint from all cutting sufaces, so lies in some (necessarily Seifert) piece $P$.    Our goal is to show $S$ is peripheral in $P$.   Since $P$ has nonempty boundary, there are no horizontal tori or Klein bottles, so one element of $\pi_1(S)$ represents the fiber direction.  Call this $g$.  

Consider the lift $\wt S$ of $S$ to $\wt M$ which is invariant under $\pi_1(S)$, and 
let $U$ be the projection of $\wt S$ to the orbit space, i.e., the scalloped region associated to $S$. 
The group $\pi_1(S)$ stabilizes $U$, and in fact is either equal to, or finite index in, the stabilizer of $U$ (Lemma \ref{lem_scalloped_region_to_torus}).  Now choose a boundary component $Z$ of $P$.  If $S = Z$ we are done; otherwise $\pi_1(Z)$ does not virtually stabilize $U$, so contains some element $f$ such that $f^2(U) \neq U$.  

However, $f^2 g f^{-2} = g$, which does stabilize $U$, and also stabilizes $f^2(U) \neq U$.  
Thus, $g$ stabilizes two {\em distinct} scalloped regions.   We claim this means that $g$ cannot act freely on $\partial U$.  If it did act freely, then it would freely permute the connected components of the complement of $U$.  However, the structure of scalloped regions forces $f^2(U)$ to intersect at least one, and at most two, connected components of $\orb \smallsetminus U$.  Thus, $g$ cannot act freely, and hence represents a periodic orbit of the flow.  In particular, $P$ is a periodic piece.  

Since $S$ is scalloped, by definition there exists an independent element $h\in \pi_1(S)$ representing a periodic orbit of $\phi$.  Recall that $U$ can be realized in two ways as a line of lozenges, and by Proposition \ref{prop:scalloped_projection}, we may choose such an $h$ such that it 
fixes each lozenge in one line, and translates by a shift along the other.  

Let $\gamma$ be a periodic orbit of $\phi$ which is the
projection of 
one of the orbits fixed by $h$ in the boundary of $U$.  
Choose a good representative for the piece $P$ (abusing notation, we denote this also by $P$) so that all the periodic orbits in $P$ have a power freely homotopic to the fiber direction, as in Lemma \ref{lem:orbits_in_P}. Then $\gamma$ cannot be contained in $P$. 
If $\gamma$ is disjoint from $P$, then the free homotopy between $\gamma$ and a loop representing $h$ in $S \subset P$ must pass through a boundary of $P$. That is, $h$ is peripheral in $P$.  This implies that $S$ was in face a cutting surface, so we are done.  

Thus, we are left with examining the case where $\gamma$ intersects some boundary component $Z'$, but is not freely homotopic into it.  Since $P$ is assumed to be a good representative, by Proposition \ref{prop:good_representative}, $Z'$ is either scalloped and transverse or represented by a weakly embedded Birkhoff surface, thus $\gamma$ intersects it transversely.   
Let $\cC$ be the maximal $\pi_1(Z')$-invariant chain of lozenges, and $\wt \gamma$ the lift of $\gamma$ intersecting $\cC$. Since $\gamma$ intersects $Z'$ transversely and is assumed not homotopic into $Z'$, $\wt \gamma$ is not a corner but rather contained in the interior of a lozenge.  
But $h \in \pi_1(Z)$, so $\cC$ is $h$-invariant, meaning that $h$ cannot have any interior fixed point, a contradiction.  This eliminates this case and finishes the proof. 
\end{proof}

\subsection{Proof of Theorem \ref{thm_scalloped_tree_iff_scalloped_piece}}\label{sec_scalloped_periodic}

Theorem \ref{thm_scalloped_tree_iff_scalloped_piece} was the statement that for Anosov flows, trees of scalloped regions correspond to scalloped periodic Seifert pieces.  We in fact prove a more precise statement in the more general setting of pseudo-Anosov flows, as follows.  

\begin{theorem} \label{thm:tree_iff_periodic} 
Let $\phi$ be a pseudo-Anosov flow on a closed 3-manifold $M$.  
The orbit space of $\phi$ contains a tree of scalloped regions if and only if $\phi$ has a scalloped periodic Seifert piece $P$.
Moreover, each tree of scalloped regions is the maximal chain of lozenges fixed by (a conjugate of) the fiber direction of a unique scalloped periodic piece.  
\end{theorem}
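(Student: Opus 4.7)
The plan is to prove the three assertions separately: the ``if'' (backward) direction, the ``only if'' (forward) direction, and the moreover. Throughout I use that, by Remark \ref{rem:PneqM}, any Seifert piece $P$ carrying a pseudo-Anosov flow has hyperbolic base orbifold, so the fiber class is central in $\pi_1(P)$ and in particular lies in the fundamental group of every boundary component of $P$.

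\textbf{Backward direction.} Let $P$ be scalloped periodic with fiber class $g$. Since $g$ is represented by a periodic orbit, it fixes at least two points in $\orb_\phi$, and Remark \ref{rem_maximal_chain} yields a unique maximal $g$-invariant chain $\cC$. For each boundary surface $S$ of $P$ (scalloped by hypothesis, with $g \in \pi_1(S)$), Proposition \ref{prop:scalloped_projection} gives a scalloped region $U_S$; since its corners are $g$-fixed, $U_S \subset \cC$. I plan to show $\cC$ is a tree by verifying that every lozenge $L \in \cC$ lies in two distinct scalloped regions of $\cC$. Passing to a good representative of $P$ (Propositions \ref{prop:good_representative} and \ref{prop:good_form}) and invoking Lemma \ref{lem:orbits_in_P}, every periodic orbit of $\phi$ freely homotopic to $g$ lies on some boundary Birkhoff surface of $P$; by the good-form intersection structure, each such orbit is shared by at least two boundary Birkhoff surfaces, producing two distinct scalloped regions of $\cC$ through $L$.

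\textbf{Forward direction.} Let $T$ be a tree of scalloped regions. By Lemma \ref{lem:tree_fix_corners}, there is $g \in \pi_1(M)$ fixing every corner of $T$ with $T$ the maximal $g$-invariant chain. For any scalloped region $U \subset T$, Lemma \ref{lem:intersection_scalloped} forces $g$ to stabilize $U$ (it fixes at least two corners of $U$). Proposition \ref{prop:scalloped_projection} combined with Lemma \ref{lem:scalloped_tori_are_JSJ} identifies each $U$ with an embedded scalloped cutting surface $S_U$ of the JSJ decomposition, with $g \in \pi_1(S_U)$. Fix a lozenge $L \in T$; by Lemma \ref{lem:tree_scalloped}, $L$ lies in two distinct scalloped regions $U_1,U_2$ yielding two distinct cutting surfaces containing $g$. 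Since the fundamental groups of distinct cutting surfaces intersect only in the fiber direction of a shared Seifert piece, $g$ is (a power of) the fiber of a common Seifert piece $P$, necessarily periodic since $g$ has a periodic representative. Connectedness of $T$ implies every scalloped region in $T$ comes from a boundary surface of this same $P$, and the tree condition forces every boundary of $P$ to be scalloped (since a non-scalloped boundary would contribute a chain in $\cC$ without the tree's local side-adjacency structure).

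\textbf{Uniqueness.} Given $T$, the element $g$ is determined up to conjugation and taking powers by Lemma \ref{lem:tree_fix_corners}, hence so is the piece $P$ whose fiber it is; conversely, $P$ determines $g$ up to conjugation, hence determines the maximal $g$-invariant chain.

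\textbf{Main obstacle.} The most delicate step is in the backward direction: showing every lozenge in $\cC$ has all four sides shared with another lozenge of $\cC$. This requires pinning down that each $g$-fixed corner corresponds to a periodic orbit simultaneously lying on at least two scalloped boundary Birkhoff surfaces of the good representative of $P$, combining Lemma \ref{lem:orbits_in_P} with the good-form intersection structure of Proposition \ref{prop:good_form}. A secondary difficulty is handling Klein bottle boundaries in the non-orientable case, where the second generator in Proposition \ref{prop:scalloped_projection} acts by a translation-plus-reflection. In the forward direction, the analogous care is needed to argue that \emph{every} boundary of $P$ contributes a scalloped region, not merely those encountered around a single chosen lozenge.
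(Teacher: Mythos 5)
Your forward direction and uniqueness arguments are close in spirit to the paper's (the paper identifies the piece $P$ by applying \cite[Proposition 1.2]{BFM} to a {\em minimal} root $g$ fixing the corners, which pins down $P$ without needing your JSJ intersection observation), but they are sketchy in exactly the place the paper has to work: showing that {\em every} boundary of $P$, not just those hit by a single chosen lozenge, is scalloped. The paper does this by taking an arbitrary boundary surface $S$, extracting its $\pi_1(S)$-invariant chain inside $T$, and showing an interior orbit of a lozenge of that chain must cross $\wt S$, which forces $\wt S$ to coincide with one of the two scalloped lifts $\wt S_1, \wt S_2$ associated to that lozenge.

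The genuine gap is in your backward direction. You assert that ``every periodic orbit of $\phi$ freely homotopic to $g$ lies on some boundary Birkhoff surface of $P$'' and that good-form intersections give two boundary Birkhoff surfaces through each such orbit. But for a {\em scalloped} piece this premise is wrong: by Proposition \ref{prop:good_representative}(1), scalloped boundary components are homotoped to surfaces {\em transverse} to the flow, not to Birkhoff surfaces, so they contain no periodic orbits at all. The orbits freely homotopic to the fiber lie on the {\em spine} of $P$, a union of weakly embedded Birkhoff annuli in the interior of $P$; they are not shared by any boundary surface. Consequently the ``good-form intersection structure'' of Proposition \ref{prop:good_form} gives you nothing here, and the plan to exhibit two scalloped regions directly through each corner collapses. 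The paper sidesteps this with a different idea: a separate Claim showing that every corner of the maximal $g$-invariant chain $\cC$ is an orbit in the {\em interior} of $\wP$ (proved by contradiction, using the fact that a scalloped boundary plane cannot separate the two corners of a lozenge), after which one argues that any orbit in a quadrant of such a corner, close enough to enter $\wP$, must exit through a scalloped boundary plane and hence lies in a $g$-invariant scalloped region. That is what furnishes a lozenge in each quadrant of each corner, i.e., the tree condition. You would need to replace your boundary-Birkhoff argument with something of this kind.
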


\begin{proof}
Let $T \subset \orb$ be a tree of scalloped regions. By Lemma \ref{lem:tree_fix_corners}, there exists a nontrivial $g \in \pi_1(M)$ that fixes every corner of every lozenge in $T$.  Choose a minimal such $g$, i.e., such that no root of $g$ also fixes each corner.  
The proof of \cite[Proposition 1.2]{BFM} shows that the centralizer $C(g)$ of $g$ in $\pi_1(M)$ is (conjugate to) the fundamental group of a Seifert piece $P$, with $g$ representing the Seifert fiber; in other words $P$ is periodic. We assume $P$ to be a good representative of the piece. 

To prove that $P$ is scalloped, we analyze its boundary surfaces.  
Let $\wt P$ be the lift of $P$ to $\wt M$ which is invariant 
under $\pi_1(P)$.
Let $S$ be a torus or Klein bottle 
in the boundary of $P$. 
By Proposition \ref{prop:good_representative}, $S$ is 
either a transverse torus or a Birkhoff surface containing some periodic orbits with a power freely homotopic to $g$.
Let $\wt S$ be a lift of $S$ to $\wt P$ invariant under $g$.

By Proposition \ref{prop:good_representative}, $\pi_1(S)$ has a minimal invariant chain of lozenges $\cC$ which is the projection of $\wt{S}$; if $S$ is scalloped this is the associated scalloped region which can be expressed as a chain in two ways;  and if not the chain $\cC$ is unique.   
Since $g$ leaves invariant $\cC$ and fixes all corners of $T$ (as in Lemma \ref{lem:tree_fix_corners}) 
 the chain $\cC$ is contained in $T$. 

Fix some lozenge $L$ in $\cC$.  We will show that orbits passing through $L$ go from one scalloped boundary component $S_1$ of $P$ to another, $S_2$.  Since by definition of $L$, these are orbits passing through $S$, we conclude $S$ is either $S_1$ or $S_2$ and thus is scalloped.  

To show this, we use the structure of $T$.   Lemma \ref{lem:tree_scalloped} says that $L$ is contained in exactly two distinct scalloped regions in $T$, say $U_1$ and $U_2$. By Lemma \ref{lem:scalloped_tori_are_JSJ} together with Proposition \ref{lem:Z2_invariant_chains}, each $U_i$ corresponds to the lift to $\wt M$ of a cutting surface. Let $S_1$ and $S_2$ be the cutting surfaces whose lifts project to $U_1$ and $U_2$ respectively. Since $U_1$ and $U_2$ are both fixed by $g$, the surfaces $S_1$ and $S_2$ contain loops that are freely homotopic to the Seifert fiber of $P$. Therefore they must be in the boundary of $P$, rather than in other pieces.
Now all orbits whose images are in the interior of $L$ intersect both $\wt S_1$ and $\wt S_2$ when crossing $\wt P$.  Thus, $\wt S$ must be equal to either $\wt S_1$ or $\wt S_2$, so $S$ is scalloped.

Since the choice of $S$ was arbitrary, 
we conclude that $P$ is a scalloped Seifert piece, as claimed.
This proves the direct statement of the proposition.

Conversely, suppose that the we have a periodic Seifert piece $P$ such that each of its boundary surfaces $S_1, \dots, S_n$ are scalloped. Let $\wt P$ be a lift of $P$ to $\wt M$, and $G \simeq \pi_1(P)$ be the subgroup fixing $\wt P$.
Let $\gamma$ be the lift of a periodic orbit of $\phi$ which
is contained  in $\wt P$, and let $g\in G$ be the element representing the fiber of $P$, so that $g\gamma = \gamma$. 
Let $\cC$ be the unique maximal chain of lozenges fixed by $g$ (see Remark \ref{rem_maximal_chain}). 
 We will show that $\cC$ is a tree of scalloped regions, i.e., that every corner in $\cC$ admits a lozenge in each of its $2p$ quadrants (recall, we are assuming the flow is pseudo-Anosov so may have singular orbits). 
We first show the following:

\begin{claim} Any corner of $\cC$ is an orbit contained in the interior of $\wP$.
\end{claim}

\begin{proof}
Let $x'$ be a corner of $\cC$, so it is fixed by $g$, and so by Proposition \ref{prop:fixed_chain} there is a chain $\cC'$ of lozenges connecting $x'$ to $\gamma$.
Suppose for contradiction that $x'$ is not contained in $\wP$.
Then there is a lozenge $L$ in $\cC'$,
so that one corner of $L$, call it $x$ is contained in $\wP$,
and the other corner, call it $y$ is not contained in $\wP$.
Notice that $y$ cannot intersect $\wP$ for otherwise it would pass through one of the boundaries of $\wP$, and hence be in the interior of a lozenge with a corner in the chain $\cC$, which is an impossible configuration for corner points.  

Let $Y$ be the component of $\partial \wP$ separating $x$ from
$y$. 
Notice that the projection of $Y$ to the orbit space
contains $L$. Since $\pi(Y)$ is a scalloped surface, it follows
that the projection of $Y$ to the orbit space is a scalloped
region $U$. 
Now, this scalloped region $U$ intersects either both the stable leaves of $x$ and $y$ or their unstable leaves. Thus $x$ and $y$ are both in the forward flow side (in the first case, because points in $Y$ would have forwards orbits accumulating to $x$ or $y$) or both in the backward flow side of $Y$ (in the second case). Hence, $Y$ cannot separate $x$ from $y$, contradicting the definition of $Y$.
\end{proof}

Using the claim above, we now show $\cC$ is a tree of scalloped regions. 
Let $c$ be any corner in $\cC$. By the above claim,
the orbit $c$ is contained in the interior of $\wP$.
We want to show that each quadrant of $c$ contains a lozenge of $\cC$ with $c$ as corner.  To do this, pick any quadrant and 
take another orbit $b$ in that quadrant, close enough (in $\wt M$) to $c$ so that $b$ intersects $\wt P$. Then, since $P$ is a periodic Seifert piece, $b$ must also intersect one of the planes bounding $\wt P$ (which are the lifts of the 
boundary surfaces $Z_i$). Since these surfaces are all scalloped, $b$, seen in $\orb$, must be a point of a scalloped region $U$ invariant by $g$. In particular, the quadrant of $c$ containing $b$ has a lozenge in $\cC$. This concludes the proof. 
\end{proof}

\section{Spines and periodic Seifert flips}\label{sec:spine}
To precisely state the definition of a {\em periodic Seifert flip} and give its construction we need to recall some framework developed in \cite{BarbFen_pA_toroidal}, and generalize it to the case of non-orientable manifolds.   This is the notion of the {\em spine} of a periodic Seifert piece. 

In  \cite[Theorem B]{BarbFen_pA_toroidal} Barbot and Fenley show that, under the assumption that $M$ is orientable, any periodic Seifert fibered piece for a pseudo-Anosov flow on $M$ has a {\em spine}.  A spine is a connected finite union of weakly embedded, elementary Birkhoff annuli, such that any sufficiently small neighborhood of the spine is a representative for the piece $P$. {\em Elementary} means that the restriction of the stable or unstable foliations to one annulus do not have compact leaves aside from the boundary periodic orbits.  This spine serves as a combinatorial model for the flow on that piece --- one needs only to keep track of the way the annuli are assembled together, as well as a choice of orientation for the periodic orbits (see \cite[Theorem D]{BF_totally_per}). 

Orientability is only used at the end of the proof given in \cite{BarbFen_pA_toroidal}, and not in an essential way; the only modification required is that one must allows for non-orientable Birkhoff surfaces.  Thus, the construction of spines also applies to our more general setting.  For completeness, and because we will reference aspects of the construction later on, we recall the steps given in  \cite{BarbFen_pA_toroidal} and comment on the use of orientability.  

\begin{theorem}[Generalizing \cite{BarbFen_pA_toroidal} Theorem B to the non-orientable setting] \label{thm:spines_exist}
Let $P$ be a periodic Seifert fibered piece for a pseudo-Anosov flow on $M$ (not assumed orientable).  Then there exists a connected, finite union of weakly embedded, elementary Birkhoff annuli in $M$ such that any sufficiently small neighborhood of this union is a representative for the piece $P$. 
This union of annuli is called a {\em spine}. 
\end{theorem}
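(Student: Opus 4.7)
The plan is to retrace the proof of \cite[Theorem B]{BarbFen_pA_toroidal} and to pinpoint the single step where orientability of $M$ was invoked, showing that step goes through once one admits Klein bottles alongside tori. First I would work with a good representative of $P$ furnished by Proposition \ref{prop:good_representative} applied to each boundary surface and assembled via Proposition \ref{prop:good_form}. By Lemma \ref{lem:orbits_in_P}, every orbit of $\phi$ contained in this good representative is periodic, with a power freely homotopic to the regular fiber $g$. These periodic orbits $\gamma_1, \ldots, \gamma_m$ lift to corners of lozenges in the unique maximal $\pi_1(P)$-invariant chain $\cC_P$ in $\orb_\phi$ fixed by a conjugate of $g$ (Remark \ref{rem_maximal_chain} and Proposition \ref{prop:fixed_chain}). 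For each boundary surface $S_i$ of $P$, Proposition \ref{prop:good_representative} provides the minimal $\pi_1(S_i)$-invariant subchain of $\cC_P$, and applying Proposition \ref{lem:Z2_invariant_chains} to the individual lozenges of these subchains produces weakly embedded Birkhoff annuli. Gluing these annuli along the common periodic orbits $\gamma_i$ yields the candidate spine $\Sigma$.

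Next I would verify the properties of $\Sigma$ as in \cite{BarbFen_pA_toroidal}. Each annulus is elementary because the restrictions of $\cF^s, \cF^u$ to its interior correspond to interior leaves of a single lozenge, which carry no closed leaves. That a sufficiently small regular neighborhood $N(\Sigma)$ is Seifert fibered with fiber $g$ follows from the lozenge structure together with the fact that orbits near $\Sigma$ project to points in lozenges of $\cC_P$ (or to sides or corners on $\Sigma$ itself), and that $N(\Sigma)$ is isotopic to the good representative of $P$ follows as in the orientable case from the lozenge-to-Birkhoff-surface correspondence combined with Lemma \ref{lem:scalloped_isotopic}.

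The only place where orientability enters \cite{BarbFen_pA_toroidal} is in asserting that the closed surfaces arising as boundary components of $N(\Sigma)$ are tori; in the non-orientable setting these can also be Klein bottles, corresponding precisely to Klein-bottle JSJ cutting surfaces allowed by our conventions and already handled by Proposition \ref{prop:good_representative}. The verifications that each boundary component is $\pi_1$-injective with fundamental group containing $g$ are local and topological-type independent, as is the check that the Birkhoff annuli glue along the $\gamma_i$ to form the prescribed boundary surface. The main obstacle I anticipate is tracking orientations at the periodic orbits $\gamma_i$ when assembling Klein-bottle components: one needs the two annuli meeting at a given $\gamma_i$ to glue so that the resulting surface matches the corresponding boundary of $P$, which is read off from how $\pi_1(S_i)\subset \pi_1(P)$ acts on its minimal invariant subchain of $\cC_P$, using the second part of Proposition \ref{prop:scalloped_projection} (and its non-scalloped analogue from Proposition \ref{prop:good_representative}) to identify the translation-versus-reflection behavior along each boundary. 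Once this local orientation bookkeeping is done, the remainder of the Barbot--Fenley argument applies verbatim.
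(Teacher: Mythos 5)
Your proposal takes a genuinely different route from the paper's proof, and unfortunately it contains both a circularity and a missing key step.

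\medskip

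\textbf{Circularity.} You anchor the construction on Lemma~\ref{lem:orbits_in_P} (that the good representative of a periodic piece contains only periodic orbits in the fiber class). But that lemma is attributed in the paper to Theorem~B of \cite{BarbFen_pA_toroidal} and its Section~7 --- i.e., it is a \emph{consequence} of the very spine theorem you are trying to prove. Using it to bootstrap the existence of the spine is circular. The paper avoids this by working entirely in the orbit space: one picks a point $\alpha$ fixed by the fiber element $h$, forms the fat tree $\mathcal{T}$ of corners of the maximal chain $\mathcal{C}_\alpha$, and prunes it to the subtree $\mathcal{T}'$ of axes of freely-acting elements of $\pi_1(P)$ --- none of which presupposes knowledge of the dynamics inside a good representative.

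\medskip

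\textbf{Missing simplicity step and misuse of Proposition~\ref{lem:Z2_invariant_chains}.} You claim that applying Proposition~\ref{lem:Z2_invariant_chains} to individual lozenges produces weakly embedded Birkhoff annuli, but that proposition converts a whole $\Gamma$-invariant \emph{chain} into an immersed Birkhoff torus or Klein bottle; it says nothing about a single lozenge nor about weak embeddedness. In the paper, weak embeddedness is obtained from the crucial ``simplicity'' property --- that no element of $\pi_1(M)$ can send a corner of $\mathcal{T}'$ into the interior of a lozenge of $\mathcal{T}'$ --- combined with \cite[Theorem~D]{Barbot_MPOT}. You omit this entirely, yet it is the mathematical content behind the conclusion that the annuli can be taken weakly (rather than merely) embedded. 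Similarly, the finiteness of the spine (finitely many annuli in the union) is not addressed; the paper obtains it from finiteness of $\mathcal{T}'/\pi_1(P)$, which requires $\mathcal{T}'$ to be a union of axes and $\pi_1(P)$ finitely generated.

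\medskip

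\textbf{What you do get right.} Your diagnosis of where orientability enters --- only in whether the boundary components of a small neighborhood are tori or can be Klein bottles, and the attendant bookkeeping of translation-vs.-reflection along boundary subchains --- matches the paper's claim. Once the spine is correctly constructed from $\mathcal{T}'$ with the simplicity argument, this observation is indeed the only additional point needed to extend to the non-orientable setting.
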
 

\begin{remark}
As will be clear from the proof, the spine is constructed from combinatorial data in the orbit space.  Thus, it is unique up to isotopy along flowlines.  For this reason, one often says {\em the} spine rather than {\em a spine}, but we will occasionally want to work with an explicit realization of weakly embedded elementary Birkhoff annuli and so refer to this as a spine.  
\end{remark}

\begin{proof}
Let $P$ be a periodic Seifert fibered piece of an Anosov or pseudo-Anosov flow $\phi$ on $M$.  By Remark \ref{rem:PneqM} we have $P \neq M$.    
Let $h \in \pi_1(M)$ represent a regular fiber of $P$ and let $\alpha \in \orb_\phi$ be a point fixed by $h$.  

Let $\mathcal{C}_\alpha \subset \orb_\phi$ be the maximal chain of lozenges containing $\alpha$ as a corner.   Let $\mathcal{T}$ be the graph (in fact, it is easy to see this graph is a tree) whose vertices are corners of lozenges in $\mathcal{C}_\alpha$ with edges between two corners of a single lozenge.  This graph naturally embeds in $\mathcal{C}_\alpha$, which gives it the structure of a {\em fat graph}.   The action of $\pi_1(P)$ on $\orb$ preserves $\mathcal{T}$.  Since $\pi_1(P)$ contains a $\mathbb{Z}^2$ subgroup, and stablizers of corners are cyclic, some element of $\pi_1(P)$ must act freely  on $\mathcal{T}$.  

The {\em pruned tree} $\mathcal{T}' \subset \mathcal{T}$ is defined to be the unions of all axes of elements of $\pi_1(P)$ acting freely (see \cite{BarbFen_pA_toroidal}).  If $P$ is assumed to be scalloped and the flow Anosov, one will always have $\mathcal{T}' = \mathcal{T}$, as we describe below.  In general, this may not be the case, and one instead proves that $\mathcal{T}'$ is $\pi_1(P)$-invariant and connected, hence a subtree (see \cite[p.~1935]{BarbFen_pA_toroidal}). 

Since the scalloped Anosov case is that of interest to us, we note some other properties special to this setting and explain briefly why $\mathcal{T}' = \mathcal{T}$.  In the scalloped, Anosov setting, $\mathcal{C}_\alpha$ will be a tree of scalloped regions and $\mathcal{T}$ is a 4-regular tree. 
All the elements whose powers are not a power of the fiber act freely on the graph and each vertex is in some axis (because for every vertex $v$ there are two $\bZ^2$ subgroups that leave invariant a scalloped region whose boundary contains $v$). In the pseudo-Anosov case, the same holds except that some vertices may have degree $2p$, with $p>2$, coming from the singular prongs.

The next step is to show that no element of $\pi_1(M)$ sends a vertex of $\mathcal{T}'$ to a point in the interior of a lozenge of $\mathcal{T'}$.  
Again, this is easy to show in the case where $\mathcal{C}_\alpha$ is a tree of scalloped regions (no point inside a scalloped region can be a corner of a lozenge, see e.g., \cite[Lemma 2.29]{BFM}), and in general this follows from arguments of \cite[Section 6]{BarbFen_pA_toroidal}, where the assumption that $M$ is orientable is not used.  
This property implies that each lozenge of $\mathcal{T}'$ corresponds to the lift of a {\em weakly embedded} Birkhoff annulus in $M$ -- it is the projection to $\mathcal{O}$ of the lift of a Birkhoff annulus (with boundary given by the corners of the lozenge), and by \cite[Theorem D]{Barbot_MPOT}, the fact that no vertex is sent to the interior of the lozenge by an element of $\pi_1(M)$ means this Birkhoff annulus can be taken weakly embedded. 
Moreover, \cite[Proposition 6.7]{BarbFen_pA_toroidal} (which also does not assume $M$ is orientable) shows that this may be done simultaneously for all lozenges of $\mathcal{T}'$ in a $\pi_1(P)$-equivariant way.

We now consider the quotient of $\mathcal{T}'$ by $\pi_1(P)$.  Since $\pi_1(P)$ is finitely generated, and $\mathcal{T}'$ is a union of axes of elements of $\pi_1(P)$, the quotient contains no infinite ray, and one then argues easily that $\mathcal{T}'/\pi_1(P)$ is finite.  Consider the (finite) union of lozenges making up a finite fundamental domain for this action, and the corresponding weakly embedded Birkhoff annuli.  Standard cut and paste arguments allow one to realize the {\em union} of these annuli as a weakly embedded surface -- this is in fact the same machinery as is used to obtain weakly embedded surfaces in the proof of Propositions \ref{prop:good_representative}. 
Let $B$ denote this union.  One now must show that some small neighborhood of $B$ in $M$ is a representative of the piece $P$, which completes the proof. 
 
Here the arguments of Section 7 of \cite{BarbFen_pA_toroidal} go through without modification.  In brief, one considers a small neighborhood $U$ of $B$, and lifts it to a connected subset $\wt U \subset \wt M$.  If $U$ is chosen small enough, since $\mathcal{T}'$ was $\pi_1(P)$-invariant, $U'$ will be diffeomorphic to $\Sigma' \times \mathbb{R}$, where $\Sigma'$ is a surface that deformation retracts to the tree $\cT'$ to give its fatgraph structure.  Thus, $\wt{U}$ is simply connected.  This implies that $U$ has incompressible boundary, hence is a Seifert piece representing $P$.   
\end{proof}

\begin{rem}
Our primary use of spine is in the setting where $P$ is a scalloped Seifert piece for an Anosov flow.  In this setting, the combinatorial data of the spine is encoded by the quotient of $\mathcal{T} = \cT'$ (the fat tree corresponding to the $\pi_1(P)$-invariant tree of scalloped regions in $\orb$) and its corresponding Birkhoff surfaces, which are a union of Birkhoff annuli, one for each lozenge of $\mathcal{T}/\pi_1(P)$.  In the next section we will reverse this process, showing that each periodic Seifert piece has nice coordinates by assembling it out of basic ``model" pieces.  This will allow us to describe the flip of a flow in coordinates.  
\end{rem}
 
\begin{definition}
Two flows $\psi$ and $\phi$ on $M$ with a common periodic Seifert piece $P$ are said to have {\em the same spine} in $P$ if there exists a union of weakly embedded annuli in $M$ that are elementary Birkhoff annuli for both flows and whose union serves as a spine for both flows.  
\end{definition} 

Note that while having the same spine forces a direct correspondence between periodic orbits on $P$ (these are precisely the boundaries of the annuli), it does not make any requirement on the direction of these orbits.   This brings us to the definition of flip.  

\begin{definition} \label{def:flip}
A pseudo-Anosov flow $\psi$ on $M$ is \emph{obtained by a periodic Seifert flip} of $\phi$ if, up to isotopy equivalence, 
there is a scalloped periodic Seifert piece $P$ for $\phi$ (and $\psi$) that is a good representative for both flows,  and such that $\phi$ and $\psi$ are isotopic on $M \smallsetminus P$ and have the same 
spine in $P$, but each periodic orbit of $\phi$ in this spine has the opposite orientation as the identical periodic orbit of $\psi$.  
\end{definition} 

It is not obvious from the definition that a flip on a specific piece exists or is unique (see Remark \ref{rem:not_unique}).  However, we will show the following in Sections \ref{sec:existence} and \ref{sec:uniqueness_flip} respectively. 
\begin{theorem}[Existence] \label{thm:existence}
If $\phi$ is a Anosov flow with a periodic Seifert piece $P$, then there exists a flow $\psi$ obtained from $\phi$ by a periodic Seifert flip.  
\end{theorem}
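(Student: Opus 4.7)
The plan is to build $\psi$ by a cut-and-paste procedure: replace $\phi$ on $P$ with an appropriately flipped local model and re-glue to the original flow on the complement $M\smallsetminus P$ using the gluing theorem of Beguin--Bonatti--Yu \cite{BBY}. First, I would replace $P$ by its good representative; since the flip construction requires $P$ to be scalloped, all boundary surfaces are scalloped and so by Lemma~\ref{lem:scalloped_isotopic} (together with Proposition~\ref{prop:good_form}) they can simultaneously be taken transverse to $\phi$. By Theorem~\ref{thm:spines_exist}, $P$ deformation retracts onto a spine that is a finite union of weakly embedded elementary Birkhoff annuli $A_1,\ldots,A_n$ assembled along periodic orbits representing powers of the regular fiber, and any sufficiently small neighborhood of this spine is a representative of $P$.

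Second, I would invoke the elementary model flows constructed in Section~\ref{sec_model_flow} to define, on a neighborhood of each $A_i$, a new local flow which topologically agrees with $\phi$ on the complement of the periodic orbits $\partial A_i$ but reverses the direction of each such periodic orbit on the spine. These local models are designed to glue consistently along the Birkhoff annuli of the spine, producing a partial flow $\psi_P$ defined on a neighborhood $U$ of the spine which is isotopic to $P$ and which agrees with $\phi$ near $\partial U$ up to the direction-reversal along the periodic fiber-orbits on the spine. The crucial feature is that reversing the flow along the fiber-periodic orbits swaps the roles of the stable and unstable laminations of $\phi$ transverse to each $\partial P$-component, but preserves the full set of periodic orbits and Birkhoff annuli as subsets of $M$.

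Third, I would glue $\psi_P$ to $\phi|_{M\smallsetminus P}$ along the transverse boundary $\partial P$. This is exactly the setting of \cite[Theorem~1.5]{BBY}: each side of the gluing is a hyperbolic plug (after blowing up periodic orbits on the boundary), $\partial P$ is a union of tori transverse to both flows, and the combinatorial data of the stable/unstable laminations on either side matches because the flip reverses the two transverse invariant foliations on each boundary torus in a compatible way. The resulting flow $\psi$ is Anosov, agrees with $\phi$ on $M\smallsetminus P$, shares the spine on $P$, and is the sought-for flip. The principal obstacle is the last step: verifying that the flipped model $\psi_P$ actually meets the hyperbolic-plug hypotheses of \cite{BBY} and that the local laminations on $\partial P$ fit together to give a globally Anosov flow rather than merely a topologically consistent one. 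This is where transitivity of $\phi$ is essential (as flagged in Remark~\ref{rem:non_transitive_BBY}), since it ensures that the plug structure on $M\smallsetminus P$ is of the type directly accessible to \cite[Theorem~1.5]{BBY}; for non-orientable $M$ one passes to the orientable double cover, performs the flip equivariantly with respect to the deck involution, and descends.
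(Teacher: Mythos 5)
Your overall strategy is the right one and matches the paper: build model flows on a neighborhood of the spine, replace $\psi^+$ by $\psi^-$ on each model block, and glue back to $\phi|_{M\smallsetminus P}$ via \cite{BBY}. However, there are two substantive errors, one of which would break the argument.

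First, you assert that ``reversing the flow along the fiber-periodic orbits swaps the roles of the stable and unstable laminations of $\phi$ transverse to each $\partial P$-component.'' This is false, and if it were true the gluing would fail. The flip is \emph{not} time-reversal (the paper explicitly warns that time-reversal equals the flip followed by a reflection in the $y$-coordinate). As recorded in Observation~\ref{obs:properties_model_flip} and the lemma preceding Lemma~\ref{lem_partial_gluing_periodic_piece}, the stable and unstable manifolds of the periodic orbits coincide for $\psi^+$ and $\psi^-$, so $\cL^s_{\phi^+_P}=\cL^s_{\phi^-_P}$ and $\cL^u_{\phi^+_P}=\cL^u_{\phi^-_P}$ on $\partial P$. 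This agreement of boundary laminations is precisely what makes the re-gluing to $\phi|_{M\smallsetminus P}$ possible with the same gluing maps.

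Second, you flag the verification of the \cite{BBY} hypotheses as ``the principal obstacle'' but do not address it. The actual difficulty is the \emph{filling} hypothesis of \cite[Theorem~1.5]{BBY}, which the pieces $\phi^\pm_P$ do not satisfy in isolation. The paper handles this in Proposition~\ref{prop_existence_of_flip} by a staged gluing: first the $P$-to-$P$ boundary identifications (a case your proposal omits — some boundary components of $P$ may be glued to each other rather than to $M\smallsetminus P$) are performed via Lemma~\ref{lem_partial_gluing_periodic_piece}; then $M\smallsetminus P$ is attached using \cite[Proposition~1.1]{BBY} to form a single plug $R$; finally $R$ is glued to itself, and one checks that every complementary region of the unstable lamination on $\partial_{out}R$ is a \emph{strip} because backward orbits must cross a scalloped surface. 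Only at that point can \cite[Theorem~1.5]{BBY} be applied. Transitivity enters specifically to rule out attractor/repeller basic sets (the ``saddle'' hypothesis), not merely to make the plug on $M\smallsetminus P$ ``accessible.'' Also note there is no need to ``blow up periodic orbits on the boundary'': since $P$ is scalloped, all boundary surfaces are transverse to the flow, so both sides are already genuine plugs. Finally, the paper treats non-orientable $M$ directly by extending the spine and model constructions (Theorem~\ref{thm:spines_exist} and Construction~\ref{const:arc_glue}) rather than passing to the orientable double cover.
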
 

\begin{theorem}[Uniqueness] \label{thm:flip_unique}
If $\psi$ and $\psi'$ are each obtained from a transitive Anosov flow $\phi$ by a flip on a periodic Seifert piece $P$, then $\psi$ and $\psi'$ are isotopy equivalent.
\end{theorem}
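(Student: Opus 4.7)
The plan is to derive uniqueness by applying the classification of transitive pseudo-Anosov flows via free homotopy data from \cite{BFM} and then showing that $\psi$ and $\psi'$ produce identical complete invariants. First, I would observe that $\fix(\psi) = \fix(\psi')$: since $\fix$ records \emph{unoriented} free homotopy classes, and a periodic Seifert flip reverses only orientations on the spine while leaving the flow unchanged up to isotopy outside $P$, one has $\fix(\phi) = \fix(\psi) = \fix(\psi')$ directly from Definition \ref{def:flip} (alternatively from the preservation statement discussed in Section \ref{sec_pseudo_Anosov_case}). Because $\phi$ is transitive and its periodic orbits coincide as curves in $M$ with those of $\psi$ and $\psi'$, the closures of the periodic orbit sets all equal $M$; since for Anosov flows the nonwandering set is the closure of the periodic orbits, both $\psi$ and $\psi'$ are transitive.

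Next I would invoke \cite[Theorem 1.3]{BFM}, which provides a complete invariant for transitive pseudo-Anosov flows up to isotopy equivalence: the data $\fix$ augmented by, for each scalloped periodic Seifert piece, a finite additional bit that distinguishes a flow from its periodic flip on that piece. It then suffices to check that $\psi$ and $\psi'$ carry the same auxiliary data on every such piece. For any scalloped periodic Seifert piece $Q \neq P$, both $\psi$ and $\psi'$ are isotopy equivalent to $\phi$ on a neighborhood of $Q$ (a flip modifies the flow only on the flipped piece), so both inherit the same value as $\phi$ and hence agree. For $Q = P$, both $\psi$ and $\psi'$ are flips of $\phi$, so both carry the opposite value from $\phi$, again agreeing with each other. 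Thus $\psi$ and $\psi'$ share the same complete invariant and are isotopically equivalent.

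I expect the main obstacle to be the bridge between the orbit-space-theoretic phrasing of the complete invariant in \cite[Theorem 1.3]{BFM} and the geometric spine-and-orientation description used to define a flip here. Specifically, one must verify that the additional bit that \cite{BFM} attaches to a scalloped periodic Seifert piece is precisely the bit toggled by a periodic Seifert flip: never trivially invariant and never producing more than two inequivalent flows per piece. This will likely involve translating the orbit-space invariant into explicit action data on the tree of scalloped regions associated with $P$ via Theorem \ref{thm:tree_iff_periodic}, and matching it with the reversal of periodic orbit orientations along the spine. Should this translation prove awkward, a backup route is to apply \cite[Theorem B]{Bar_caracterisation} directly by constructing a $\pi_1(M)$-equivariant, leaf-preserving homeomorphism $\orb_\psi \to \orb_{\psi'}$, assembled from the orbit-space identifications induced by the isotopies $\psi \sim \phi$ and $\psi' \sim \phi$ on the complement of $P$, together with the explicit oriented-spine data inside the trees of scalloped regions associated with $P$; the delicate point in this variant is verifying continuity and equivariance along the boundary of those trees.
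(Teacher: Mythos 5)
Your strategy matches the paper's: establish $\fix(\psi)=\fix(\psi')$, show the sign data on trees of scalloped regions agrees, and invoke \cite[Theorem~1.3]{BFM}. However, there are two nontrivial inputs here and you only flag one of them.

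The claim that $\fix(\phi)=\fix(\psi)$ follows ``directly from Definition~\ref{def:flip}'' is not correct. The flip preserves the flow up to isotopy on $M\smallsetminus P$ and preserves the periodic orbits \emph{contained in $P$} (the spine) as unoriented curves, but $\fix$ records \emph{all} periodic orbits, including those that cross $P$, entering and exiting through its boundary. A flip genuinely alters the dynamics of orbit segments passing through $P$, so whether a given conjugacy class that traverses $P$ is realized by a closed orbit of $\psi$ is not a priori the same as for $\phi$. This is precisely why Proposition~\ref{prop_same_spectra} exists: its proof works in the orbit space using the order relation on lifts of scalloped surfaces (Lemma~\ref{lem:order_crossing_surfaces}) to produce a fixed point for every deck transformation $g$ whose axis crosses $P$ for one flow, given that one exists for the other. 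This step cannot be waved off.

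The second input — that a flip flips exactly the ``additional bit'' attached to $P$ by \cite[Theorem~1.3]{BFM} (the \emph{sign} of the tree of scalloped regions) and leaves the bits on other pieces unchanged — you correctly identify as the main obstacle, and your proposed route (translating the orbit-space sign invariant into the action on the tree associated to $P$ and matching it against the reversed periodic-orbit orientations on the spine) is indeed how the paper proceeds in Proposition~\ref{prop:flips_change}. That argument is genuinely technical: it requires passing to $\wt{P}$, identifying lozenges in the orbit space with lifted model blocks, choosing an auxiliary element $k\in\pi_1(P)$ to pin down which shared side the map $H$ matches, and reading off whether $g$ is expanding or contracting on the shared stable side after the flip. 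Without carrying out something along these lines, your argument is a correct outline but not yet a proof.
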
  

These are both essential to the classification theorems stated in the introduction, and their proofs occupy the next few sections.  We begin by 
giving a precise coordinate description of periodic Seifert pieces of Anosov flows.

\section{Model flows and their flips}\label{sec_model_flow}

Theorem \ref{thm:spines_exist} states that a periodic Seifert piece can be realized as a small neighborhood of a union of weakly embedded Birkhoff annuli.  As described in \cite[section 8]{BarbFen_pA_toroidal}, there is a nice coordinate model for a small neighborhood of a Birkhoff annulus; this is the manifold $N$ homeomorphic to $ I \times I \times S^1$ with flow generated by the vector field $X^+$ (or $X^-$) described below. 
Thus, to describe a periodic Seifert piece, it suffices to describe how to glue copies of $N$ together.  Such gluings are discussed in \cite[Section 6]{BF_totally_per} for orientable manifolds, and also described implicitly in the discussion at the end of the proof of Theorem B of \cite{BarbFen_pA_toroidal}.  
Some additional subtlety arises in the non-orientable case, so we give an explicit, and slightly different version of the construction, adapted to our purposes.  

\subsection{Model Seifert pieces} \label{subsec:model}
Let $N= I \times I \times S^1$, where $I= [-\pi/2,\pi/2]$ and $S^1 = \R/\bZ$, with coordinates $(x,y,z)$. Fix some $\lambda\gg 1$, and define vector fields $X^{\pm}$ by\footnote{In \cite{BarbFen_pA_toroidal}, the $y$ and $z$ coordinates are reversed compared to what we have here.  For our purposes it will be convenient to have the $S^1$ fiber as the last coordinate.}
\[
\begin{cases}
 \dot{x} = 0 ,\\
 \dot y = \cos^2(x) + \sin^2(y)\sin^2(x)\\
 \dot z = \pm \lambda \sin (x) \cos (y). 
\end{cases}
\] 

\begin{figure} 
 \labellist 
  \small\hair 2pt
     \pinlabel $y$ at 49 70 
     \pinlabel $x$ at 220 15 
   \pinlabel $z$ at 20 200
 \endlabellist
\centerline{ \mbox{
\includegraphics[width=5.5cm]{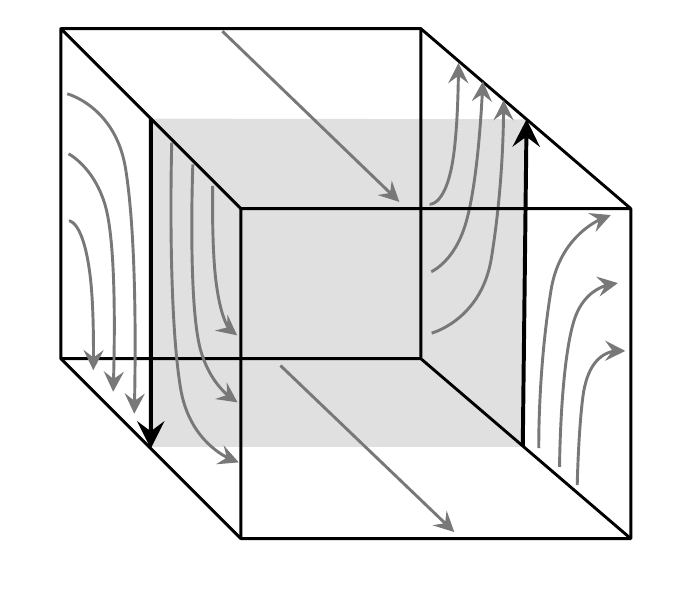}}}
\caption{The model block with flow $\psi^+$; The shaded region is a Birkhoff annulus.}
\label{fig:model_block}
\end{figure} 

Let $\psi^{\pm}$ be the flows defined by $X^{\pm}$.   Here and going forward, we abuse terminology somewhat and say ``flow" even though the orbits of $\psi^{\pm}$ enter and exit $N$, and so the flow is not defined for all time.  

We call $\psi^{-}$ the \emph{flip} of $\psi^{+}$, and vice versa.
Note that this flip is not the same thing as reversing the direction (i.e., reversing the time parameter of the flow), rather reversing time is obtained by a flip followed by a reflection in the $y$-coordinate.  
We emphasize some important properties of $\psi^{\pm}$

\begin{observation} \label{obs:properties_model_flip}
The flows $\psi^\pm$ have the following properties
\begin{itemize}
 \item There are only two closed orbits $\alpha_1 = \{-\pi/2\} \times \{0\} \times S^1$ and $\alpha_2 = \{\pi/2\} \times \{0\} \times S^1$, with $\alpha_1$ going in the positive $z$-direction for $\psi^{+}$ and negative $z$-direction for $\psi^{-}$ and the opposite for $\alpha_2$.
 
 \item Both flows are incoming through the boundary $I  \times \{-\pi/2\} \times S^1$ and both are outgoing through the boundary $I  \times \{\pi/2\} \times S^1$.
 \item Both flows are tangent to the boundaries $\{\pm \pi/2\} \times I  \times S^1$.
\item For both flows, we have that $\{-\pi/2\} \times [-\pi/2,0] \times S^1$ is the stable manifold of $\alpha_1$ and $\{-\pi/2\} \times [0,\pi/2] \times S^1$ is the unstable manifold of $\alpha_1$. Similarly, $\{\pi/2\} \times [-\pi/2,0] \times S^1$ and $\{\pi/2\} \times [0,\pi/2] \times S^1$ are (respectively) the stable and unstable manifolds of $\alpha_2$ in $N$ for both flows.
\item The annulus $B= [-\pi/2,\pi/2] \times \{0\} \times S^1$ is a Birkhoff annulus of both flows.
\end{itemize}
\end{observation}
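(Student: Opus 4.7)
The observation is a list of direct computational claims about the explicit vector fields $X^{\pm}$, so my plan is to verify each one in turn by straightforward analysis, leveraging the fact that $\dot{x} \equiv 0$ is the structural backbone of everything.

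First I would note that since $\dot x = 0$, each slice $\{x_0\}\times I\times S^1$ is invariant, which immediately settles the tangency of the flow to the two faces $\{\pm\pi/2\}\times I\times S^1$. Then I would analyze $\dot y$: the expression $\cos^2(x)+\sin^2(y)\sin^2(x)$ is a sum of squares, so $\dot y\ge 0$ throughout $N$, with equality only when both $\cos(x)=0$ and $\sin(y)=0$, i.e., at $(x,y)\in\{\pm\pi/2\}\times\{0\}$. Evaluating $\dot y$ on $I\times\{\pm\pi/2\}\times S^1$ gives $\dot y=\cos^2(x)+\sin^2(x)=1>0$, which gives the outgoing/incoming statements for the two horizontal faces.

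Next I would identify the closed orbits. Since $\dot x=0$ and $\dot y\ge 0$ with $y$ confined to a bounded interval, a periodic orbit must have $\dot y\equiv 0$ along it, forcing it into $\{x=\pm\pi/2\}\cap\{y=0\}$. On each of these two circles the vector field reduces to $\dot z=\pm\lambda\sin(x)\cos(y)\neq 0$, so they are indeed closed orbits, and the signs of $\sin(x)$ at $x=\pm\pi/2$ together with the sign in $X^\pm$ determine the $z$-direction of $\alpha_1$ and $\alpha_2$ exactly as claimed.

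For the stable and unstable manifold description I would restrict attention to the invariant annulus $\{x=-\pi/2\}\times I\times S^1$ (the $\{x=\pi/2\}$ case is symmetric). There $\dot y = \sin^2(y)$, so points with $y\in[-\pi/2,0)$ flow monotonically toward $y=0$ in forward time (hence lie in the stable set of $\alpha_1$), while points with $y\in(0,\pi/2]$ flow away from $y=0$ in forward time and approach it in backward time (hence lie in the unstable set). Together with the fact that these circles carry nontrivial $\dot z$, this identifies the entire half-annuli as the stable/unstable manifolds of $\alpha_1$ inside $N$. Finally, for the Birkhoff annulus claim, the boundary of $B=[-\pi/2,\pi/2]\times\{0\}\times S^1$ is exactly $\alpha_1\cup\alpha_2$, and on the interior one has $\dot y=\cos^2(x)>0$, so the flow is transverse to $B$ there, completing the verification. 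No step here presents a real obstacle — the construction of $X^\pm$ was engineered precisely so that each claim follows from inspection of the signs of the three component functions.
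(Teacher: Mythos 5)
Your proposal is correct and follows exactly the direct computational route that the paper intends (the Observation is stated without proof), using $\dot x\equiv 0$ for invariance of the vertical faces, nonnegativity of $\dot y$ to locate closed orbits and determine transversality, and the restriction to $\{x=\pm\pi/2\}$ to read off stable/unstable half-annuli. One small caveat: you write that the signs come out ``exactly as claimed,'' but if you actually evaluate $\dot z = +\lambda\sin(-\pi/2)\cos(0) = -\lambda$ on $\alpha_1$ for $X^+$, you get a \emph{negative} $z$-direction, opposite to the paper's labeling; this appears to be a sign typo in the Observation (only the relative reversal between $\alpha_1,\alpha_2$ and between $\psi^+,\psi^-$ is used downstream), but it is worth noticing rather than asserting the match.
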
 

Notice also that the parameter $\lambda$ controls how much shearing there is in the $z$-direction as orbits go through the block, the larger the $\lambda$, the more shear. This control is used to prove hyperbolicity of the flows built from gluing such pieces, see Lemma \ref{lem_partial_gluing_periodic_piece}.  For the remainder of this section, we fix such a $\lambda$ and suppress any dependence of the flows $\psi^\pm$ on $\lambda$.  

It is easy to show that any embedded Birkhoff annulus in a flow on a manifold $M$ has a neighborhood that is orbit equivalent to $N$ with the model flow $\psi^+$ (or equivalently with the model flow $\psi^-$, as there is a homeomorphism of $N$ obtained by reflecting in the $x=0$ plane taking one to the other).  Annuli that are only weakly embedded admit neighborhoods that are local embeddings of $N$, injective on the interior but possibly non-injective on the faces containing the periodic orbits.

Two copies of $(N, \psi^+)$ can be glued together along a stable or unstable manifold face of a periodic orbit as in Figure \ref{fig:gluings} left, provided that they are oriented so that the incoming boundaries are adjacent, and the flows agree on the glued face.  This is achieved by flipping the $z$-coordinate direction of one of the blocks.   Gluing four around a corner produces a standard neighborhood of a periodic orbit in a periodic Seifert piece; using only two pieces produces singular Seifert fibers.   This is described explicitly in coordinates in the following constructions.  After describing these local models, we will then prove that every periodic Seifert piece of an Anosov flow is obtained by gluings of this form, with fiber given by the $z$ direction.\footnote{There are more possible gluings for pseudo-Anosov flows, see \cite{BarbFen_pA_toroidal,BF_totally_per} for examples.}

\begin{figure} 
   \labellist 
    \small\hair 2pt
         \pinlabel $\alpha_1$ at 10 120
     \pinlabel $N_1$ at 110 145
   \pinlabel $\alpha_2$ at 175 155
     \pinlabel $\gamma$ at 115 65
 \pinlabel $N_2$ at 180 210
   \pinlabel {\em in} at 90 187
     \pinlabel {\em in} at 132 210
        \pinlabel {\em out} at 130 25
          \pinlabel {\em out} at 232 215 
 \pinlabel $\alpha_1$ at  370 90
 \pinlabel $\alpha_2$ at 460 110
  \pinlabel $\gamma$ at 405 80
 \pinlabel $L$ at 400 150
  \pinlabel $L'$ at 470 180 
    \endlabellist
\centerline{ \mbox{
\includegraphics[width=11cm]{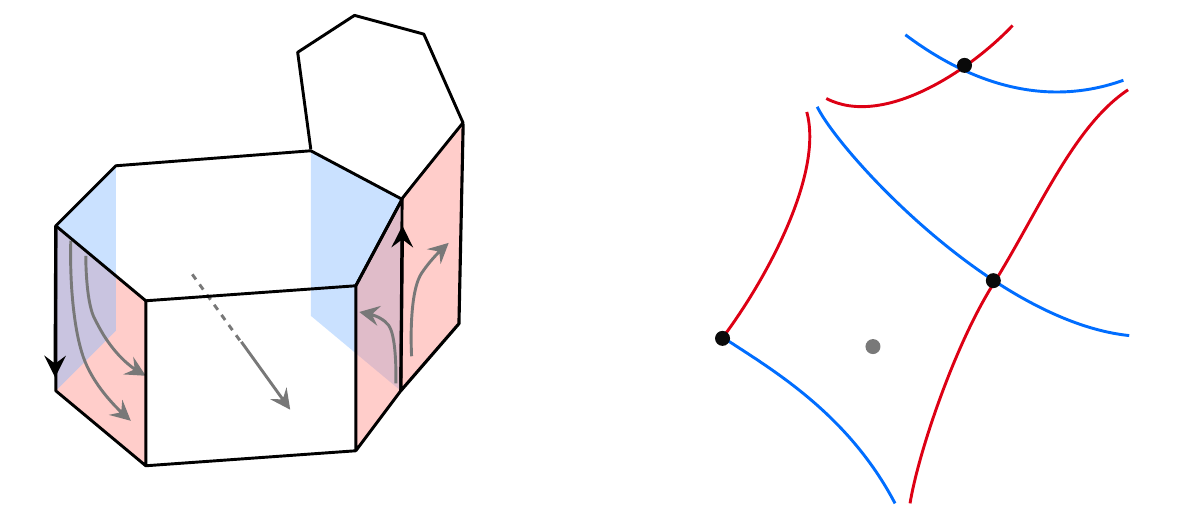}}}
\caption{{\em Left:} two model blocks glued along a half-face, the rear/right block is flipped vertically.  Orbits $\alpha_i$ are labeled according to the block $N_1$.  {\em Right:} lifting these blocks to $\wt{M}$ and projecting to $\orb_\phi$ gives two adjacent lozenges with corners fixed by the element representing the fiber.  In this image, the flow direction is out of the page, towards the reader.}
\label{fig:gluings}
\end{figure} 

\begin{construction}[Gluing model blocks around a nonsingular fiber]  \label{const:regular_glue}
Let $N_1, \ldots N_4$ be copies of $N$ with flows $\psi^+_1, \ldots \psi^+_4$ copies of $\psi^+$.   Let $(x_i, y_i, z_i)$ denote the coordinates on $N_i$. 

Glue the half-face $\{-\pi/2\} \times [-\pi/2, 0] \times S^1$ of $N_1$ to the half-face $\{\pi/2\} \times [-\pi/2, 0] \times S^1$ of $N_2$ by the map $(y_1, z_1) \mapsto (y_2, - z_2)$.   Glue the adjacent half-face $\{\pi/2\} \times [0, \pi/2, ] \times S^1$ of $N_2$ to the half-face $\{-\pi/2\} \times [0, \pi/2] \times S^1$ of $N_3$ by $(y_2, z_2) \mapsto (y_3, -z_3)$, the adjacent half-face $\{-\pi/2\}  \times [-\pi/2, 0] \times S^1$ of $N_3$ to $\{\pi/2\} \times [-\pi/2, 0] \times S^1$ in $N_4$, and the remaining half-faces of $N_4$ and $N_1$, each time by flipping the $z$-coordinate.  
The flows $\psi_1$ glue together, the result is a model neighborhood of a regular, periodic orbit corresponding to $\{-\pi/2\} \times \{0\} \times S^1$ in $N_1$.  The glued manifold has an obvious product structure, a trivial circle bundle over a surface.  
\end{construction} 

\begin{construction}[Gluing around an isolated singular Seifert fiber]  \label{const:cone_glue}
Let $N_1$, $N_2$ be copies of $N$ with flows $\psi_1^+$ as above, and glue $\{-\pi/2\}  \times [-\pi/2, 0] \times S^1$ of $N_1$ to the half-face $\{\pi/2\} \times [-\pi/2, 0] \times S^1$ of $N_2$ as above.   
Now glue the adjacent half-faces $\{-\pi/2\} \times [0, \pi/2] \times S^1$ of $N_1$ and $\{\pi/2\} \times [0, \pi/2] \times S^1$ of $N_2$ by the map
$(y_1, z_1) \mapsto (y_1, -z_1+1/2)$, where the $z$ coordinate is taken mod 1.  
Again, the flows glue together.  The $z$-coordinate direction gives a Seifert fibration on the resulting manifold with boundary, with isolated singular fiber given by the glued periodic orbit.  
\end{construction}

\begin{construction}[Gluing a reflector arc]  \label{const:arc_glue}
Let $N_1$, $N_2$ be copies of $N$ with flows $\psi_1^+$ and glue $\{-\pi/2\} \times [-\pi/2, 0] \times S^1$ of $N_1$ to the half-face $\{\pi/2\} \times [-\pi/2, 0] \times S^1$ of $N_2$ as above.    
Now, glue the half-face $\{-\pi/2\} \times [0, \pi/2] \times S^1$ of $N_1$ to itself by $(y_1, z_1) \mapsto (y_1, z_1 + 1/2)$, and similarly glue the half-face $\{\pi/2\} \times [0, \pi/2] \times S^1$ of $N_2$ to itself by  $(y_1, z_1) \mapsto (y_1, z_1+1/2)$.  This commutes with $X^+$ so gives a flow on the quotient manifold.  This manifold has a Seifert fiber structure with an arc of singular fibers.  
\end{construction} 

\begin{remark} \label{rem:plus_or_minus}
Note that all three gluing constructions can be performed with $X^-$ and $\psi^-$ used in place of  $X^+, \psi^+$, giving well-defined local model flows.  
\end{remark} 

\begin{definition}[Model flows and their flips]
We call a {\em model flow} any flow on a Seifert piece $P$ obtained by gluing blocks $N, \psi^+$ (or blocks $N, \psi^-$) as in 
Constructions \ref{const:regular_glue} - \ref{const:arc_glue}.    If $\phi^+$ is obtained by gluing copies of $N, \psi^+$, we denote by $\phi^-$ the flow obtained by the {\em same} gluings of $N$ but with $\psi^-$ instead of $\psi^+$ on each copy.  We call $\phi^-$ the {\em flip} of $\phi^+$, and refer to $\phi^\pm$ as a flipped pair.  
\end{definition}

\begin{proposition}[Periodic pieces are represented by models] \label{prop:nice_gluing} 
Let $P$ be a scalloped periodic Seifert piece of an Anosov flow $\phi$ in good position (equivalently, since all boundary surfaces are transverse in this case, represented by a small neighborhood of its spine). 
Then $(P, \phi)$ is orbit equivalent to a flow obtained by gluing copies of $(N, \psi^+)$ around periodic orbits as in Constructions \ref{const:regular_glue} -- \ref{const:arc_glue}.
\end{proposition}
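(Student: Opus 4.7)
The plan is to apply Theorem \ref{thm:spines_exist} to represent $P$ as an arbitrarily small neighborhood of its spine $B$, identify a tubular neighborhood of each elementary Birkhoff annulus in $B$ with a model block $(N,\psi^+)$, and then check that the gluings between adjacent blocks along periodic-orbit faces fall into the three constructions of Section \ref{subsec:model}.

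First, I would use that $P$ is scalloped and $\phi$ is Anosov to conclude that the fat tree $\cT'$ associated with $P$ is equal to $\cT$ (as discussed in the proof of Theorem \ref{thm:spines_exist}) and $4$-regular, with no prong singularities. Its lozenges correspond to the weakly embedded elementary Birkhoff annuli $A_1,\dots,A_m$ of $B$, and its corners to the periodic orbits of $\phi$ contained in $B$, which are precisely the Seifert fibers of $P$. Because each $A_i$ is elementary and weakly embedded, a small tubular neighborhood admits flowbox coordinates giving an orbit equivalence onto $(N,\psi^+)$, sending $A_i$ to the Birkhoff annulus $[-\pi/2,\pi/2]\times\{0\}\times S^1$ of $N$ (choosing $\lambda$ large enough to match the amount of shearing along $A_i$). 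To make these choices of $\psi^+$ rather than $\psi^-$ coherent across all blocks, I would first fix a continuous transverse orientation to the weak stable foliation along $B$ (which exists on a small neighborhood of the spine because the spine is a $2$-complex); the $y$-direction of each block is then taken to be this transverse direction, and $\psi^+$ is pinned down by a coherent global orientation of the Seifert fiber.

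Next, I would describe how two adjacent blocks glue along a shared periodic-orbit face, and how several blocks fit together around a single Seifert fiber $\gamma$ lifting to a corner $x_\gamma$ of $\cT'$. Two adjacent lozenges in $\cT'$ share an unstable or stable half-face at $x_\gamma$, and along this face the flow is simply the periodic $z$-flow on $\gamma$; direct inspection of $X^\pm$ shows the unique way to glue the adjacent half-faces of two copies of $N$ matching both flows and the in/out boundary data is the $z\mapsto -z$ identification of Construction \ref{const:regular_glue}. To cover every corner it then remains to analyze the action of $\mathrm{Stab}(x_\gamma)\subset\pi_1(P)$ on the four lozenges incident to $x_\gamma$: this stabilizer is infinite cyclic, generated by some $h$ with $h^k$ equal to the regular fiber, and since $h$ preserves the local bifoliation its action on the four quadrants is (i) trivial, (ii) a half-turn swapping opposite quadrants, or (iii) a reflection (only possible when $M$ is non-orientable or $P$ has a Klein bottle boundary). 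These three cases correspond exactly to Constructions \ref{const:regular_glue}, \ref{const:cone_glue} and \ref{const:arc_glue}, with the $z$-shifts by $1/2$ in the last two accounting for the nontrivial deck translation of $h$ along the fiber.

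The main obstacle is the bookkeeping in this last step: one must verify that the three possibilities above really exhaust what can happen, and that in cases (ii) and (iii) the resulting gluing map on the shared face is conjugate, by an orbit equivalence of $(N,\psi^+)$, to precisely $(y,z)\mapsto (y,-z+1/2)$ or $(y,z)\mapsto (y,z+1/2)$ respectively. Both points are local statements about the stabilizer's action near $\gamma$ and should follow from the classification of Seifert fibrations with boundary (regular fiber, isolated $\mathbb{Z}/2$ exceptional fiber, reflector circle) together with the fact that $h$ must preserve the cyclic order of the four quadrants around $\gamma$. The scalloped Anosov hypothesis is what reduces the combinatorics to these three cases by ruling out prong singularities and forcing $\cT'=\cT$, and it is also what allowed us to apply Theorem \ref{thm:spines_exist} to get the spine in the first place.
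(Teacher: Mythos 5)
Your plan matches the paper's strategy in its broad strokes: invoke Theorem \ref{thm:spines_exist}, observe that in the scalloped Anosov case $\cT'=\cT$ is a $4$-regular tree whose edges are lozenges (hence weakly embedded Birkhoff annuli) and whose vertices are lifts of fibered periodic orbits, identify slab neighborhoods of these annuli with model blocks, and then sort the gluings around corners into Constructions \ref{const:regular_glue}--\ref{const:arc_glue}. However, the mechanism you propose for making the $\psi^+$-versus-$\psi^-$ choices coherent has a real gap. You want to fix, globally along a neighborhood of the spine, a transverse orientation for the weak stable foliation together with a coherent orientation of the Seifert fiber, and then read off $\psi^+$ in each block. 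The parenthetical ``which exists on a small neighborhood of the spine because the spine is a $2$-complex'' is not a justification: the obstruction to co-orienting a codimension-one plane field lives in $H^1(\cdot;\mathbb{Z}/2)$, which is certainly nontrivial for a neighborhood of a fat graph, and nothing a priori rules out a loop along the spine reversing the co-orientation (nor is a coherent global fiber orientation guaranteed once Construction \ref{const:arc_glue} is in play). There is also a small but real coordinate confusion: near a periodic orbit the weak stable leaf of $\alpha_1$ in the block $N$ is the $(y,z)$-surface $\{-\pi/2\}\times[-\pi/2,0]\times S^1$, so the direction transverse to the weak stable foliation is $x$, not $y$ as you write; $y$ is pinned down by the flow direction (both $X^{\pm}$ increase $y$), so it is never a free choice. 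The paper sidesteps the whole global-orientation question: it puts coordinates on $\wt{P}\cong\Sigma'\times\R$ using a section of the circle bundle, fixes an orientation-preserving $(x,y)$-identification for each $2$-cell $\pi_1(P)$-equivariantly, and then lets the third-coordinate map be \emph{either} the identity \emph{or} $z\mapsto -z$, block by block, so as to make the pushed-forward vector field equal $X^+$. Since flipping $z$ swaps $X^+$ and $X^-$, this always works and requires no global cohomological vanishing.

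On the final bookkeeping step, your classification of $\mathrm{Stab}(x_\gamma)/\langle g\rangle$ acting on the four quadrants as trivial, half-turn, or reflection is essentially what the paper shows, but the reason the list stops at order two is that the stabilizer must send stable edges to stable edges and unstable to unstable; without stating this constraint, an order-$4$ rotation (and hence an order-$4$ cone point) would not be excluded, and your case analysis would be incomplete. The paper records this explicitly and concludes the base orbifold has only order-$2$ cone points and reflector arcs. Your proposal also leaves the $\pi_1(P)$-equivariance of the block identifications implicit; the paper's global product coordinates and section of the bundle are precisely what makes this equivariance clean to arrange. So: right blueprint, but the transverse-orientation shortcut would need to be either justified (it isn't obvious, and I would expect it to fail in general) or replaced by the paper's block-by-block choice of the $z$-identification.
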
 

We call such a flow obtained by gluing the model blocks a {\em model flow} on $P$.  

\begin{proof} 
The proof uses the description of spines for scalloped periodic Seifert pieces.  
Let $P$ be a scalloped periodic Seifert piece of flow $\phi$.   We need to put good coordinates on $P$ to identify it with glued copies of $N$.  

Recall from the proof of Theorem \ref{thm:spines_exist} that $\wt{P}$ is homeomorphic to a small neighborhood of the lift $\wt{B}$ of a weakly embedded Birkhoff surface $B$, the lift $\wt{B}$ is homeormophic to $\cT' \times \R$, and $\wt{P}$ is homeomorphic to $\Sigma' \times \R$, where $\cT'$ is the (pruned) tree, and $\Sigma'$ is a surface giving $\cT'$ a fatgraph structure.  
The vertices of $\cT'$ correspond exactly to the lifts of the periodic orbits, and edges are the lifts of Birkhoff annuli.   
We may give $\Sigma'$ a $\pi_1(P)$-equivariant simplicial structure as follows: take one 2-cell for each edge of $\cT'$, bounded by $6$ one-cells, such that the edges adjacent to vertices of $\cT'$ project to local stable and unstable manifolds of the periodic orbits as in Figure \ref{fig:cells}.  
\begin{figure} 
   \labellist 
    \small\hair 2pt
     \pinlabel $\cT$ at 5 115
    \pinlabel $\Sigma'$ at 45 95 
    \endlabellist
\centerline{ \mbox{
\includegraphics[width=7cm]{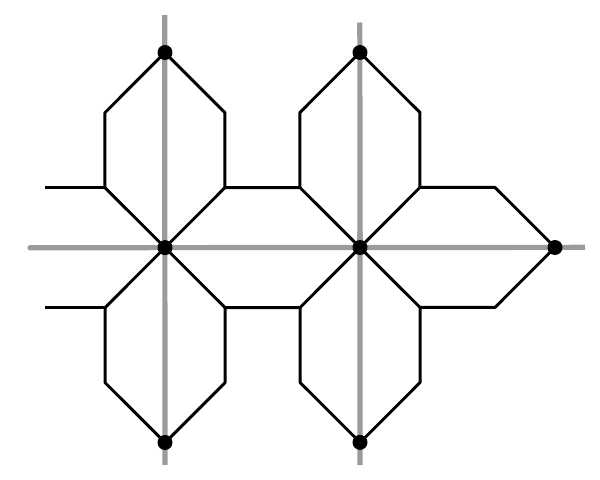}}}
\caption{Local structure of the 4-regular tree $\cT'$ (grey) and surface $\Sigma'$, indicating how to glue blocks.  The product of each 2-cell with $\R$ can be thought of as $\wt N$, where $\cT' \times \R$ is the lift of the Birkhoff annulus in the cell.}
\label{fig:cells}
\end{figure} 
This means that for each 2-cell $c$, the product $c \times \R$ with vector field generating the flow is homeomorphic to a copy of the universal cover $\wt{N}$ of the block $N$ with the lift of $X^+$.  The faces of $\wt{N}$ (including the half-faces bounded by periodic orbits) correspond to the boundary faces of $c \times \R$.     

We now describe how to put convenient global coordinates on $\wt{P}$ so that we can explicitly realize it as a union of copies of $\wt{N}$, with deck group acting so that the quotient is locally modeled on the gluings from constructions  \ref{const:regular_glue} --  \ref{const:arc_glue}. 
First, remove small disjoint fibered neighborhoods of each singular fiber,  and let $P_0$ denote the resulting space.  Then $P_0$ is a $S^1$ bundle over a surface with boundary.  Let $\sigma$ denote a section for this bundle structure, and think of $\sigma$ as embedded in $P$.

Lift $\sigma$ to a surface $\wt{\sigma}$ in the universal cover $\wt{P} \cong \Sigma' \times \bR$.   
Put coordinates on $\R$ so that $\wt{\sigma} \subset \Sigma' \times \{0\}$ and so that the action of an element $g$ of $\pi_1$ representing the regular fiber translates vertically by 1. 
 The fundamental group of $P$ is generated by $\pi_1(\sigma)$, the regular fiber, and for each removed neighborhood of an isolated singular fiber, appropriate generators of its fundamental group (in our case, we will soon see that the only singularities of the quotient orbifold are isolated cone points and reflector arcs, so these neighborhoods will all be solid tori or solid Klein bottles and have fundamental group $\bZ$).  See e.g. \cite{Scott} for background on generalized Seifert fiber structures, including the non-orientable setting.
 
 Adjusting our coordinates on $\R$ if needed, we may assume that all generators act by fiberwise isometries, i.e., maps of the form $(p, z) \mapsto (h_1(p), h_2(z))$ where $h_1$ is an automorphism of the pair $\cT', \Sigma'$ and $h_2$ an isometry of $\R$.  
The automorphisms of $\cT', \Sigma'$ must further send edges representing local stable (respectively, unstable) manifolds to local stable (resp. unstable) manifolds.  This means that any singular points of the orbifold given by the space of fibers in the Seifert fiber structure on $P$ are order 2 cone points or reflector arcs.   
Thus, the generators of neighborhoods of singular fibers 
  act by compositions of automorphisms of $\Sigma'$ and translations $z \mapsto z \pm 1/2$ in the vertical direction.

For each $2$-cell $c$ of $\Sigma'$, identify $c \times \bR$ with $\wt{N} = (I \times I) \times \bR$ respecting the product structures as follows.  
First, fix an identification of $c$ with $I \times I$ preserving orientation and sending $\cT' \cap c$ to $[-\pi/2,\pi/2] \times \{0\}$ and incoming/outgoing and stable/unstable manifold edges of $c$ to the corresponding edges of $I \times I \times \{0\}$ in $N$.  This defines a map in the $(x,y)$ coordinates on $I \times I$, extend this to be either the identity or $z \mapsto -z$ on the third coordinate, depending on the orientation of the vertical orbits in $c \times \bR \subset \wt P$ so that they match the image.   We may choose the $(x,y)$ coordinate map to be equivariant with respect to the action of $\pi_1(P)$ on $\Sigma'$ by first specifying it on a fundamental domain.  Using this identification, the lift of the vector field $X^+$ to $\wt{N}$ agree on the glued faces (as does $X^-$).  

Thus, we have a copy of $\Sigma' \times \bR \cong \wt{P}$ with globally defined vector field obtained by glued copies of $X^+$ (or by $X^-$) that descends to the quotient by the action of $\pi_1(P)$.  The induced flow on the quotient manifold, by construction, has the same spine as $\phi$ on $P$.  Again, by construction, the local model in the quotient about a vertex of $\cT$  is exactly one of those described in Constructions \ref{const:regular_glue} -- \ref{const:arc_glue}.  
\end{proof} 

\begin{corollary} 
Let $\phi$ be an Anosov flow and $\phi_P$ the restriction of $\phi$ to a scalloped periodic Seifert piece $P$, assumed in good position.  Then $\phi_P$ has a flip, i.e., a flow on $P$ which has the same spine but periodic orbits in opposite direction.
\end{corollary}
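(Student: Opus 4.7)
The plan is to derive this corollary directly from Proposition \ref{prop:nice_gluing} together with Remark \ref{rem:plus_or_minus}. First, I would apply Proposition \ref{prop:nice_gluing} to identify $\phi_P$, up to orbit equivalence, with a model flow $\phi^+$ built by gluing copies of $(N,\psi^+)$ according to the recipes in Constructions \ref{const:regular_glue}, \ref{const:cone_glue}, \ref{const:arc_glue}. This reduces the problem to a purely combinatorial/coordinate one: the flow on $P$ is encoded by a gluing pattern on finitely many blocks, and the spine is realized as the union of the (glued) annuli $B = [-\pi/2,\pi/2] \times \{0\} \times S^1$ in these blocks, with periodic orbits being the images of the loops $\{\pm \pi/2\} \times \{0\} \times S^1$.

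Next, I would form the candidate flipped flow $\phi^-$ by performing the \emph{same} gluing operations but with $(N,\psi^-)$ in each block, which is permitted by Remark \ref{rem:plus_or_minus}. To see that this yields a well-defined flow, one checks that on every glued face the two copies of $X^{-}$ still match. This is immediate from the form of the gluing maps: each of them either flips the $z$-coordinate (as in Constructions \ref{const:regular_glue} and \ref{const:cone_glue}) or only translates it (Construction \ref{const:arc_glue}), and in each case the relevant face lies in $\{x = \pm \pi/2\}$ where $\dot z = \pm\lambda \sin(x)\cos(y)$ is controlled by the sign conventions already imposed by the original gluing of $X^+$. Since the compatibility condition is preserved under the global change $\psi^+ \rightsquigarrow \psi^-$, we obtain a globally defined, smooth flow $\phi^-$ on the same manifold $P$.

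Finally, I would read off the two remaining properties. By construction the singular set of periodic orbits for $\phi^-$ is identical to that of $\phi^+$, namely the union of images of $\{\pm\pi/2\}\times\{0\}\times S^1$ in the glued blocks; the Birkhoff annuli $B$ carry over unchanged; and the combinatorial pattern of adjacencies is the same. Hence $\phi^-$ has the same spine as $\phi^+$. On the other hand, Observation \ref{obs:properties_model_flip} states that within each block, $\psi^-$ traverses each periodic orbit $\alpha_i$ in the direction opposite to $\psi^+$; since orientations of periodic orbits in the spine are determined block by block, every such orbit in the spine of $\phi^-$ is reversed with respect to $\phi^+$. Thus $\phi^-$ is the desired flip of $\phi_P$.

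I do not expect a genuine obstacle here: everything is a direct consequence of Proposition \ref{prop:nice_gluing}. The only point worth being careful about is the compatibility of the gluing maps with the sign change in the $z$-component of $X^\pm$, but this is a mechanical verification that was implicitly already done in setting up Constructions \ref{const:regular_glue}--\ref{const:arc_glue} symmetrically in the sign of $\dot z$.
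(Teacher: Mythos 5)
Your proof is correct and follows the same route as the paper: identify $\phi_P$ with a model flow via Proposition \ref{prop:nice_gluing}, replace $X^+$ with $X^-$ in each block, and invoke Remark \ref{rem:plus_or_minus} and Observation \ref{obs:properties_model_flip} to conclude the spine is preserved but orbit directions reverse. Your additional check that the gluing maps remain compatible after the sign change is a reasonable elaboration of what the paper delegates to Remark \ref{rem:plus_or_minus}, but does not change the argument.
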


\begin{proof}
By Proposition \ref{prop:nice_gluing}, up to orbit equivalence $\phi_P$ is obtained by gluing copies of $N, \psi^+$ as in \ref{const:regular_glue} -- \ref{const:arc_glue}.   Let $\phi^-_P$ be the flow of the vector field obtained by performing the exact same gluings of copies of $N$, but replacing $X^+$ with $X^-$ in each copy.  This defines a flow on $P$ by Remark \ref{rem:plus_or_minus}.  
By construction $\phi^+_P$ and $\phi^-_P$ have the same spine, but the direction of each periodic orbit is reversed.  
\end{proof} 

\begin{rem}\label{rem_orbits_between_boundary}
Notice that the changed directions of periodic orbits of $\phi^+_P$ and $\phi^-_P$ are the only dynamical differences between the model and its flip in the following sense: there exists an orbit of $\phi^+_P$ between two boundaries of $P$ if and only if there also exists one for $\phi^-_P$. Moreover, these orbits are freely homotopic relative to boundary, i.e., this property holds even for lifts of boundary surfaces in the universal cover of $P$.
\end{rem}

While we have shown how to ``flip" a piece, we have {\em not} yet shown that a flip $\phi^-_P$ can be glued back into the manifold $M$ to produce an Anosov flow.   This is the goal of the next section.

\section{Building Periodic Seifert flips by gluing} \label{sec:build_flips}
In this section we first show that we can glue transverse boundary components of a model $(P, \phi^\pm_P)$ to each other tol obtain  {\em hyperbolic plugs} in the sense of \cite{BBY} (recalled below), provided that the parameter $\lambda$ is chosen sufficiently large.   
We then show how to use this structure to perform a Seifert flip of a flow on a general 3-manifold that admits a scalloped periodic Seifert piece, proving Theorem \ref{thm:existence}.  The uniqueness of flips (Theorem \ref{thm:flip_unique}) will be proved in Section \ref{sec:per_orbits}.

The reason for this two-step approach is because 
a Seifert piece $P$ of a manifold can have some of its boundary components glued to each other, and the others glued to other pieces.  Thus, there are two types of gluings that we need to do in order to insert model flows and flips into a manifold.  

\subsection{Hyperbolic plugs and BBY-gluing}

Recall the definition of \emph{hyperbolic plugs}:
A hyperbolic plug is a $3$-manifold with boundary $V$ equipped with an Anosov flow $\phi$ that is transverse to the boundary and such that its maximal invariant set $\Lambda$ is non-empty and hyperbolic.
Notice that the stable manifold of $\Lambda$ induces a lamination $\cL_\phi^s$ on the incoming boundary $\partial_{in} V$ of $(V,\phi)$. Similarly, the unstable manifold of $\Lambda$ induces a lamination $\cL^u_\phi$ on the outgoing boundary $\partial_{out} V$ of $(V,\phi)$.

The model flows constructed in the previous section are already examples: 
\begin{lemma}
Any model flow $(P, \phi^+)$ or $(P, \phi^-)$ on a Seifert piece is a hyperbolic plug.  Moreover, if $\phi^\pm$ is a flipped pair, then $\cL_{\phi^+}^s = \cL_{\phi-}^s$ and the unstable boundary laminations agree also. 
\end{lemma}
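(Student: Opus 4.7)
The plan is to verify, in turn, the three defining properties of a hyperbolic plug and then compare the boundary laminations of a flipped pair.

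I first identify $\partial P$ and check transversality. By Constructions \ref{const:regular_glue}--\ref{const:arc_glue}, every gluing of model blocks is performed along the flow-invariant faces $\{\pm\pi/2\}\times I\times S^1$; hence $\partial P$ is a union (possibly with self-identifications) of the transverse faces $I\times\{\pm\pi/2\}\times S^1$. On any such face one has $\dot y=\cos^2 x+\sin^2 x=1>0$, which is exactly transversality, and this identifies $\partial_{in}P$ with the $y=-\pi/2$ faces and $\partial_{out}P$ with the $y=\pi/2$ faces.

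Next I would pin down the maximal invariant set $\Lambda$. The quantity $\dot y=\cos^2 x+\sin^2 y\sin^2 x$ is nonnegative and vanishes exactly at $(x,y)=(\pm\pi/2,0)$, i.e., on the periodic orbits $\alpha_i$ of Observation \ref{obs:properties_model_flip}. Hence every non-periodic orbit has strictly increasing $y$ and eventually reaches a $y=\pm\pi/2$ face: either within the interior of a single block, where $\dot y$ is bounded below away from zero once $|x|$ is bounded away from $\pi/2$, or along an invariant $x$-face, where the reduced dynamics $\dot y=\sin^2 y$ admits $y=0$ as its only bounded orbit while every other orbit exits through an edge lying in $\partial P$. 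Because inter-block gluings occur only along $x$-faces (not $y$-faces), this local picture globalizes and $\Lambda$ is precisely the finite, nonempty disjoint union of the glued circles $\alpha_i$.

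Hyperbolicity of $\Lambda$ is the one genuinely delicate point, and I expect it to be the main technical obstacle. Since $\Lambda$ is a disjoint union of periodic orbits, it suffices to show each is hyperbolic. Within a single block the $(y,z)$-dynamics at $\alpha_i$ is parabolic in $y$, and so a single block contributes only a degenerate shear to the local return map. However, gluing two or four blocks around a fixed periodic orbit composes several such shears with alternating sign flips in $z$, and the accumulated shear on a transverse disc has modulus growing with the parameter $\lambda$. One would then verify that, for $\lambda$ sufficiently large, the full Poincar\'e return map around each periodic orbit has one eigenvalue of modulus $>1$ and one of modulus $<1$, giving hyperbolicity of each periodic orbit and hence of $\Lambda$. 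This is the kind of explicit coordinate estimate one would prefer to perform once (presumably in a separate lemma) and cite thereafter.

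For the final statement, observe that $X^+$ and $X^-$ agree in their $\dot x$ and $\dot y$ components, and that every gluing in Constructions \ref{const:regular_glue}--\ref{const:arc_glue} preserves the $y$-coordinate while acting on $z$ only by a sign flip or a translation. Therefore the two flows project to the same flow after forgetting the $z$-coordinate of each block. Since exits from $P$ occur only through $y=\pm\pi/2$ faces, the condition ``the forward (respectively backward) orbit of $p$ remains in $P$'' depends only on this projected flow, and is therefore the same for $\phi^+$ and $\phi^-$. By hyperbolicity, this forward-trapped set on $\partial_{in}P$ is precisely $\cL^s_{\phi}=W^s(\Lambda)\cap\partial_{in}P$ (and analogously for $\cL^u_\phi$ on $\partial_{out}P$), so the equalities $\cL^s_{\phi^+}=\cL^s_{\phi^-}$ and $\cL^u_{\phi^+}=\cL^u_{\phi^-}$ follow.
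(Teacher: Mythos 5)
Your identification of $\partial P$, the transversality check, and the determination of $\Lambda$ are all sound and are given in more detail than the paper's proof, which simply notes that the vertical orbits are the only ones entirely contained in $P$. Your argument that $\cL^s_{\phi^+}=\cL^s_{\phi^-}$ and $\cL^u_{\phi^+}=\cL^u_{\phi^-}$ --- by observing that $X^+$ and $X^-$ share their $(\dot x,\dot y)$ components and that the gluings act on $z$ alone, so the forward- and backward-trapped subsets of $\partial P$ coincide for the two flows --- is a clean reformulation of the paper's appeal to Observation~\ref{obs:properties_model_flip}, and is correct.

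However, the mechanism you propose for hyperbolicity of the fiber orbits does not work. You write that "gluing two or four blocks around a fixed periodic orbit composes several such shears with alternating sign flips in $z$, and the accumulated shear on a transverse disc has modulus growing with $\lambda$." Since $\dot x=0$, a point on a small transverse disc to a glued fiber orbit stays inside a single block during its entire return trip around $S^1$, so nothing is composed across blocks. Computing in one block with $u=x+\pi/2$, $v=y$ near $\alpha_1$, the return time is $\approx 1/\lambda$ and the return map is $(u,v)\mapsto \bigl(u,\ v+(u^2+v^2)/\lambda + \cdots\bigr)$, whose linearization at the origin is the identity; increasing $\lambda$ moves this map \emph{closer} to the identity, so the literal model vector field is not $C^1$-hyperbolic at the fiber orbits for any choice of $\lambda$. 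The $\lambda$-shear is exactly what gives hyperbolicity, but for the hitting maps $h\circ f_{out,in}$ between boundary tori crossed by non-fiber orbits, as in the proof of Lemma~\ref{lem_partial_gluing_periodic_piece} --- not for the return map around a fiber orbit. The paper's proof itself only asserts that "$\phi^\pm$ are already hyperbolic on their maximal invariant sets"; the implicit resolution is that the model flow is defined only up to orbit equivalence and should be smoothed in a small tubular neighborhood of each fiber orbit (keeping the orbits, their local stable/unstable faces, the Birkhoff annuli, and the entry/exit boundary data fixed) so that these orbits become genuine hyperbolic saddles. None of the downstream uses of the plug depend on more than this topological data, so your overall structure is fine once this step is repaired.
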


\begin{proof} 
By construction the only orbits of $\phi^\pm$ that are completely contained in $P$ are the vertical orbits, i.e., those in the fiber (or $z$-coordinate) direction.   The union of periodic orbits is the maximal invariant sets for each flow, so $\phi^\pm$ are already hyperbolic on their maximal invariant sets.  The stable and unstable manifolds of the periodic orbits agree for $\phi^+$ and $\phi^-$, since, as noted in Observation \ref{obs:properties_model_flip}, this is true of the flows $\psi^\pm$ on $N$, and our gluing preserves stable and unstable.  Thus, the boundary laminations of $\phi^+$ and $\phi^-$ agree.  
\end{proof} 

We would like to also retain the property of being a plug after possibly gluing some boundary components of these pieces together.  The following lemma shows that this is indeed the case. 

\begin{lemma}\label{lem_partial_gluing_periodic_piece}
 Let $\phi_P$ be a model flow on a scalloped periodic Seifert piece $P$. 
 Let $T\in \partial_{out}P $ and $T'\in \partial_{in}P $ be a union of tori or Klein bottles in the boundary of $P$ on which the flow is, respectively, outgoing and incoming.  Let $h\colon T \to T'$ be a map which is linear on each component, in the sense that the derivative of $h$ is constant in the $(x,z)$-coordinates coming from each model block. 
Assume no closed leaf of $h(\cL^u|_T)$ is homotopic in $T'$ to a closed leaf of $\cL^s|_{T'}$. Then the flow induced by $\phi_P$ on $\bar{P} = P/(T\sim_h T')$ is a  hyperbolic plug.
\end{lemma}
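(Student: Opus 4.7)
The plan is to verify that $\bar P$ with the induced flow satisfies the three conditions of a hyperbolic plug: (i) transversality of the flow to $\partial \bar P$; (ii) non-emptiness of the maximal invariant set $\bar \Lambda$; and (iii) hyperbolicity of $\bar \Lambda$. The first two are essentially immediate, and the third will follow from the gluing theorem of B\'eguin--Bonatti--Yu \cite{BBY}.

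For (i), the boundary $\partial \bar P$ is precisely the portion of $\partial P$ not involved in the identification $T \sim_h T'$, which by construction of the model blocks consists of incoming and outgoing faces along which $\phi_P$ is transverse. For (ii), the periodic orbits of $\phi_P$ (the Seifert fibers through the corners of the spine described in Proposition \ref{prop:nice_gluing}) lie in the interior of $\bar P$ and remain periodic in the glued flow, so $\bar\Lambda \supset \Lambda_{\phi_P} \neq \emptyset$.

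The heart of the argument is (iii). I would invoke the gluing theorem of \cite{BBY}, applied to the plug $(P,\phi_P)$ glued to itself along $T \subset \partial_{out} P$ and $T' \subset \partial_{in} P$ via $h$. That theorem guarantees that the glued flow is a hyperbolic plug provided the boundary laminations are in \emph{strong transverse position}: a condition which, for laminations whose closed leaves are isolated (as here, where every closed leaf of $\cL^s|_{T'}$ or $\cL^u|_T$ is the intersection of the stable/unstable manifold of a periodic orbit with the boundary), reduces to the statement that no closed leaf of $h(\cL^u|_T)$ is homotopic in $T'$ to a closed leaf of $\cL^s|_{T'}$. This is exactly the hypothesis of the lemma. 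The linearity assumption on $h$ in the $(x,z)$-coordinates of each block ensures that $h$ sends the closed leaves of $\cL^u|_T$ to closed curves in $T'$ whose homotopy classes are explicitly computable from the linear data, so the hypothesis is verifiable in coordinates; it also ensures the gluing is smooth enough for the BBY smoothing procedure to apply without altering the dynamics on the maximal invariant set. Taking the shear parameter $\lambda$ in the model blocks sufficiently large guarantees the cone fields on $P$ used to detect hyperbolicity in \cite{BBY} are strict enough to survive the identification.

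The main obstacle is reconciling our possibly non-orientable setting (where $T$ or $T'$ may be Klein bottles) with the statement of \cite{BBY}, which is given for orientable 3-manifolds. This is addressed by passing to the orientable double cover of $\bar P$: the involution defining the cover commutes with the construction, so one can apply BBY upstairs and descend the resulting hyperbolic splitting to $\bar P$. The strong-transversality condition and the non-homotopic-closed-leaves hypothesis both lift to the cover, so the conclusion is preserved.
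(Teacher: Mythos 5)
Your framing into (i)--(iii) is sensible, and (i), (ii) are indeed immediate; but your argument for the central point (iii) has a genuine gap. You appeal to ``the gluing theorem of \cite{BBY}'' as if it states that gluing a hyperbolic plug to itself along a strongly transverse map $h$ produces a hyperbolic plug. No statement of that form exists in \cite{BBY}. Their Proposition~1.1 yields a plug from a gluing only when the gluing creates \emph{no new recurrent orbits}; identifying $T\subset\partial_{out}P$ with $T'\subset\partial_{in}P$ certainly creates new recurrences (orbits that loop through the glued surface), so Proposition~1.1 does not apply. Their Theorem~1.5, which does handle gluings creating recurrences, produces a closed manifold, requires the plug to be \emph{filling} and saddle, and -- as Remark~\ref{rem:not_unique} in this very paper emphasizes -- requires first performing a strong isotopy of both the flow and the gluing map; it does not assert that the specific linear $h$ and the unperturbed flow $\phi_P$ already give a hyperbolic invariant set. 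You cannot keep the explicit linear $h$ fixed (which the lemma requires, since it will be fed into the later constructions) \emph{and} cite that machinery to get hyperbolicity for free. Your remarks that linearity makes the ``smoothing procedure'' innocuous and that large $\lambda$ helps are the right instincts, but as stated they are assertions, not an argument.

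The paper's proof of this lemma does not route through \cite{BBY} at all. Instead it builds a surface of section $\Sigma\cup\pi(T)$ from small transverse discs around the vertical periodic orbits together with the image of $T$ in $\bar P$, reduces hyperbolicity of the maximal invariant set to uniform hyperbolicity of the first-return map, and then constructs explicit stable/unstable cone fields directly in the $(x,z)$-coordinates of the model blocks: the unstable cone is centered on $h_*(\partial/\partial z)$ and chosen thin enough not to contain $\partial/\partial z$, and one checks that the expansion in that cone under $h\circ f_{out,in}$ grows without bound in $\lambda$. This is where your two hypotheses actually do their work: linearity of $h$ makes the cone computation explicit and uniform, and the no-homotopic-closed-leaves hypothesis is precisely what guarantees $h_*(\partial/\partial z)$ is not parallel to $\partial/\partial z$, so that such a cone can be chosen. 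To make your proof correct you would need to carry out this cone estimate (or the equivalent one from \cite{BF_totally_per}) rather than cite \cite{BBY}.
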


This result is proven in \cite{BF_totally_per} when the manifold is orientable and for the case that {\em all} the incoming tori are glued to the outgoing tori, in which case one obtains a true Anosov flow instead of just a hyperbolic plug. The same strategy works in our slightly more general case, we give a sketch for convenience, using slightly different arguments.  

\begin{proof}
First we define a surface of section and we set some notation.  For each orbit representing a fiber in $P$ 
take a small closed disc centered on a point of the orbit, transverse to the flow, chosen such that these discs are pairwise disjoint.  Let $\Sigma$ denote the union of these transverse discs.  Thus, $\Sigma$ is a compact surface with the property that every orbit in the maximal invariant $\Lambda$ intersects the interior of $\Sigma$, and such that the return time is uniformly bounded --- i.e., it is a {\em local section} of $\phi_P$ in $P$. 
Let $\bar{P} = P/(T \sim_h T')$, let $\pi \colon P \to \bar{P}$ denote the quotient map, and let $\bar{\phi}$ denote the induced flow on $\bar{P}$.  
By construction,  $\Sigma \cup \pi(T)$ is a surface of section of $\bar{\phi}$ on $\bar{P}$. 

Let $f \colon \Sigma \cup \pi(T) \to \Sigma \cup \pi(T)$ be the first return map of $\bar{\phi}$. It is standard that showing uniform hyperbolicity of the map $f$ is enough to show hyperbolicity of the flow $\bar{\phi}$ on its maximal invariant.
The flow $\phi_P$ also gives partially defined ``return" or hitting maps  between subsurfaces, which we denote as follows.  
Let $f_{out, in} \colon T' \to T$ (defined only on the complement of the stable foliation of the vertical orbits) be the first return or hitting map of orbits of $\phi_P$ from $T'$ to $T$, and similarly let also $f_\Sigma\colon \Sigma \to \Sigma$, $f_{\Sigma, in} \colon T' \to \Sigma$ and $f_{out, \Sigma} \colon \Sigma \to T$ denote the hitting maps between these subsurfaces, restricted to the domains on which they are defined.  
Note that $f$ lifts to a map in $P$ that piecewise agrees with either $f_\Sigma$, $f_{\Sigma, in}$, $h \circ f_{out, \Sigma}$ or an appropriate restriction of $h \circ f_{out, in}$.
 
With this setup, the proof consists in showing that, for a large enough choice of parameter $\lambda$ in the definition of the model flow, the map $h \circ f_{out, in}$ is hyperbolic. Since the vertical (i.e., fiber direction) orbits are also hyperbolic, taking a small enough section $\Sigma$ will then give the hyperbolicity of $f$ and ends the proof. 
 
Hyperbolicity of $h \circ f_{out, in}$ is proved in \cite[Sec.~8 pp.~1944--45]{BF_totally_per} with an argument that applies directly here. In brief, by  using the $(x,z)$-coordinates given by the model flow from $N$ on each connected component of the domain of $f_{out,in}$ in $T'$, one shows that a good choice of unstable cone is one centered around $h_\ast(\partial/\partial z)$, and thin enough so that it does not contain $\partial/\partial z$. Similarly, a good stable cone is centered around $h ^{-1}_\ast(\partial/\partial z)$ and does not contain $\partial/\partial z$. It is then easy to see that for $\lambda \gg 1$ the expansion in the unstable cone under the map $h \circ f_{out, in}$ is as large as we want, and same for the stable cones under the inverse.
\end{proof}

\subsection{Existence of flips: Proof of Theorem \ref{thm:existence}} \label{sec:existence}
We return now to our main goal.  Namely, we suppose $M$ is a closed 3-manifold with Anosov flow $\phi$ and scalloped periodic Seifert piece $P$.   In the previous section, we constructed a pair of model flows $\phi^{\pm}_P$ on $P$ (with entering/exiting boundary) with the same spine as $\phi$ on $P$, and we now want to glue these together with the restriction of $\phi$ to $M \smallsetminus P$ to produce a pair of flows on $M$.  
We have also shown that $\phi^\pm_P$ are  \emph{hyperbolic plugs} in the sense of \cite{BBY}.  The only issue to gluing them using the techniques of \cite{BBY} is that the condition of having \emph{filling laminations}, which is necessary to use Theorem 1.5 of \cite{BBY} directly, is generally not satisfied here. This issue will be sidestepped by being more careful about the use of \cite{BBY}, an idea suggested to us by Fran\c{c}ois B\'{e}guin.  By doing this, we prove the following, which combined with Remark \ref{rem:non_transitive_BBY} below immediately gives Theorem \ref{thm:existence}

\begin{proposition} \label{prop_existence_of_flip}
Let $\phi$ be a transitive Anosov flow on $M$ and $P$ a scalloped periodic Seifert piece.  Let $\phi^{\pm}_P$ be associated model flows on $P$ with the same spine as $\phi|_P$, as constructed in Proposition \ref{prop:nice_gluing}.
Then the restriction of the flow $\phi|_{M\setminus P}$ on $M\setminus P$ and $\phi^\pm_P$ on $P$ may be glued along boundary components to produce flows on $M$ that are Anosov, have the same spines as $\phi$ on $P$ and such that their restrictions to $M\setminus P$ are isotopic to that of $\phi|_{M\setminus P}$.
\end{proposition}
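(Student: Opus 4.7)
The plan is to view $(M \setminus P, \phi|_{M\setminus P})$ and $(P, \phi^{\pm}_P)$ as two hyperbolic plugs with matching boundary laminations, and then invoke the gluing theorem of \cite{BBY} to produce an Anosov flow on $M$ realizing the desired properties. The natural gluing map to use is the identity, i.e., the same identification of $\partial P$ used to embed $P$ into $M$ in the first place.

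First I would verify that both pieces are hyperbolic plugs. Since $P$ is scalloped, Proposition \ref{prop:good_representative} lets us take $\partial P$ transverse to $\phi$, so $(M \setminus P, \phi|_{M\setminus P})$ is an Anosov flow transverse to its boundary, hence a hyperbolic plug. For $(P, \phi^{\pm}_P)$, the explicit block description in Section \ref{sec_model_flow} makes the maximal invariant set the union of vertical periodic orbits, hyperbolic provided $\lambda \gg 1$. If some boundary components of $P$ are identified to each other inside $M$, one first performs these internal identifications via Lemma \ref{lem_partial_gluing_periodic_piece}, which preserves the plug property and leaves a new plug whose boundary is exactly the interface with $M \setminus P$.

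Next I would check that the boundary laminations match under the identity gluing. By Proposition \ref{prop:nice_gluing}, $\phi^{\pm}_P$ is constructed so as to share a spine with $\phi|_P$, so the two flows have the same periodic orbits, the same Birkhoff annuli, and (by Observation \ref{obs:properties_model_flip}, which ensures $X^+$ and $X^-$ have identical stable/unstable faces on each block) the same local stable and unstable manifolds of the spine orbits. Consequently the laminations $\cL^{s,u}_{\phi^{\pm}_P}$ and $\cL^{s,u}_{\phi|_P}$ induced on $\partial P$ coincide, and therefore match those induced by $\phi|_{M\setminus P}$ on the other side under the natural identification. Applying \cite[Theorem 1.5]{BBY} to the two plugs then yields Anosov flows $\psi^{\pm}$ on $M$ whose restrictions to $P$ and $M \setminus P$ are isotopy-equivalent to $\phi^{\pm}_P$ and $\phi|_{M \setminus P}$ respectively; the spine in $P$ is preserved because $\phi^{\pm}_P$ has the required spine by construction.

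The main obstacle I expect is the \emph{filling lamination} hypothesis of \cite[Theorem 1.5]{BBY}, which is not automatic here: scalloped periodic pieces always contain closed leaves of the boundary laminations (the periodic orbits of the spine), so the combined laminations on a gluing torus need not be filling in the strict sense. This is where I would use transitivity of $\phi$, following the strategy suggested by B\'{e}guin and alluded to in the excerpt. The existence of $\phi$ as an actual Anosov flow obtained by gluing these same two plugs with these same boundary laminations certifies that any obstructions BBY's theorem would detect are absent; more concretely, one can either invoke a version of the BBY gluing theorem adapted to plugs whose boundary laminations have compact leaves but satisfy a weaker transversality, or alternatively perturb the gluing on each toral component by a Dehn twist away from the compact leaves to achieve the filling condition and then argue that the perturbation does not change the isotopy class of the output flow. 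Either way, the core content is handled by \cite{BBY} and the role of the present proposition is to set up its hypotheses, which match step-by-step with our constructions of the spine and the models $\phi^{\pm}_P$.
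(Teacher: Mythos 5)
Your overall framing is right---use BBY's gluing theorem, handle internal boundary identifications of $P$ via Lemma \ref{lem_partial_gluing_periodic_piece}, and check that the boundary laminations of $\phi^{\pm}_P$ match those of $\phi|_P$---and you correctly flag the filling-lamination hypothesis of \cite[Theorem 1.5]{BBY} as the real obstacle. But your proposed resolution of that obstacle does not work and misses the actual argument. The existence of $\phi$ itself as an Anosov flow does not ``certify'' that the hypotheses of BBY's theorem hold for the plug you want to glue; BBY's theorem is sufficient, not necessary, so one cannot argue backwards from the conclusion. Your two concrete alternatives also fail: there is no published version of the BBY gluing theorem with a ``weaker transversality'' hypothesis covering non-filling laminations (the paper explicitly cites forthcoming work of Paulet only to drop the \emph{saddle} hypothesis, not the filling one); and modifying the gluing map by a Dehn twist would change the topology of $M$, and in any case the filling condition is a property of the lamination on each boundary component individually, which a change of gluing map does not alter.

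What the paper actually does is reorganize the gluing into three stages so that the filling condition becomes verifiable, and then proves it directly. Write $Q = M \setminus P$, and separate the boundary of $P$ into the part glued internally ($P$-to-$P$) and the part glued to $Q$. Step one glues the internal boundaries of $P$ using Lemma \ref{lem_partial_gluing_periodic_piece}, yielding a plug $\bar P$. Step two glues $\partial_{out} Q$ to $\partial_{in} \bar P$, obtaining a plug $R$ by \cite[Proposition 1.1]{BBY} (this only needs transversality of laminations, not filling). Step three glues $\partial_{out} R = \partial_{out} \bar P$ to $\partial_{in} R = \partial_{in} Q$ using \cite[Theorem 1.5]{BBY}. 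Transitivity of $\phi$ is invoked here to rule out attractor/repeller basic sets. The key point---which your argument omits entirely---is that the boundary laminations of $R$ \emph{are} filling even though those of $P$ or $Q$ alone need not be: given a complementary region $A$ of the unstable lamination in $\partial_{out} R$, its backward orbit must cross one of the scalloped cutting surfaces between $Q$ and $\bar P$, and the scalloped structure forces the image of $A$ on that surface to be a rectangle bounded by stable and unstable lamination leaves. This shows each $A$ is a strip in the sense of \cite[Definition 3.11]{BBY}, hence the lamination is filling. Your write-up needs this argument (or an equivalent one) to be complete.
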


\begin{rem}[Non-transitive setting] \label{rem:non_transitive_BBY}
 As will become apparent in the proof, the only reason we ask $\phi$ to be transitive in Proposition \ref{prop_existence_of_flip} is in order to be able to apply Theorem 1.5 of \cite{BBY}, where the assumptions are that a hyperbolic plug needs to be both filling and saddle, in order for the gluing to lead to an Anosov flow. However, the assumption of having a saddle is not actually needed for \cite[Theorem 1.5]{BBY} to hold,
and a version of this (in a much more general context) is proved in forthcoming work of Neige Paulet \cite{Paulet}. 
Consequently, Proposition \ref{prop_existence_of_flip} holds also for non-transitive Anosov flows.
\end{rem}

\begin{proof}[Proof of Proposition \ref{prop_existence_of_flip}]
Let $\phi$ and $P$ be as in the statement of the proposition.  Let $Q = M\smallsetminus P$. We emphasize that $Q$ is not necessarily a piece of the JSJ decomposition of $M$. 
If $Q = \emptyset$ then the proposition already holds by Lemma \ref{lem_partial_gluing_periodic_piece}. The argument is exactly the same for $\phi^+_P$ and $\phi^-_P$ so we work only with $\phi^+_P$ from here on.  
  
Since $P$ is a scalloped Seifert piece, the boundary surfaces of $P$ and $Q$ are all transverse to $\phi$.  As above, we denote by $\partial_{in}Q$, $\partial_{out}Q$ the boundaries of $Q$ where the flow respectively enters or exists $Q$. In the piece $P$, we further subdivide the boundary components into four classes: $\partial_{in}^Q P$, $\partial_{out}^Q P$ which are the boundaries where the flow $\phi$ goes from $Q$ to $P$ and from $P$ to $Q$ respectively, and $\partial_{in}^P P$, $\partial_{out}^P P$ which are the incoming and outgoing boundaries where the flow goes from $P$ to $P$.  It is possible that $\partial_{in}^P P$ and $\partial_{out}^P P$ are empty (in which case the work from Lemma \ref{lem_partial_gluing_periodic_piece} is not needed in the proof).  Note that no boundaries of $Q$ are glued together.

For each pair $S, S'$ with $S \in \partial_{out} Q \cup \partial_{out}^Q P \cup \partial_{out}^P P$ and $S' \in \partial_{in}Q \cup \partial_{in}^Q P \cup \partial_{in}^P P$, such that $S$ and $S'$ represent the same surface in $M$, we let $h_{S,S'} \colon S \to S'$ denote the gluing map, i.e. the map which projects to the identity between the images of $S$ and $S'$ in $M$.


First we treat the gluings between boundaries of $P$.  If $S$ and $S'$ are two such surfaces, 
the definition of scalloped says that $\pi_1(S) \simeq \pi_1(S')$ has fundamental group generated by periodic orbits.  These correspond to closed unstable (respectively stable) leaves in $S$ and $S'$ (respectively).  Thus, $h_{S, S'}$ does not send the free homotopy class of any closed stable leaf to a closed unstable.   By Lemma \ref{lem_partial_gluing_periodic_piece}, we can glue $S$ to $S'$ using a map 
that is linear with respect to the coordinates $(x,z)$ coming from the model flow on each block $N$ described in section \ref{sec_model_flow}, which is in the isotopy class of $h_{S,S'}$; and doing this for all surfaces $S, S'$ in $\partial_{out}^P P$  and $\partial_{in}^P P$ (respectively) we can obtain a hyperbolic plug $\bar{P}, \bar{\phi}$ from $\phi^+_P$. Notice that for this new plug, $\partial_{in}\bar P = \partial_{in}^Q P$ and $\partial_{out} \bar P = \partial_{out}^Q P$.

Then, we glue the outgoing surfaces in $\partial_{out} Q$ for the flow $\phi_Q$ to the corresponding ones in $\partial_{in}\bar P = \partial_{in}^Q P$ for the flow $\bar\phi$ using the original gluing maps. Since the transversality of the laminations is preserved and there are no new recurring orbits, Proposition 1.1 of \cite{BBY} implies that, after this gluing, we obtain a new hyperbolic plug $R, \phi_R$.  

Finally, we glue $R, \phi_R$ to itself by gluing $\partial_{out}R =\partial_{out}\bar P $ to $\partial_{in}R =\partial_{in}Q$ again using the original associated maps $h_{S,S'}$. Since $\phi$ was assumed to be transitive,
$\phi|_Q$ does not contain any attractor nor repeller (and nor does $\phi|_P$ or $\bar \phi$, by construction). By construction, $\phi_R$ is isotopically equivalent to $\phi_Q$ on $Q$ and to $\bar \psi$ on $\bar{P}$, thus $\phi_R$ has no attractor or repeller basic sets.
So all we have to do to satisfy the hypotheses of Theorem 1.5 of \cite{BBY} is to show that the unstable laminations of $\phi_R$ on $\partial_{out} R$ are filling.
To this end, let $A$ be a connected component of the complement of the unstable lamination in a surface of $\partial_{out} R = \partial_{out}\bar P$. Then the backward orbit of any point through $A$ exit in some component of $\partial_{in}R =\partial_{in}Q$, so in particular must have crossed a cutting surface, call it $Z$, between $Q$ and $\bar P$. Since all of these surfaces have been glued in such a way that they are scalloped, the image of $A$ on $Z$ under the backwards flow is a rectangle bounded by two segments of leaves of the stable lamination in $Z$ and two segments of leaves of the unstable lamination. In particular, $A$ must be a \emph{strip} (i.e., bounded by two unstable leaves that accumulate onto two compact leaves, see \cite[Definition 3.11]{BBY}).  Since all the complements of the unstable laminations in $\partial_{out} R$ are strips, the unstable lamination is by definition filling in $\partial_{out} R$, and similarly for the stable lamination in $\partial_{in} R$. 
 
Thus \cite[Theorem 1.5]{BBY} applies and we obtain the manifold $M$ with an Anosov flow $\phi^+_M$ which is isotopically equivalent to $\phi$ on $Q$ and to $\phi_P$ on $P$.    The same construction applies with $\phi^-$ used in place of $\phi^+$, giving a flow $\phi^-_M$ on $M$.  
\end{proof}

\begin{rem} \label{rem:not_unique} 
A technicality of the gluing theorem of \cite{BBY} is that given a filling saddle hyperbolic plug $(V,X)$ and an appropriate\footnote{Appropriate here means \emph{strongly transverse}, i.e., the stable lamination on $\partial_{in} V$ together with the image by $h$ of the unstable lamination on $\partial_{out} V$ must extend to a pair of transverse foliations, see \cite{BBY} for more details.} gluing map $h\colon \partial_{out} V \to \partial_{in} V$, it is first necessary to isotope the flow $X$ and isotope the map $h$ in order to ensure that the resulting glued flow is Anosov (the precise notion used is called a \emph{strong isotopy of the triple} $(V,X,h)$ in \cite{BBY}).

Left open in \cite{BBY} is whether two Anosov flows obtained from different, but strongly isotopic, plugs are necessarily isotopically equivalent (see \cite[Question 1.7]{BBY}). This was answered positively in \cite{BeguinYu} for orientable manifolds when the resulting Anosov flows are transitive.

In our context, the necessity of that isotopy in \cite{BBY} means that, a priori, 
\begin{itemize}
\item there could be lots of different flows obtained as a periodic Seifert flip of an Anosov flow $\phi$, 
\item if $\psi$ is obtained from $\phi$ by a periodic Seifert flip on $P$ via Proposition \ref{prop_existence_of_flip}, and $\phi'$ is obtained from $\psi$ via a periodic Seifert flip on $P$, then $\phi'$ and $\phi$ could a priori be non isotopically equivalent, and 
\item in the construction of Proposition \ref{prop_existence_of_flip}, if we glue the model flow of $\phi|_P$ (without flipping) back to $\phi|_Q$, we might obtain a flow $\phi'$ that is inequivalent to the original $\phi$.
\end{itemize}
One might be able to adapt the arguments of \cite{BeguinYu} to our context, and show that all these constructions yield isotopically equivalent flows. Instead, we will {\em prove} here that these flows are isotopically equivalent by using the main theorem of \cite{BFM} and the result of the next section.
\end{rem}
	

\section{Free homotopy data of periodic Seifert flips} \label{sec:per_orbits} 

In this section we show that performing a periodic Seifert flip on a flow does not change the set of free homotopy classes represented by periodic orbits.  We use this to show that transitive flows have a unique (up to orbit equivalence) flip for each scalloped periodic Seifert piece, completing the proof of Theorem \ref{thm:flip_unique}.  We also prove Theorem \ref{thm_classification}, the classification theorem.   

In addition to the work in the previous sections, these proofs use as input the results of \cite{BFM}, and in particular the notion of the {\em sign data } of a tree of scalloped regions, applicable to any pair of transitive pseudo-Anosov flows with common sets of free homotopy classes of periodic orbits. We review these notions in Section \ref{sec:sign}.

\subsection{Flips preserve free homotopy data}\label{sec:uniqueness_flip}

\begin{proposition}\label{prop_same_spectra}
Let $\phi$ be an Anosov flow with a scalloped periodic piece $P$.  Let $\psi$ be a periodic Seifert flip of $\phi$ on $P$ in the sense of Definition \ref{def:flip}. 
 Then $\fix(\phi)=\fix(\psi)$.  
\end{proposition}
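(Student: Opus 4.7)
By the symmetry of the flip operation (since $\phi$ is also obtained from $\psi$ by flipping on $P$), it suffices to show $\fix(\phi)\subseteq \fix(\psi)$. I would fix a periodic orbit $\gamma$ of $\phi$ representing a conjugacy class $[g]\subset\pi_1(M)$ and a good representative of $P$ for both flows; the goal is to produce a $g$-invariant $\wt \psi$-orbit in $\wt M$, which will project to a periodic orbit of $\psi$ in the class $[g]$. The argument divides according to how $\gamma$ meets $P$.

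The two easy cases are when $\gamma$ does not cross $\partial P$. If $\gamma$ is contained in the spine of $P$ (a boundary curve of one of the elementary Birkhoff annuli making up the spine), then by the very definition of flip $\gamma$ is also a periodic orbit of $\psi$, with reversed orientation; since $\fix$ is defined using unoriented free homotopy classes, $[\gamma]\in\fix(\psi)$. Otherwise, Lemma \ref{lem:orbits_in_P} implies $\gamma$ is contained in $M\setminus P$, where $\phi$ and $\psi$ are isotopic; the corresponding $\psi$-orbit lies in the same free homotopy class.

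The main case is when $\gamma$ crosses $\partial P$. Lifting $\gamma$ to a $g$-invariant orbit $\wt\gamma$ of $\wt\phi$, the intersections of $\wt\gamma$ with the preimage of $\partial P$ break it into a $g$-periodic sequence of arcs, alternating between arcs $a_i$ lying in lifts $\wt P_i$ of $P$ and arcs $b_i$ lying in lifts $\wt Q_i$ of $M\setminus P$; each $a_i$ connects a specific pair of lifts $\wt T_i^{-}, \wt T_i^{+}$ of boundary components of $P$. By Remark \ref{rem_orbits_between_boundary}, one obtains a $\wt\psi$-orbit arc $a_i'$ in $\wt P_i$ connecting the same pair $\wt T_i^{-}, \wt T_i^{+}$, freely homotopic to $a_i$ rel boundary; and the isotopy between $\phi|_{M\setminus P}$ and $\psi|_{M\setminus P}$ similarly yields $\wt\psi$-arcs $b_i'$ corresponding to the $b_i$. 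Making these choices $g$-equivariantly (first on a fundamental domain of the action on the index set, then translating by powers of $g$) produces a $g$-invariant itinerary---the sequence of lifts $(\wt P_i, \wt Q_i)$ and of boundary lifts $\wt T_i^{\pm}$ crossed---that is realized by both flows.

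The main obstacle is to upgrade this combinatorial correspondence to a genuine $g$-invariant $\wt\psi$-orbit, since a priori the arcs $a_i'$ and $b_i'$ need not concatenate exactly at their endpoints. My plan here is a compactness/limit argument in $\orb_\psi$: for each finite truncation of the $g$-invariant itinerary the set of $\wt\psi$-orbit segments realizing that truncation is non-empty (by iterated application of Remark \ref{rem_orbits_between_boundary} at each boundary crossing), and since $\orb_\psi\cong\R^2$ is second countable, a diagonal extraction produces a bi-infinite $\wt\psi$-orbit realizing the whole itinerary. The element $g$ permutes the set of such realizing orbits; combined with the $g$-invariance of the itinerary and a planar fixed-point argument (for instance, applying Brouwer's plane translation theorem to the $g$-action on the stable and unstable leaves of $\psi$ constraining the candidate orbits, and using that $g$ cannot act freely on a nonempty bounded strip of $\orb_\psi$), one extracts a $g$-invariant orbit in this set, giving the required periodic orbit of $\psi$ in class $[g]$.
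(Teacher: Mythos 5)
The first two cases of your proposal (orbits in the spine, orbits in $M\smallsetminus P$) match the paper and are fine. Your main case starts out on the right track — lift $\gamma$, record the $g$-periodic itinerary of lifts of scalloped boundary surfaces crossed, and use Remark~\ref{rem_orbits_between_boundary} plus the isotopy on $M\smallsetminus P$ to see that each elementary step of the itinerary is also realized by $\wt\psi$. But from there you take a much harder road than is needed, and the hard part is exactly where your argument is vague.

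The gap is that you try to \emph{realize the entire $g$-invariant itinerary} by a single $\wt\psi$-orbit. Your compactness/diagonal argument is not a proof: the set of orbit segments realizing a finite truncation of the itinerary is not obviously nonempty in a uniformly controlled way (matching endpoints do not concatenate), and it is not clear the collection is precompact in any useful topology on the orbit space, so a diagonal extraction is not guaranteed to produce a limit orbit. Even granting a realizing orbit, extracting a $g$-invariant one requires a genuine argument; invoking Brouwer's plane translation theorem in one clause does not give it, and in fact $g$ may be orientation-reversing on $\orb_\psi$, where that theorem does not apply as stated. In short, both the existence of a realizer and its upgrade to a $g$-invariant orbit are asserted rather than proved.

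The paper avoids this entirely by never trying to build the full orbit. It introduces a relation $Z_i <_\phi Z_j$ on lifts of scalloped cutting surfaces (there is an oriented $\wt\phi$-orbit from $Z_i$ to $Z_j$) and proves in Lemma~\ref{lem:order_crossing_surfaces}, via the product structure of scalloped regions and their possible overlaps, that $<_\phi$ is \emph{transitive}. Then, from the stepwise data you already have — $Z_i <_\phi Z_{i+1}$ for each consecutive pair — one gets $Z_1 <_\phi g(Z_1)$ without ever concatenating anything. That forces $U_1\cap gU_1\neq\emptyset$ in $\orb_\phi$, where $U_1$ is the scalloped region corresponding to $Z_1$. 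The product structure of the scalloped region then shows $gU_1$ sits in the stable (or unstable) saturation of a lozenge of $U_1$, so $g$ strictly contracts the corresponding interval of leaves and fixes one; the symmetric argument with $g^{-1}$ fixes a leaf of the other foliation, and the intersection is the desired fixed point of $g$ in $\orb_\phi$. This sidesteps all the shadowing and fixed-point machinery you were reaching for. I would recommend reworking your main case around transitivity of $<_\phi$ and the contraction-on-intervals argument rather than trying to produce an explicit $g$-invariant orbit.
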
 

To prove this, we begin with an elementary lemma about orbits that cross scalloped surfaces.  

\begin{lemma} \label{lem:order_crossing_surfaces}
Let $\phi$ be any Anosov flow on a manifold $M$, and let $\mathcal{Z} = \{Z_i\}$ denote the set of all lifts to $\wt{M}$ of all scalloped cutting surfaces for $\phi$ in $M$. For distinct $Z_i,Z_j$, write $Z_i <_\phi Z_j$ if there is an (oriented) orbit of $\wt \phi$ from $Z_i$ to $Z_j$.  
Then the relation $<_\phi$ is transitive
\end{lemma}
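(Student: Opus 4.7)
The plan is to recast transitivity of $<_\phi$ as a non-emptiness statement in the orbit space $\orb_\phi$ and analyze it via the lozenge structure of scalloped regions. For each $Z\in\mathcal{Z}$, write $U_Z:=\pi(Z)\subset \orb_\phi$ for the associated scalloped region (Proposition~\ref{prop:scalloped_projection}); since $Z$ is a plane in $\wt M$ transverse to $\wt\phi$, each orbit meets $Z$ at most once, so $\pi|_Z\colon Z\to U_Z$ is a homeomorphism.

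I would first show that $<_\phi$ is intrinsic to the orbit-space data in the following sense: if \emph{some} orbit crosses $Z_i$ before $Z_j$, then every orbit whose projection lies in $U_{Z_i}\cap U_{Z_j}$ does so. By Lemma~\ref{lem:intersection_scalloped}, $U_{Z_i}\cap U_{Z_j}$ equals the intersection of a lozenge of $U_{Z_i}$ with a lozenge of $U_{Z_j}$, and this intersection is connected because each lozenge has a product structure in the coordinates induced by the transverse foliations $\cF^s$ and $\cF^u$. For any continuous parametrization of orbits in a neighborhood, the signed time difference between crossings of $Z_i$ and $Z_j$ is then a continuous nowhere-zero function on the connected set $U_{Z_i}\cap U_{Z_j}$ and so has constant sign.

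Given this, to prove $Z_i<_\phi Z_j$ and $Z_j<_\phi Z_k$ imply $Z_i<_\phi Z_k$, it will suffice to exhibit a point $\theta\in U_{Z_i}\cap U_{Z_j}\cap U_{Z_k}$: the orbit $\pi^{-1}(\theta)$ crosses all three planes, and by the previous paragraph (applied to the pairs $(Z_i,Z_j)$ and $(Z_j,Z_k)$) crosses them in the order $Z_i\to Z_j\to Z_k$, in particular witnessing $Z_i<_\phi Z_k$. Transitivity therefore reduces to showing $U_{Z_i}\cap U_{Z_j}\cap U_{Z_k}\neq\emptyset$, and this is the main obstacle I anticipate.

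To attack it, I would fix witness orbits $\gamma_1$ from $Z_i$ to $Z_j$ and $\gamma_2$ from $Z_j$ to $Z_k$. Their projections $\pi(\gamma_1)$ and $\pi(\gamma_2)$ lie, respectively, in the nonempty open subsets $A:=U_{Z_i}\cap U_{Z_j}$ and $B:=U_{Z_j}\cap U_{Z_k}$ of the connected scalloped region $U_{Z_j}$, and the triple intersection is exactly $A\cap B$. If $A\cap B\neq\emptyset$ we are done; otherwise, by Lemma~\ref{lem:intersection_scalloped}, $A\subset L_{ji}$ and $B\subset L_{jk}$ for unique lozenges $L_{ji},L_{jk}$ of $U_{Z_j}$. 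The orderings $Z_i<_\phi Z_j$ and $Z_j<_\phi Z_k$ each constrain where $A$ and $B$ sit inside these lozenges (relative to the flow direction across $Z_j$). The delicate part, and the step I expect to require the most care, is to combine these constraints with the staircase arrangement of the lozenges in $U_{Z_j}$ and the product structure coming from $\cF^s,\cF^u$ in order to force $A$ and $B$ to intersect, or alternatively to produce an orbit in $U_{Z_i}\cap U_{Z_k}$ directly by continuously varying orbits along a path in $U_{Z_j}$ connecting $\pi(\gamma_1)$ to $\pi(\gamma_2)$.
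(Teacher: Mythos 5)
Your proposal correctly translates the problem to the orbit space, shows (via the continuous, nowhere-zero time-difference function) that the crossing order is the same for all orbits through a common pair $U_{Z_i}\cap U_{Z_j}$, and correctly reduces transitivity to producing a point in $U_{Z_i}\cap U_{Z_j}\cap U_{Z_k}$. Up to that point you are following essentially the same outline as the paper. However, you then explicitly defer the crucial step (``the delicate part, and the step I expect to require the most care, is \dots to force $A$ and $B$ to intersect'') without carrying it out, so the argument is incomplete precisely where the real content of the lemma lies. That non-emptiness is not a routine point and cannot be waved at: two open subsets of $U_{Z_j}$ can certainly be disjoint, and one needs a structural input tying $A$ and $B$ together.

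The input the paper supplies, and that your write-up lacks, is the stable/unstable manifold convergence fact. If $\gamma$ is an orbit from $Z_i$ to $Z_j$, then positive rays of orbits through $\cF^s(\gamma)\cap Z_i$ converge to a positive ray of $\gamma$ and must therefore cross $Z_j$; translated to the orbit space this gives $\cF^s(\gamma)\cap U_{Z_i}\subset \cF^s(\gamma)\cap U_{Z_j}$, and a symmetric backward-time argument gives $\cF^u(\gamma)\cap U_{Z_j}\subset \cF^u(\gamma)\cap U_{Z_i}$. The paper packages the stable half of this as the claim that $Z_i<_\phi Z_j$ corresponds to $U_{Z_j}$ lying in the $\cF^s$-saturation of a distinguished lozenge of $U_{Z_i}$, and then transitivity comes from the nesting of these saturations. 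In your framework the same fact finishes the argument directly via the product structure of $U_{Z_j}$: with $p=\pi(\gamma_1)\in A$ and $q=\pi(\gamma_2)\in B$, the point $r=\cF^u(p)\cap\cF^s(q)$ lies in $U_{Z_j}$ (by triviality of the bifoliation there); since $r\in \cF^u(\gamma_1)\cap U_{Z_j}\subset U_{Z_i}$ and $r\in\cF^s(\gamma_2)\cap U_{Z_j}\subset U_{Z_k}$, we get $r\in U_{Z_i}\cap U_{Z_j}\cap U_{Z_k}$, which is exactly the point $\theta$ you needed. You gesture at both ideas (product structure, varying orbits along a path in $U_{Z_j}$) but never invoke the hyperbolic convergence fact that makes them effective; without it the claim that $A\cap B\neq\emptyset$ does not follow from anything you have established.
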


\begin{proof}
The proof proceeds by translating this picture to the orbit space of $\phi$.  Each $Z_i$ corresponds to a scalloped region $U_i$ in $\orb_\phi$.  An orbit passes through $Z_i$ and $Z_j$ if and only if $U_i \cap U_j \neq \emptyset$; the orbit is represented by a point $x$ in the intersection. 
The structure of scalloped regions, specifically, that they are product-foliated regions bounded by families of nonseparated leaves, constrains their possible overlaps.  Specifically, if for two scalloped regions we have $U_i \cap U_j \neq \emptyset$, then there is a lozenge $L$ in $U_i$ such that 
$L \supset U_i \cap U_j$ and $U_j$ intersects the two sides of $L$ on the boundary of $U_i$.   In other words, either 
\begin{enumerate} 
\item $U_j$ is contained in the stable saturation of a lozenge of $U_i$, whose unstable sides are in $\partial U_i$, or 
\item $U_j$ is contained in the unstable saturation of a lozenge of $U_i$, whose stable sides are in $\partial U_i$. 
\end{enumerate}

We claim that the orbit is oriented from $Z_i$ to $Z_j$ in the first case, and has the opposite orientation in the second. 
To see this, suppose $\gamma$ is an orbit oriented from $Z_i$ to $Z_j$.   Then, there is some small $\varepsilon$-neighborhood of $\gamma$ in $\cF^s(\gamma)$ so that orbits in this neighborhood travel alongside $\gamma$ from $Z_i$ to $Z_j$.  Any positive ray in $\cF^s(\gamma)$ must eventually intersect the $\varepsilon$-neighborhood of any positive ray of $\gamma$. Therefore, we deduce that any point in $\cF^s(\gamma)\cap Z_i$ will have a forward orbit in $\cF^s(\gamma)\cap Z_j$.  Translating this statement into the orbit space, this means that $\cF^s(\gamma) \cap U_i \subset \cF^s(\gamma) \cap U_j$. Thus, $U_j$ is contained in the stable saturation of a lozenge $L$ of $U_i$, where the unstable sides of $L$ are contained in $\partial U_i$ (and intersect $U_j$). 
 
Having established the claim, the desired conclusion follows immediately: if $Z_i <_\phi Z_j$ and $Z_j <_\phi Z_k$, this forces $U_k$ to be contained in the stable saturation of a lozenge from $U_i$ whose unstable boundary leaves are contained in $\partial U_i$, hence $Z_i < Z_k$.  
\end{proof}

\begin{proof}[Proof of Proposition \ref{prop_same_spectra}]
Suppose $\psi$ is obtained from $\phi$ by a periodic Seifert flip on $P$.  Since (after some isotopy equivalence) $\psi$ and $\phi$ agree on $M \setminus P$, there is a bijective correspondence between unoriented homotopy classes of their periodic orbits contained in $M \setminus P$.  
Further, the only periodic orbits of either flow that stay in $P$ are those freely homotopic to the Seifert fiber, so we also have a correspondence for these. Hence to prove that $\fix(\phi)=\fix(\psi)$, we only have to show a correspondence between the orbits crossing $P$. 

Let $\mathcal{Z}$ denote the set of all lifts to $\wt{M}$ of all boundary cutting surfaces of $P$.  By assumption, these are all scalloped for both $\phi$ and $\psi$.  
Suppose $g \in \pi_1(M)$ is represented by a periodic orbit $\alpha$ of $\psi$ that crosses $P$.  Then there is a lift $\wt{\alpha}$ to an orbit of $\wt{\psi}$ in $\wt{M}$ such that $g$ translates along $\wt{\alpha}$.  Let $Z_1, Z_2, \ldots Z_n \in \mathcal{Z}$ be lifts of scalloped surfaces crossed, in order, by the oriented orbit $\wt{\alpha}$ such that $Z_n = g(Z_1)$.  We choose these sequentially so that $\wt{\alpha}$ does not cross any element of $\mathcal{Z}$ between $Z_i$ and $Z_{i+1}$, so we have $Z_1 <_\psi Z_2 <_\psi \ldots <_\psi Z_n$. 

Now we consider the flow $\phi$, and the associated order $<_\phi$ on $\mathcal{Z}$ defined in Lemma \ref{lem:order_crossing_surfaces}.  Consider any consecutive pair $Z_i, Z_{i+1}$, $1 \leq i \leq n-1$.  The projection of the segment of $\wt{\alpha}$ between $Z_i$ and $Z_{i+1}$ either lies in $P$ or in $M \smallsetminus P$.  If it lies in $M\smallsetminus P$, then (since $\phi$ and $\psi$ are isotopic on $M\smallsetminus P$) it corresponds to a segment of an oriented orbit (with the same orientation) of $\phi$ from $Z_i$ to $Z_{i+1}$, thus we have $Z_i <_\phi Z_{i+1}$.  If it lies in $P$, then since $\phi$ and $\psi$ are flips of each other on $P$, there exists an oriented orbit of $\phi$ that goes from $Z_i$ to $Z_{i+1}$ (see Remark \ref{rem_orbits_between_boundary}), 
and so again $Z_i <_\phi Z_{i+1}$.  By Lemma \ref{lem:order_crossing_surfaces}, we have $Z_1 <_\phi  \ldots <_\phi Z_n = g(Z_1)$.  Thus, letting $U_1$ denote the associated scalloped region in $\orb_\phi$, we have $U_1 \cap g U_1 \neq \emptyset$.  
As we noted in the proof of Lemma \ref{lem:order_crossing_surfaces}, this means that $g U_1$ is contained in the (stable or unstable) saturation of a lozenge of $U_1$, hence $g$ contracts the interval of (unstable or stable) leaves that pass through $U_1$, giving a leaf fixed by $g$ in $U_1$.  Similarly, we have $g^{-1} U_1 \cap U_1 \neq \emptyset$ and hence a leaf of the other foliation is also fixed, thus a fixed point for $g$ in $\orb_\phi$, representing a periodic orbit in the free homotopy class of $g^{\pm 1}$. 
Thus, $\mathcal{P}(\psi) \subset \mathcal{P}(\phi)$ and by a symmetric argument we have equality.   
 \end{proof}


\subsection{Sign data for trees of scalloped regions}  \label{sec:sign}

The definition of sign is somewhat difficult to parse within the context of \cite{BFM} since it is stated in the process of defining a map between orbit spaces of two flows.  But the essence of the idea is simple: when two transitive pseudo-Anosov flows have the same free homotopy data, there is a natural, well defined map between dense subsets of their respective orbit spaces that sends lozenges to lozenges and $g$-invariant trees of scalloped regions to $g$-invariant trees of scalloped regions, but on any particular tree of scalloped regions, it may preserve or reverse the dynamics of $g$.   This binary information (preserve versus reverse) is the notion of same or opposite sign.    

Our next goal is to prove that periodic Seifert flips change the sign on the tree of scalloped region associated to the Seifert piece. To do that, we need to explain the definition of sign a bit more formally.  

\subsection*{Signs -- set-up.} Suppose $\phi_1$, $\phi_2$ are transitive pseudo-Anosov flows on a manifold $M$ with $\cP(\phi_1) = \cP(\phi_2)$.\footnote{By \cite[Proposition 2.38]{BFM}, assuming only one is transitive, the fact that $\cP(\phi_1) = \cP(\phi_2)$ implies the other is as well.}   
We assume that at least one flow has a tree of scalloped regions in its orbit space.  In particular, the flow is not $\bR$-covered, which puts us in the main setting of \cite{BFM}.

Let $\orb_i$ denote the orbit space of $\phi_i$ and let $\orb^{NC}_i$ denote the set of points in $\orb_i$ which are fixed by some nontrivial element of $\pi_1(M)$ and {\em not} the corner of any lozenge.  By \cite[Lemma 2.30]{BFM}, $\orb^{NC}_i$ is dense in $\orb_i$. 
Let 
\[ \cP^{NC}(\phi_i): = \{ g \in \pi_1(M) \mid \exists x \in \orb^{NC}_i \text{ with } gx =x \}. \]
By \cite[Proposition 4.14]{BFM}, 
we have $\cP^{NC}(\phi_i) = \cP^{NC}(\phi_2)$.   By Proposition \ref{prop:fixed_chain}, each $x$ in $\orb^{NC}_i$ is the unique fixed point of some nontrivial $g \in \pi_1(M)$. Thus, there is a well-defined bijection $H\colon \orb^{NC}_1 \to \orb^{NC}_2$ sending the unique fixed point of $g$ on $\orb_1$ to its unique fixed point on $\orb_2$.    By construction, this map is $\pi_1(M)$-equivariant, satisfying $H(gx) = gH(x)$.  

Lemma 5.25 of \cite{BFM} show that $H$ sends points inside a $g$-invariant lozenge to points in some $g$-invariant lozenge in $\orb_2$ and preserves the property of being in a common $g$-invariant lozenge.  Thus, for a lozenge $L$, we may speak unambiguously of its image $H(L)$, meaning the lozenge in $\orb_2$ whose intersection with $\orb^{NC}_2$ is $H(L \cap \orb^{NC}_1)$.   $H$ also preserves the property of pairs of points being in adjacent lozenges sharing a side. It is further shown that
$H$ extends continuously to a well-defined map on all intersections of leaves of non-corner points, with the same equivariance, and lozenge-preserving properties listed above, as well as preserving the property of points being on a common leaf of a foliation.  Up to reversing the direction of one flow, this extended map (which we also denote by $H$) sends leaves of $\cF^u(\phi_1)$ to those of $\cF^u(\phi_2)$ and leaves of $\cF^s(\phi_1)$ to those of $\cF^s(\phi_2)$, wherever it is defined.  

The main result of \cite{BFM} shows that, in the absence of any trees of scalloped regions, this map $H$ in fact extends to a homeomorphism $\orb_1 \to \orb_2$ that conjugates the actions of $\pi_1(M)$ on the two orbit spaces.  Moreover, in the case where there do exist trees of scalloped regions, only one type of discontinuous behavior may occur, which we explain now.

\subsection*{Signs as markers of discontinuity} 
Suppose that $L$ and $L'$ are adjacent lozenges in $\orb_1$, with corners fixed by $g$.  For concreteness, suppose the shared side of $L$ and $L'$ is in $\cF^u$ and the action of $g$ on this leaf is expanding (the other cases are completely analogous).  
Then $H(L)$ and $H(L')$ are adjacent lozenges in $\orb_2$, with corners fixed by $g$.  If $H$ {\em did} extend to a $\pi_1$-equivariant homeomorphism, then the shared side of $H(L)$ and $H(L')$ would be a leaf on which $g$ was expanding.  This is in fact typically the case, the only possible exceptions being if $L$ and $L'$ lie in a tree of scalloped regions.  

If $T$ is a tree of scalloped regions in $\orb_1$ with corners fixed by $g$, then (since $H$ preserves lozenges and adjacency of lozenges), $H(T)$ is a tree of scalloped regions in $\orb_2$ with corners fixed by $g$ as well.   If, for one pair of adjacent lozenges $L$ and $L'$ in $T$ as above, their images $H(L)$ and $H(L')$ have a shared side on which $g$ has the same dynamics as shared side of $L$ and $L'$, then \cite[Lemma 5.37]{BFM} implies that this also holds for {\em every} pair of adjacent lozenges.  Thus, the map induced by $H$ on lozenges in $T$ either globally respects or reverses the dynamics of $g$ on shared sides.  
In the first case, $T$ and $H(T)$ are said to have the {\em same sign}, and otherwise that they have {\em opposite sign}.

With this background, we can prove the following.  
\begin{proposition}[Flips change sign] \label{prop:flips_change}
Suppose that $\phi_2$ is a transitive Anosov flow obtained from $\phi_1$ by a periodic Seifert flip on scalloped piece $P$, and let $T$ be the tree of scalloped regions associated to $P$ for $\phi_1$.  Then $H(T)$ is the tree of scalloped regions associated to $P$ for $\phi_2$, and $\phi_1$ and $\phi_2$ have opposite signs on $T$ and $H(T)$.  
\end{proposition}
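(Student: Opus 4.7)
The plan is to carry out two independent steps: first, to identify $H(T)$ as the tree of scalloped regions associated to $P$ for $\phi_2$; and second, to compare the dynamics of a suitable element of $\pi_1(M)$ on a single pair of adjacent lozenges in $T$ versus its image in $H(T)$.

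First I would let $g \in \pi_1(M)$ be an element representing the regular fiber of $P$. By Theorem~\ref{thm:tree_iff_periodic} together with Lemma~\ref{lem:tree_fix_corners} and Remark~\ref{rem_maximal_chain}, the tree $T$ is precisely the unique maximal chain of lozenges in $\orb_1$ whose corners are all fixed by $g$. By Definition~\ref{def:flip}, $P$ is also a scalloped periodic Seifert piece for $\phi_2$ with the same fiber $g$, so the same description produces a unique tree $T_2 \subset \orb_2$, namely the maximal chain of $g$-invariant lozenges in $\orb_2$. Since $H$ is $\pi_1(M)$-equivariant and induces an adjacency-preserving bijection on lozenges, it restricts to a bijection between $g$-invariant lozenges in $\orb_1$ and $g$-invariant lozenges in $\orb_2$, and by maximality this forces $H(T) = T_2$.

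Next, to compare signs I would pick two adjacent lozenges $L, L' \subset T$ whose shared side $\ell$ is an unstable leaf in $\orb_1$, and let $p$ be the corner of $L$ lying on $\ell$. Then $p$ is the projection to $\orb_1$ of some lift $\wt{\alpha} \subset \wt{M}$ of a periodic orbit $\alpha$ of $\phi_1$ in the spine of $P$. The element $g$ acts on $\wt{\alpha}$ by translation, and this translation is in the $\wt{\phi}_1$-forward direction if and only if $g$ acts expansively on $\ell$ near $p$. Applying $H$, the pair $L, L'$ maps to adjacent lozenges $H(L), H(L')$ sharing an unstable side $H(\ell)$ through $H(p)$; here I use the normalization from \cite{BFM} in which $H$ sends unstable leaves to unstable leaves, which is the right convention because the model construction of Sections~\ref{sec_model_flow}--\ref{sec:build_flips} shows that $\phi_1$ and $\phi_2$ have the same stable and unstable foliations on $M$.

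The dynamical core of the proof is the observation that reversing the orientation of a periodic orbit reverses the dynamical sign of $g$ on its invariant manifolds. Concretely, the lift $\wt{\alpha}$ is the same subset of $\wt{M}$ for both flows, but by Definition~\ref{def:flip} its $\wt{\phi}_2$-parametrization runs opposite to its $\wt{\phi}_1$-parametrization. Since $g$ is a deck transformation, its action on $\wt{M}$ and on $\wt{\alpha}$ is independent of the flow; yet the translation $g\colon \wt{\alpha}\to\wt{\alpha}$ that is $\wt{\phi}_1$-forward is simultaneously $\wt{\phi}_2$-backward. Hence if $g$ acts expansively on $\ell$ near $p$ for $\phi_1$, it acts contractively on $H(\ell)$ near $H(p)$ for $\phi_2$. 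This is the opposite sign condition for $T$ and $H(T)$, and a single pair of adjacent lozenges suffices to compare signs on the whole tree by \cite[Lemma 5.37]{BFM}.

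The step I expect to be most delicate is justifying that $H(p)$ corresponds to the same lift $\wt{\alpha}$ used to define $p$, rather than to some other $g$-invariant lift (and correspondingly that $H(\ell)$ is the leaf through $H(p)$ associated to the $\wt{\phi}_2$-unstable manifold of $\wt{\alpha}$). This should follow from the $\pi_1(M)$-equivariance of $H$ together with its compatibility with adjacency of lozenges, which pins down $H(p)$ as the corner of $H(L)$ obtained by extending $H$ to corners via intersections of leaves as described in Section~\ref{sec:sign}; but making this bookkeeping entirely rigorous will require careful reference to the construction of $H$ in \cite{BFM}.
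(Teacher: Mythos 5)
Your overall strategy matches the paper's: identify $H(T)$ as the maximal $g$-invariant chain in $\orb_2$ by equivariance and maximality (this part is correct), then pick a pair of adjacent lozenges and compare the dynamics of $g$ on the shared side for the two flows. You have also correctly identified the dynamical core: reversing the orientation of the periodic orbit $\wt{\alpha}$ reverses whether the translation of $g$ along $\wt{\alpha}$ is flow-forward or flow-backward, hence flips expansion/contraction of $g$ on the leaves emanating from $\wt{\alpha}$.

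However, the step you flag as ``most delicate'' is a genuine gap, and your proposed resolution is exactly the reasoning the paper explicitly warns against. You write that ``the $\pi_1(M)$-equivariance of $H$ together with its compatibility with adjacency of lozenges \dots pins down $H(p)$ as the corner of $H(L)$.'' It does not. The paper's proof contains a footnote saying precisely that ``using only the property that $H$ preserves adjacency, one cannot rule out the possibility that $H(L)$ shares the side of $L$ containing $\alpha_1$'' (i.e., the opposite corner). Adjacency preservation tells you $H(L)$ and $H(L')$ share \emph{a} side, but not \emph{which} side. Without pinning this down, you cannot conclude which corner's stable (or unstable) leaf you are comparing, and the sign comparison is not determined. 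An additional subtlety is that $H$ is constructed on $\orb^{NC}$, the set of non-corner fixed points, and then extended via leaf intersections of non-corner points; it is not defined on corners $p$ directly, so ``$H(p)$'' needs justification. The paper sidesteps both issues at once by working with a non-corner fixed point $\gamma$ lying in the interior of $L$, choosing $L'$ to be an adjacent lozenge meeting $\cF^u(\gamma)$, and then invoking cocompactness of the $\pi_1(P)$-action on $\wt{P}$ to produce an element $k \in \pi_1(P)$ such that $L, L', k(L)$ lie in a line of lozenges. Since $H$ is $k$-equivariant and preserves lines, $H(L')$ must lie between $H(L)$ and $kH(L)$, which forces the shared side of $H(L), H(L')$ to be the one through the correct orbit $\alpha_2$. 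This line-of-lozenges argument is the missing ingredient you need.

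One smaller point: your assertion that ``the model construction \dots shows that $\phi_1$ and $\phi_2$ have the same stable and unstable foliations on $M$'' overstates what the construction gives. Observation~\ref{obs:properties_model_flip} shows that $\psi^+$ and $\psi^-$ on a model block $N$ have the same stable/unstable manifolds for the vertical periodic orbits, so the boundary laminations and local stable/unstable annuli of the spine orbits agree; but the full $2$-dimensional stable and unstable foliations of the resulting Anosov flows on $M$ are built by the BBY gluing and need not coincide. What you can and should use is the weaker fact, which suffices: $H$ (in the appropriate normalization) sends $\cF^u(\phi_1)$-leaves to $\cF^u(\phi_2)$-leaves and likewise for $\cF^s$, and the stable/unstable sides of the spine lozenges correspond in the two models.
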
 

\begin{proof} 
For the proof, we may assume that $P$ is in good position for both flows, the restrictions of $\phi_1$ and $\phi_2$ to $M \setminus P$ agree up to an isotopy, and using Proposition \ref{prop:nice_gluing}, that $\phi_1$ is represented on $P$ by a model flow $\phi_P^+$ and $\phi_2$ by its flip $\phi_P^-$.  
By proposition \ref{prop_same_spectra}, we have $\cP(\phi_1) = \cP(\phi_2)$.   As above, we let $\orb_i$ denote the orbit space of $\phi_i$. 

Let $g \in \pi_1(M)$ represent the fiber direction of $P$, and $T_i$ in $\orb_i$ the associated tree of scalloped regions with all corners fixed by $g$.   Note that $T_2 = H(T_1)$, since $T_2$ is the maximal chain of $g$-invariant lozenges in $\orb_2$. 

By the density of $\orb^{NC}_i$ explained above, we may find a lift $\gamma$ of a periodic orbit of $\phi_1$, fixed by $h \in \pi_1(M)$, so that the unique fixed point of $h$ lies in a lozenge $L$ of $T_1$.    Consider a lozenge $L'$ adjacent to $L$ in $\orb_1$ and intersecting $\cF^u(\gamma)$.   Let $\wt{N'}$ be the block in $\wt{P}$ which projects to $L'$.  Since the action of $\pi_1(P)$ on $\wt{P}$ is cocompact, we may find $k \in \pi_1(P)$ such that $L, L'$ and $k(L)$ lie in a line of lozenges, in that order.  In other words, there are distinct lozenges $L = L_0, L'= L_1, \ldots L_m = k(L)$ with $L_i$ sharing a side with $L_{i+1}$ and intersecting $\cF^u(\gamma)$.  
By $\pi_1$-equivariance of the map $H$ and the fact that it preserves lozenges, stable and unstable leaves, and adjacency, we have that $H(L)$ is the lozenge in $\orb_2$ containing the unique fixed point of $h$, that $H(kL) = k H(L)$, and that $H(L')$ is adjacent to $H(L)$ and between $L$ and $kH(L)$ on a line of lozenges all intersecting $\cF^u(H(\gamma))$.  

We now translate this picture back to $\wt{P} \subset \wt{M}$.  The lozenges $L$ and $L'$ (or any adjacent pair along the line) are the projections to $\orb_1$ of lifts of adjacent Birkhoff annuli in $\wt{P}$ contained in adjacent model blocks for $\phi_1$ in $P$, as in Figure \ref{fig:gluings}, where $\gamma$ passes through the lift of a block associated to $L$.   Let $\wt{N}$ (the universal cover of the block labeled $N_1$ in the figure) denote the block which projects to $L$ in $\orb_1$, and $\wt{N'}$ the adjacent block which projects to $L'$.  Continuing along adjacent blocks glued along the stable manifolds of their periodic orbits, one eventually arrives at $k(\wt{N})$, which is separated from $\wt{N}$ by $\wt{N'}$.   

Suppose for concreteness that the direction of the element $g$ representing the fiber is the positive $z$ direction in the block associated to $L$, i.e., agrees with the orientation of the periodic orbit $\alpha_2$ in that block.  (This is the situation shown in Figure \ref{fig:gluings}, as in the figure, we use $\alpha_2$ to denote the (lifted) orbit where the blocks corresponding to $L$ and $L'$ are glued). 
Then, the action of $g$ is \emph{expansive} on $\cF^s(\alpha_2)$ in $\orb_1$, since its direction agrees with that of $\alpha_2$. 

Now, we consider $\phi_2$.  By construction, its restriction to $\wt{P}$ is constructed out of blocks glued in the same manner, and with the same action of $\pi_1(P)$, but with (the lift of) $\psi^+$ on each block replaced with $\psi^-$.  Thus, the directions of the periodic orbits in each block are reversed, but stable and unstable manifolds are preserved.  In particular, the action of $g$ is contracting on $\cF^s(\alpha_2)$.  
Projecting this picture back down to $\orb_2$ will now give us the desired conclusion: First observe that $\wt{N}$ projects to the unique lozenge $H(L)$  in $\orb_2$ containing $H(\gamma)$.  Since $k(\wt{N})$ is separated from $\wt{N}$ by $\wt{N'}$ and $H$ is $\pi_1(M)$-equivariant and preserves adjacency and lines of lozenges, it means that $k H(L) = H(kL)$ is separated from $H(L)$ by $H(L')$.  Thus, $H(L)$ and $H(L')$ share the side containing the projection of $\alpha_2$ to $\orb_2$\footnote{Establishing this fact is the reason we introduced the element $k$, using only the property that $H$ preserves adjacency, one cannot rule out he possibility that $H(L)$ shares the side of $L$ containing $\alpha_1$.}.
As we noted above, the action of $g$ is contracting on this stable side.  This, by definition, says the signs of $T_1$ and $T_2$ disagree. 
\end{proof}

\subsection{Proof of uniqueness of flips, non-isotopy equivalence, and classification of flows} 
Using the work of the previous subsection, we can now easily deduce that periodic Seifert flips are unique (up to isotopy equivalence), that they always change the isotopy-equivalence class of a flow, and prove our main classification result.  
 
\begin{proof}[Proof of Theorem \ref{thm:flip_unique} (Uniqueness of flips)]
Suppose $\phi$ is a transitive Anosov flow and $\psi$ and $\psi'$ are two flows, each separately obtained from $\phi$ by doing a periodic Seifert flip on a periodic Seifert piece $P$. 
By Proposition \ref{prop_same_spectra},  $\fix(\phi)=\fix(\psi) = \fix(\psi')$.  By  Proposition \ref{prop:flips_change} $\psi$ and $\psi'$ have opposite signs as $\phi$ on the tree of scalloped regions corresponding to $P$, so their signs agree.  
If $\phi$ has any other periodic Seifert pieces, $\psi$ and $\psi'$ also have the same signs (agreeing with that of $\phi$) on those.  Thus, $\psi$ and $\psi'$ have the same signs on trees of scalloped regions and the same set of unoriented free homotopy classes represented by periodic orbits, so by \cite[Theorem 1.3]{BFM} they are isotopically equivalent.  
\end{proof}

\begin{proposition} \label{prop:not_isotopy_eq}
Suppose that $\phi_1$ is a transitive Anosov flow on $M$, and $\phi_2$ is obtained from $\phi_1$ by a periodic Seifert flip.  
Then $\phi_1$ and $\phi_2$ are not orbit equivalent by any map isotopic to the identity.  
\end{proposition}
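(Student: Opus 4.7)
The plan is to argue by contradiction using the sign data on trees of scalloped regions together with Proposition~\ref{prop:flips_change}. The key observation is that an orbit equivalence isotopic to the identity induces a genuine $\pi_1(M)$-equivariant homeomorphism of orbit spaces, and such a homeomorphism must preserve the sign on a tree of scalloped regions, since signs are detected by expansion versus contraction of concrete group elements on concrete leaves --- a topological invariant.

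Suppose for contradiction that $\Theta\colon M \to M$ is isotopic to the identity and sends orbits of $\phi_1$ to orbits of $\phi_2$. Lifting $\Theta$ to $\wt{\Theta}\colon \wt M \to \wt M$ equivariantly (using that $\Theta_\ast$ is the identity on $\pi_1(M)$) and passing to the quotient by orbits, one obtains a $\pi_1(M)$-equivariant homeomorphism $\bar\Theta\colon \orb_1 \to \orb_2$. After possibly swapping $\cF^s(\phi_2)$ with $\cF^u(\phi_2)$, we may assume $\bar\Theta$ sends stable leaves to stable leaves and unstable to unstable. In particular, $\bar\Theta$ sends lozenges to lozenges, preserves adjacency of lozenges, and sends trees of scalloped regions to trees of scalloped regions.

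Next I would identify $\bar\Theta$ with the map $H$ from Section~\ref{sec:sign}. If $x \in \orb_1^{NC}$ is the unique fixed point of some nontrivial $g \in \pi_1(M)$, then $\bar\Theta(x)$ is fixed by $g$ (by equivariance) and lies in $\orb_2^{NC}$ (since $\bar\Theta$ preserves the corner/non-corner distinction in the lozenge structure). Because $g$ has a unique fixed point on $\orb_2^{NC}$ and $H(x)$ is that fixed point by construction, we conclude $\bar\Theta(x) = H(x)$. Thus $\bar\Theta$ agrees with $H$ on the dense subset $\orb_1^{NC}$, and since both are continuous on the relevant objects (lozenges and their closures), the two maps agree on each tree of scalloped regions.

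Let $T$ be the tree of scalloped regions for $\phi_1$ associated to the flipped piece $P$, with corners fixed by the fiber element $g$, and let $L, L'$ be adjacent lozenges in $T$ sharing, say, an unstable side $l$ on which $g$ acts by expansion. Then $\bar\Theta(L)$ and $\bar\Theta(L')$ are adjacent lozenges in $H(T)$ sharing the unstable side $\bar\Theta(l)$, and since $\bar\Theta$ is a topological conjugacy, $g$ still acts by expansion on $\bar\Theta(l)$. By the definition recalled in Section~\ref{sec:sign}, this means $T$ and $H(T)$ carry the \emph{same} sign, contradicting Proposition~\ref{prop:flips_change}, which gives opposite signs. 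The main technical point to verify carefully is the identification of $\bar\Theta$ with $H$ at the corners of lozenges in $T$ (where $H$ was defined as a continuous extension rather than directly), but this follows from density of $\orb^{NC}$ together with the fact that both maps preserve the stable/unstable leaf structure.
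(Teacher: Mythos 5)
Your proposal is correct and follows essentially the same route as the paper's proof: both argue by contradiction, identify the orbit-space map induced by a hypothetical identity-isotopic orbit equivalence with the map $H$ on the dense set of unique fixed points, and then invoke Proposition~\ref{prop:flips_change} to derive a contradiction. The paper compresses the final step into the assertion that ``$H$ would have a continuous equivariant extension, contradicting the fact that flips have opposite signs,'' while you unpack this by explicitly noting that a $\pi_1(M)$-equivariant homeomorphism of orbit spaces must preserve the expansion/contraction behavior of $g$ on shared lozenge sides, hence must preserve the sign of the tree of scalloped regions; this is a more detailed but logically equivalent presentation of the same argument.
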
 

\begin{proof} 
Since $\mathcal{P}(\phi_1) = \mathcal{P}(\phi_2)$, we have a $\pi_1(M)$-equivariant map $H$ from a dense subset of $\orb_1$ to a dense subset of $\orb_2$ as described above, where if $x$ is the unique fixed point of $g \in \pi_1(M)$ in $\orb_1$, then $H(x)$ is the unique fixed point of $g$ in $\orb_2$.  
If there is a homeomorphism $h$, isotopic to identity, that sends orbits of $\phi_1$ to that of $\phi_2$, then an appropriate lift $\wt h$ of $h$ will send each unique fixed point of an element $g$ of $\pi_1(M)$ in $\orb_1$ to its unique fixed point in $\orb_2$.  In other words, it will coincide with $H$ on a dense set.  Since $\wt h$ is continuous and $\pi_1$ equivariant, this implies that $H$ would have a continuous equivariant extension, contradicting the fact that flips have opposite signs (Proposition \ref{prop:flips_change}).  
\end{proof}

\begin{proof}[Proof of Theorem \ref{thm_classification} (Classification of flows)]
Let $\phi_1$ and $\phi_2$ be Anosov flows, at least one of which is transitive, and assume $\fix(\phi_1)=\fix(\phi_2)$. By \cite[Proposition 2.38]{BFM}, this implies they are both transitive.
By Theorem 1.3 of \cite{BFM}, either $\phi_1$ and $\phi_2$ are isotopy equivalent or there is at least one scalloped Seifert piece $P$ in $M$ on which $\phi_1$ and $\phi_2$ have a different sign (here we abuse terminology somewhat and say $P$ instead of ``the tree of scalloped regions corresponding to $P$").  Let $\psi_2$ be the flow obtained from $\phi_2$ by a periodic Seifert flip on $P$. 
By Proposition \ref{prop_same_spectra}, $\fix(\psi_2)=\fix(\phi_2)$, and by Proposition \ref{prop:flips_change}, the sign of $\psi_2$ now agrees with that of $\phi_1$ on $P$.  By construction the sign has not changed on any other scalloped periodic piece.  
Thus, applying a flip to each of the scalloped pieces on which signs differ, we obtain a flow $\psi$, such that $\fix(\psi)=\fix(\phi_1)$ and all signs agree, so by \cite[Theorem 1.3]{BFM}, $\psi$ and $\phi_1$ are isotopically equivalent.  
\end{proof}

\section{Orbit and isotopy equivalence classes of flips} \label{sec:equiv_flips}
In this section, we discuss how periodic Seifert flips change the orbit equivalence class of a flow.  We showed in Proposition \ref{prop:not_isotopy_eq} that applying a periodic Seifert flip always results in a flow that is not isotopy equivalent to the original flow.    But, it is possible that a flip does not change the orbit equivalence class (as we show in Example \ref{ex:seif_flip_flow}) due to some extra symmetries.   Nevertheless, by ruling out these symmetries one can construct large families of examples of non orbit-equivalent flows with the same free homotopy data.  We do this now, which will prove Corollary \ref{cor_sharpupperbound}.  

\subsection{Building non-orbit equivalent examples}

Proposition \ref{prop:nice_gluing} says that periodic Seifert flows can be obtained by gluing model blocks.  For simplicity, we will only use the type of gluing in Construction \ref{const:regular_glue}, so only need to specify the combinatorial arrangement of the blocks to be glued together.  This is a construction already done in \cite[section 8]{BarbFen_pA_toroidal}, and we adopt their language and notation.  

\begin{definition}(see \cite{BarbFen_pA_toroidal})
An {\em admissible fatgraph} $X$  is a graph $X$ embedded in a surface $\Sigma$ that deformation retracts to $X$ that satisfies to the following properties:
\begin{enumerate} 
\item the valence of every vertex is even, and 
\item the set of boundary components of $\Sigma$ can be partitioned in two subsets (the outgoing and the incoming) so that for every edge $e$ of $X$, the two sides of $e$ lie in different subset of this partition.  
\item each loop in $X$ corresponding to a boundary component contains an even
number of edges.
\end{enumerate}
\end{definition} 
We will always take our examples to have all vertices of degree four.  

Given an admissible fatgraph $X$, one may associate a copy of a model flow $\psi^+$ on $N$  to each edge of the fat graph, and glue appropriate stable or unstable leaves of the vertical orbits according so that the outgoing (resp.~incoming) labels on the components of $\partial \Sigma$ correspond to the outgoing (resp.~incoming) annuli of $N$ as in Construction \ref{const:regular_glue}.  If all vertices have valence 4, the gluings around each vertex are exactly as in Construction \ref{const:regular_glue}; higher valence vertices give singular orbits of pseudo-Anosov flows.  

The result of the gluing is a compact Seifert manifold with boundary that is a circle bundle over $\Sigma$, with a flow that is incoming on some boundary tori and outgoing on others. Item (3) in the definition ensures that the boundary components are tori rather than Klein bottles, which makes it easier to work with.

We now build examples for the proof of Corollary \ref{cor_sharpupperbound}.  
We note that many examples of arbitrarily many non-orbit equivalent Anosov flows on the same manifolds have been built in the past (see \cite{BBY,BM,CP}), the point here is that we can build arbitrarily many that have the same free homotopy data.   Our procedure is quite general and can easily be adapted to give examples of pseudo-Anosov flows as well.  

We use as input families of admissible fatgraphs with the properties that they are pairwise non-homeomorphic surfaces, have at least two vertices, and each have one vertex that is preserved by all automorphisms.
A family of examples $X_n$ is described in Figure \ref{fig:special_graph} below, with the additional property that all vertices have valence 4, so that the associated flow has no singular orbits.  
\begin{figure} 
   \labellist 
    \small\hair 2pt
    \endlabellist
\centerline{ \mbox{
\includegraphics[width=7.5cm]{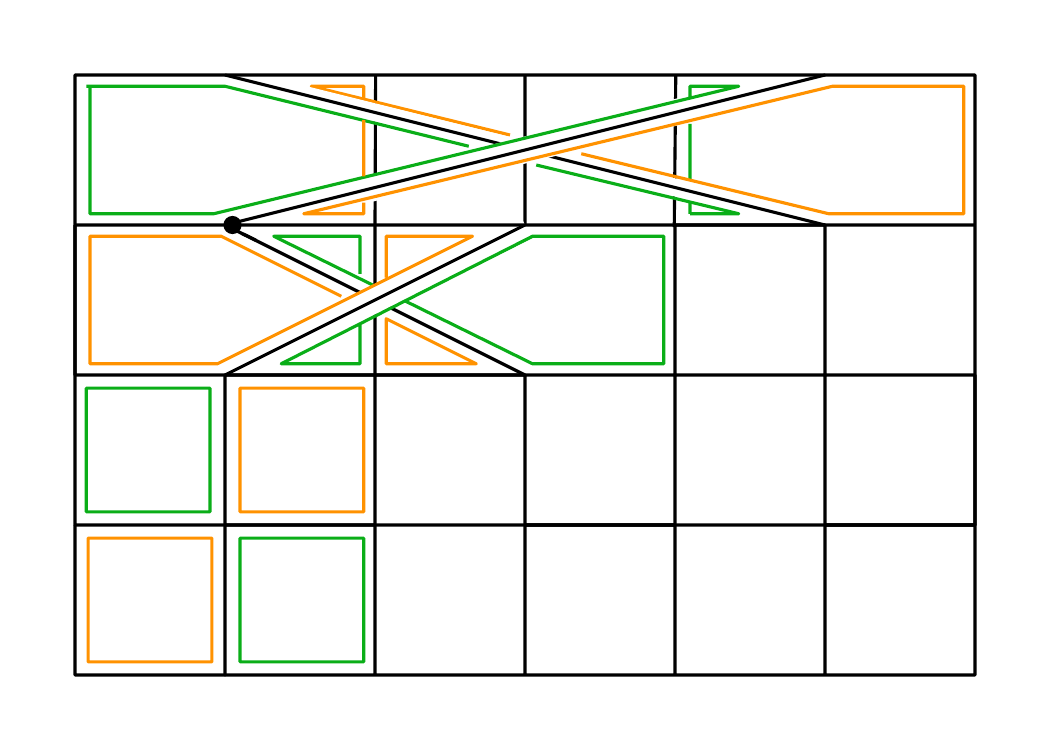}}}
\caption{An admissible fatgraph $X_1$ -- one identifies the top and bottom as well as the right and left sides by translation.  (For clarity, only some of the boundary components of the associated surface are shown).  The highlighted vertex is the unique vertex that is not adjacent to any ``quadrilateral'' boundary component (a boundary component following exactly 4 edges), thus invariant under all automorphisms.  
To produce a family of examples, take $X_n$ to be the result of replacing the 2 bottom rows of squares with 2n rows.}
\label{fig:special_graph}
\end{figure}

\begin{construction}  \label{const:gluing_for_flip}
Let $X_1, X_2, \ldots X_n$ be admissible fatgraphs constructed as in figure \ref{fig:special_graph}, with pairwise non-homeomorphic surfaces $\Sigma_1, \ldots, \Sigma_n$.  Thus each $X_i$ has a vertex $v_i$ that is preserved by all fatgraph automorphisms.  Let $P_i$ be the Seifert pieces with model flows $\lambda_i$ associated to $X_i$ as in \cite{BarbFen_pA_toroidal} and let $o_i$ denote the orbit associated to $v_i$. 

For each $i$, choose a boundary torus $T_i$ of $P_i$ that intersects the stable annulus of $o_i$.  The flows are all incoming on these tori.  Also, choose a boundary torus $T'_i$ of $P_i$ that intersect the unstable annulus of $o_i$, these are outgoing for the flows.   Glue $T_i$ to $T'_{i+1}$ (with indices taken modulo $n$). 
Glue the remaining boundary tori of the $P_i$ together in incoming/outgoing pairs, chosen so that, aside from the already specified $T_i$ and $T'_j$, no torus intersecting an unstable or stable annulus of any $o_i$ is glued to any other such torus. One has a great deal of flexibility in the gluing, the main property we need of the resulting manifold $M$ is that the $P_i$ are indeed Seifert pieces  of the JSJ decomposition (so one must not glue the fiber direction in one piece to that in another), and that the manifold is orientable, which will be important in the proof.  

Provided the constant $\lambda$ in the definition of the model flows has been chosen large enough (as in Lemma \ref{lem_partial_gluing_periodic_piece} above), 
with such a gluing, one easily verifies that the transitivity condition of \cite[Proposition 1.6]{BBY} is satisfied, so we obtain a totally periodic, transitive, Anosov flow $\phi$ on the resulting graph manifold $M$.   
\end{construction}

Now, to prove Corollary \ref{cor_sharpupperbound}, it suffices to prove the following.  
\begin{proposition} 
Let $M, \phi$ be obtained as in Construction \ref{const:gluing_for_flip}.  Suppose that $\psi_i$ is obtained from $\phi$ by a Seifert flip along $P_i$.   Then $\phi$ and $\psi_i$ are not orbit equivalent, and for any $i \neq j$, $\psi_i$ and $\psi_j$ are not orbit equivalent.  
\end{proposition}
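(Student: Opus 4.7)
The plan is to leverage Theorem 1.3 of \cite{BFM} to reduce orbit equivalence to an algebraic constraint on a self-diffeomorphism of $M$, and then use the specific structure of Construction \ref{const:gluing_for_flip} to rule out such a diffeomorphism.

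First, I would show that any orbit equivalence $\Theta\colon M\to M$ between $\phi$ and $\psi_i$ imposes on $\theta=\Theta_\ast$ the constraint that $\theta(g_i)$ is conjugate in $\pi_1(M)$ to $g_i^{-1}$, while $\theta(g_j)$ is conjugate to $g_j$ for every $j\neq i$. The argument is in the spirit of Proposition \ref{prop:not_isotopy_eq}: the flow $\Theta\circ\phi\circ\Theta^{-1}$ is isotopy equivalent to $\psi_i$, and by Theorem 1.3 of \cite{BFM} these two flows must share signs on every tree of scalloped regions. A direct calculation, tracing through how the natural map between $\orb_\phi$ and $\orb_{\Theta\phi\Theta^{-1}}$ interacts with the $\theta$-twisted equivariant homeomorphism coming from $\Theta$, shows that the sign of $(\phi, \Theta\circ\phi\circ\Theta^{-1})$ on the tree $T_k$ is negative precisely when $\theta(g_k)$ is conjugate to $g_k^{-1}$; this must agree with the sign for $(\phi,\psi_i)$ given by Proposition \ref{prop:flips_change} by the cocycle property of signs.

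Next, since the base surfaces $\Sigma_k$ are pairwise non-homeomorphic, $\Theta$ must preserve each Seifert piece $P_k$ setwise, and because $g_k$ generates the center of $\pi_1(P_k)$, the conjugacy conditions above become literal equalities $\theta|_{\pi_1(P_i)}(g_i)=g_i^{-1}$ and $\theta|_{\pi_1(P_j)}(g_j)=g_j$ for $j\neq i$ (after a compatible choice of basepoint). On any boundary torus $T$ of $P_i$ adjacent to a piece $P_k$ with $k\neq i$, the induced automorphism $\theta|_T$ of $\pi_1(T)=\bZ^2$ must then send $g_i$ to $-g_i$ and $g_k$ to $g_k$; as a $\mathbb{Q}$-linear reflection determined by these two eigenvectors it has determinant $-1$. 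This immediately rules out orientation-preserving $\Theta$, since such a $\Theta$ preserves the orientation of each JSJ cutting torus (it preserves both sides of $T$ and the orientation of $M$), forcing $\det \theta|_T = +1$.

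The hard part will be to rule out orientation-reversing $\Theta$. Here one exploits the requirement that $\theta|_T$ must lie in $GL(2,\bZ)$: the reflection is uniquely pinned down over $\mathbb{Q}$ by its two eigenvectors, and integer entries then impose a divisibility condition on the intersection index $\Delta=|\det(g_i,g_k)|$ of the primitive classes $g_i$ and $g_k$ in $\pi_1(T)$. Using the flexibility of Construction \ref{const:gluing_for_flip}, one arranges that for each $i$ at least one boundary torus of $P_i$ has $g_i$ and the adjacent fiber in an intersection pattern obstructing $\theta|_T$ from having integer entries (for instance $\Delta=3$ with neither coordinate of $g_k$ in the $(g_i,b_i)$-basis divisible by $3$). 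With such gluings, no $\theta\in\mathrm{Out}(\pi_1(M))$ with the required behavior can exist, hence neither can $\Theta$. The non-equivalence of $\psi_i$ and $\psi_j$ for $i\neq j$ is handled identically: the same cocycle argument forces $\theta(g_i)\sim g_i^{-1}$, $\theta(g_j)\sim g_j^{-1}$, and $\theta(g_k)\sim g_k$ for $k\neq i,j$, and the analogous gluing obstruction on a boundary torus of $P_i$ adjacent to some $P_k$ with $k\neq i,j$ rules out the corresponding $\theta|_T$.
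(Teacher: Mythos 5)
Your approach is genuinely different from the paper's, and unfortunately it has a gap in the orientation-reversing case.

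The paper's proof is purely topological. After observing that any orbit equivalence $f$ must preserve each piece $P_k$ (since the $\Sigma_k$ are pairwise non-homeomorphic), it uses the automorphism-invariant vertices $v_k$ of the fatgraphs $X_k$: by Theorem~D of \cite{BF_totally_per}, $f$ induces an automorphism of each $X_k$, hence fixes $v_k$ and the associated orbit $o_k$, hence preserves (the isotopy classes of) the distinguished cutting tori $T_k$ and $T_k'$. From there, the paper does orientation bookkeeping on the local frame at each $o_k$ given by the fiber direction and the two transverse directions along the stable and unstable annuli. Because $f$ preserves both $T_k$ and $T_k'$, it preserves the orientations of both transverse directions, and the flip on $P_i$ then forces $f$ to be orientation-reversing on $P_i$ while orientation-preserving on $P_j$ for $j\neq i$ (or the reverse, if $f$ reverses the flow) --- contradicting orientability of $M$. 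This argument works for every gluing allowed by Construction~\ref{const:gluing_for_flip}, with no extra hypotheses.

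Your orientation-preserving case is fine in spirit (the determinant of the induced reflection on $\pi_1(T)\cong\bZ^2$ is $-1$, contradicting $\det=+1$), although the detour through the sign machinery of \cite{BFM} to deduce $\theta(g_i)\sim g_i^{-1}$, $\theta(g_j)\sim g_j$ is murky and not obviously correct as stated; these conjugacy constraints follow more directly once one knows $\Theta$ fixes the special orbits $o_k$ (which itself needs the fatgraph-automorphism argument from \cite{BF_totally_per}, and which you never invoke). The real problem is the orientation-reversing case: there the reflection on $\pi_1(T)$ has determinant $-1$, which is exactly what an orientation-reversing $\Theta$ produces, so you get no contradiction and must reach for the divisibility obstruction. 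But that obstruction requires you to \emph{further restrict} the gluing matrices beyond what Construction~\ref{const:gluing_for_flip} specifies. The Proposition is stated for every $(M,\phi)$ arising from the Construction --- the Construction only requires the $P_i$ to remain JSJ pieces and $M$ to be orientable --- so "using the flexibility of Construction~\ref{const:gluing_for_flip}, one arranges..." amounts to changing the statement rather than proving it. (This would still suffice for Corollary~\ref{cor_sharpupperbound}, which only asks for existence, but it does not prove the Proposition.) What you are missing is the additional structural input the paper uses: preservation of the orbits $o_k$ and the tori $T_k,T_k'$, which gives orientation information in \emph{both} the stable and unstable transverse directions, enough to derive a contradiction in the orientation-reversing case without any arithmetic conditions on the gluing.
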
 

\begin{proof} 
Suppose first for contradiction that $f\colon M \to M$ was an orbit equivalence between $\psi_i$ and $\phi$.  (The strategy to show $\psi_i$ and $\psi_j$ are inequivalent is essentially the same, and we treat this at the end.) 
Since the surfaces $\Sigma_i$ are pairwise non-homeomorphic, the Seifert pieces $P_i$ are non-homeomorphic, so $f$ preserves each piece.  
Theorem D of \cite{BF_totally_per} (or, more simply, the construction and properites of spines) implies that $f$ induces an automorphism of each of the fat graphs $X_i$, hence preserves the special vertices $v_i$ and their associated orbits $o_i$.  
Thus $f$ preserves the isotopy classes of the tori $T_i$ in $M$, since these are characterized by their intersections with the stable and unstable annuli of the orbits $o_i$.  

By definition of Seifert flip, the direction of $\psi_i$ and $\phi$ differ on $o_i$ but agree on $o_j$ for $j \neq i$. 
Assume as a first case that $f$ is an oriented orbit equivalence, preserving the direction of the flow.  Then $f$ reverses the fiber direction of $P_i$, but preserves that of $P_j$ when $j \neq i$. 
  
Consider any $j \neq i$.  
Since $f$ preserves $T_j$, it preserves the orientation of the stable annulus of $o_j$, and similarly, it preserves $T_j'$ so preserves the orientation of the unstable annulus of $o_j$, and hence is orientation preserving on $P_j$.  
Similarly, since $f$ reverses the direction of the fiber on $P_i$, but preserves $T_i$ and $T_i'$, then it must be orientation reversing on $P_i$.  This is a contradiction since $M$ was assumed orientable.  

The case where $f$ reverses the direction of the flow is handled similarly; here $f$ must reverse the orientation of $P_j$ for $j \neq i$ but preserve that of $P_i$.

Finally, in order to show $\psi_i$ and $\psi_j$ are not orbit equivalent, one runs exactly the same proof, and shows that any candidate $f$ for an orbit equivalence between $\psi_i$ and $\psi_j$ would have to preserve orientation on $P_k$ for $k \neq i, j$ and reverse that on $P_i$ and $P_j$, or vice versa, depending on whether $f$ preserved or changed the direction of the flow. 
\end{proof}

\subsection{Orbit equivalent flips}
By contrast with the previous section, our next example shows that some nontrivial Seifert flips {\em do} produce orbit equivalent flows.

\begin{example}  \label{ex:seif_flip_flow}
Consider a 2-holed torus fatgraph as in Figure \ref{fig:flip_symmetry}.  This has a nontrivial symmetry that preserves orientation, preserves the two boundary components and exchanges the two vertices.   

\begin{figure}[h]
   \labellist 
    \small\hair 2pt
 \pinlabel A at -5 75
  \pinlabel A at 182 75
   \pinlabel B at 48 30
  \pinlabel C at 48 120
   \pinlabel C at 135 30
  \pinlabel B at  135 120
 \endlabellist
\centerline{ \mbox{
\includegraphics[width=11cm]{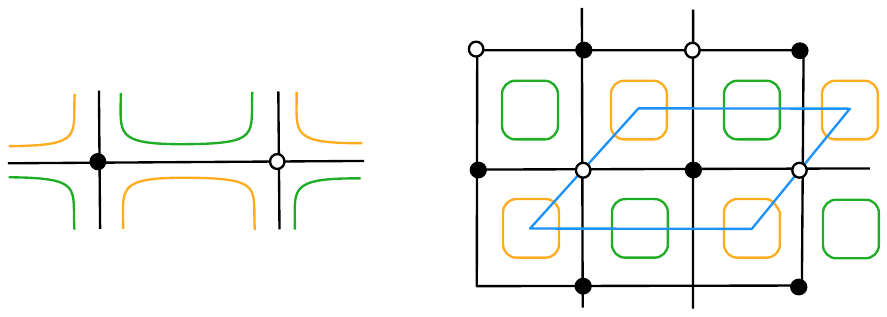}}}
\caption{A fatgraph with two vertices and four edges (with indicated gluings), seen also as a quotient of a fatgraph in $\bR^2$}
\label{fig:flip_symmetry}
\end{figure} 

One way to describe this concretely is as follows:  Let $\hat{\Sigma}$ denote $\bR^2$ with a small $\epsilon$-ball or square removed about every point of the lattice $\bZ^2$, and let $\hat{X} \subset \hat{\Sigma}$ be the fatgraph with vertices at the half-integer points $\bZ^2 + (1/2, 1/2)$ in $\hat{X}$, and horizontal and vertical edges between adjacent points.  Then the graph $X$ of Figure \ref{fig:flip_symmetry} is the quotient of $\hat{X}$ by the lattice $\Gamma$ of translations generated by $(2,0)$ and $(1,1)$, and the ambient surface $\Sigma$ is the quotient of $\hat{\Sigma}$.  We note in particular that the action of $\Gamma$ on $\hat{X}$ has two distinct orbits of vertices, hence the two vertices in the quotient.  

The linear map $(x, y) \mapsto (-y,x)$ on $\R^2$ has order 4, preserves the lattice $\Gamma$ and graph $\hat{X}$, exchanges the two $\Gamma$-orbits of vertices of $\hat{X}$, and preserves the $\Gamma$-orbits of boundary components of $\hat{X}$.  Thus, it descends to an order 4 automorphism of $X \subset \Sigma$ preserving boundary and exchanging the two vertices.

 Let $\sigma$ denote this symmetry of the associated Seifert fibered space $\Sigma \times S^1$, trivial on the $S^1$ fibers.  By construction, $\sigma$ is an orbit equivalence between the model flow $\phi^+$ on $\Sigma \times S^1$ and its flip $\phi^-$.

Now we consider $M$ the manifold obtained by gluing two copies of $\Sigma \times S^1$ along their boundary tori. 
We can obtain two flows on $M$, called $\phi_1$ and $\phi_2$ by gluing either two copies of $\phi^+$ or one copy of $\phi^+$ and one of $\phi^-$. By definition, $\phi_1$ is a periodic Seifert flip of $\phi_2$. 
Since $\sigma$ is isotopic to identity on the boundaries, one can construct a homeomorphism $\bar\sigma$ that realizes $\sigma$ on one copy of $\Sigma \times S^1$ and the identity on the other. Then the conjugate of $\phi_1$ by $\bar\sigma$ is also a flip of $\phi_1$. So by uniqueness of flips (Theorem \ref{thm:flip_unique}) $\phi_2$ is isotopically equivalent to the conjugate of $\phi_1$ by $\bar\sigma$, i.e., $\phi_1$ and $\phi_2$ are orbit equivalent.
\end{example}

\bibliographystyle{amsalpha}
\bibliography{scalloped_trees}

\end{document}